\def\DateTime{26/June/2012, 9:00 (Kyoto)}
\def\Version{Version $2.1$}
\def\yes{\if00}
\def\no{\if01}
\def\iftenpt{\no}
\def\ifelevenpt{\no}
\def\iftwelvept{\yes}

\def\ifusepdf{\no}
\def\ifpsfont{\no}
\def\ifpxfont{\yes}
\def\ifquery{\no}



\iftenpt
\documentclass[leqno]{amsart}
\else\fi
\ifelevenpt
\documentclass[leqno,11pt]{amsart}
\else\fi
\iftwelvept
\documentclass[leqno,12pt]{amsart}
\else\fi

\usepackage{amssymb}
\usepackage{amscd}
\usepackage{latexsym}
\usepackage{verbatim}
\usepackage{mathrsfs}
\usepackage[all]{xy}

\ifusepdf
\usepackage{hyperref}
\else\fi
\ifpsfont
\usepackage[T1]{fontenc}
\usepackage{times}
\else\fi
\ifpxfont
\usepackage{pxfonts}
\else\fi




\ifelevenpt
\setlength{\topmargin}{10pt}
\setlength{\oddsidemargin}{32pt}
\setlength{\evensidemargin}{32pt}
\setlength{\textheight}{637pt}
\setlength{\textwidth}{395pt}
\else\fi

\iftwelvept
\setlength{\topmargin}{0pt}
\setlength{\oddsidemargin}{18pt}
\setlength{\evensidemargin}{18pt}
\setlength{\textheight}{637pt}
\setlength{\textwidth}{427pt}
\else\fi


\theoremstyle{plain}
\newtheorem{Theorem}{Theorem}[section]
\newtheorem{Proposition}[Theorem]{Proposition}
\newtheorem{Lemma}[Theorem]{Lemma}

\newtheorem{Corollary}[Theorem]{Corollary}

\newtheorem{Claim}{Claim}[Theorem]

\theoremstyle{definition}

\newtheorem{Remark}[Theorem]{Remark}
\newtheorem{Example}[Theorem]{Example}


\def\rom{\textup}
\newcommand{\ZZ}{{\mathbb{Z}}}
\newcommand{\QQ}{{\mathbb{Q}}}
\newcommand{\RR}{{\mathbb{R}}}
\newcommand{\KK}{{\mathbb{K}}}
\newcommand{\CC}{{\mathbb{C}}}
\newcommand{\PP}{{\mathbb{P}}}

\newcommand{\OO}{{\mathscr{O}}}

\newcommand{\Proj}{\operatorname{Proj}}

\newcommand{\Rat}{\operatorname{Rat}}

\newcommand{\Spec}{\operatorname{Spec}}

\newcommand{\Supp}{\operatorname{Supp}}

\newcommand{\mult}{\operatorname{mult}}

\newcommand{\adeg}{\widehat{\operatorname{deg}}}

\newcommand{\ord}{\operatorname{ord}}
\newcommand{\Div}{\operatorname{Div}}
\newcommand{\aDiv}{\widehat{\operatorname{Div}}}

\newcommand{\avol}{\widehat{\operatorname{vol}}}
\newcommand{\acvol}{\widehat{\operatorname{vol}}_{\chi}}
\newcommand{\aH}{\hat{H}^0}

\newcommand{\vol}{\operatorname{vol}}
\newcommand{\aew}{({\rm a.e.})}

\newcommand{\rest}[2]{\left.{#1}\right\vert_{{#2}}}
\ifquery
\def\query#1{\setlength\marginparwidth{65pt} 
\marginpar{\raggedright\fontsize{7.81}{9} 
\selectfont\upshape\hrule\smallskip 
#1\par\smallskip\hrule}} 

\else
\def\query#1{}
\fi

\begin{document}

\title[Numerical characterization of nef arithmetic divisors]%
{Numerical characterization of nef arithmetic divisors \\
on arithmetic surfaces}
\author{Atsushi Moriwaki}
\address{Department of Mathematics, Faculty of Science,
Kyoto University, Kyoto, 606-8502, Japan}
\email{moriwaki@math.kyoto-u.ac.jp}
\date{\DateTime, (\Version)}
\subjclass[2010]{Primary 14G40; Secondary 11G50, 37P30}
\begin{abstract}
In this paper, we give a numerical characterization of 
nef arithmetic $\RR$-Cartier divisors of $C^0$-type on an arithmetic surface.
Namely an arithmetic $\RR$-Cartier divisor $\overline{D}$ of $C^0$-type is nef if and only if
$\overline{D}$ is pseudo-effective and $\adeg(\overline{D}^2) = \avol(\overline{D})$.
\end{abstract}


\maketitle



\section*{Introduction}

Let $X$ be a generically smooth, normal and projective arithmetic surface and
let $X \to \Spec(O_K)$ be the Stein factorization of $X \to \Spec(\ZZ)$,
where $K$ is a number field and $O_K$ is the ring of integers in $K$.
Let $\overline{L}$ be an arithmetic divisor of $C^{\infty}$-type on $X$
with $\deg(L_K) = 0$ (cf. Conventions and terminology~\ref{CT:arith:div}).
Faltings-Hriljac's Hodge index theorem (\cite{FaCAS}, \cite{Hri}) says that
\[
\adeg(\overline{L}^2) \leq 0
\]
and the equality holds if and only if
$\overline{L} = \widehat{(\phi)} + (0, \eta)$ for
some $F_{\infty}$-invariant locally constant real valued function $\eta$ on $X(\CC)$
and $\phi \in \Rat(X)^{\times}_{\QQ} := \Rat(X)^{\times} \otimes_{\ZZ} \QQ$.
The inequality part of their Hodge index theorem can be generalized as follows:
Let $\overline{D}$ be an integrable arithmetic $\RR$-Cartier divisor of
$C^0$-type on $X$, that is, $\overline{D} = \overline{P} - \overline{Q}$ for some
nef arithmetic $\RR$-Cartier divisors $\overline{P}$ and $\overline{Q}$ of $C^0$-type.
(cf. Conventions and terminology~\ref{CT:arith:div} and \ref{CT:Div:Nef}).
If $\deg(D_K) \geq 0$, 
then
\[
\adeg (\overline{D}^2) \leq \avol(\overline{D})
\]
(cf. \cite[Theorem~6.2]{MoCont}, \cite[Theorem~6.6.1]{MoArZariski}, Theorem~\ref{thm:equal:cond:gen:Hodge}). 
This inequality is called the {\em generalized Hodge index theorem}.
It is very interesting to ask the equality condition of the inequality.
It is known that if $\overline{D}$ is nef, then 
$\adeg (\overline{D}^2) = \avol(\overline{D})$
(cf.  \cite[Corollary~5.5]{MoCont},
\cite[Proposition-Definition~6.4.1]{MoArZariski}), so that the problem is the converse.
In the case where $\deg(D_K) = 0$ (and hence $\avol(\overline{D})=0$), it is nothing more than the equality condition of
the Hodge index theorem (cf. Lemma~\ref{lem:nef:deg:zero}).
Thus the following theorem gives an answer to
the above question.

\begin{Theorem}[cf. Theorem~\ref{thm:equal:cond:gen:Hodge}]
\label{thm:GHIT:equal:cond}
We assume that 
$\deg(D_K) > 0$.
Then $\overline{D}$ is nef if and only if
$\adeg(\overline{D}^2) = \avol(\overline{D})$.
\end{Theorem}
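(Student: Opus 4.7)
The ``only if'' direction is the content of \cite[Corollary~5.5]{MoCont} and \cite[Proposition-Definition~6.4.1]{MoArZariski}, already recalled in the introduction; I therefore concentrate on the converse.

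My plan is to invoke the arithmetic Zariski decomposition on arithmetic surfaces constructed in \cite{MoArZariski}. Since $\avol(\overline{D}) = \adeg(\overline{D}^{2}) \ge 0$ and $\deg(D_{K}) > 0$, a short perturbation argument with $\overline{D} + \varepsilon\overline{A}$ (for an ample $\overline{A}$), combined with the generalised Hodge index inequality \cite[Theorem~6.6.1]{MoArZariski}, shows that $\overline{D}$ is pseudo-effective. This licences a decomposition
\[
\overline{D} = \overline{P} + \overline{N},
\]
where $\overline{P}$ is a nef arithmetic $\RR$-Cartier divisor of $C^{0}$-type and $\overline{N}$ is the effective negative part.

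From this decomposition I would extract three standard properties: (a) $\avol(\overline{D}) = \adeg(\overline{P}^{2})$; (b) the orthogonality $\adeg(\overline{P}\cdot\overline{N}) = 0$; and (c) $\adeg(\overline{N}^{2}) \le 0$, with equality forcing $\overline{N} = 0$. Granting (a)--(c), one computes
\[
\adeg(\overline{D}^{2}) = \adeg(\overline{P}^{2}) + 2\adeg(\overline{P}\cdot\overline{N}) + \adeg(\overline{N}^{2}) = \avol(\overline{D}) + \adeg(\overline{N}^{2}),
\]
so the hypothesis $\adeg(\overline{D}^{2}) = \avol(\overline{D})$ immediately yields $\adeg(\overline{N}^{2}) = 0$; by (c) this gives $\overline{N} = 0$, and hence $\overline{D} = \overline{P}$ is nef.

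The substantive difficulty is property (c). The negative part splits into a vertical component (a linear combination of irreducible components of closed fibres) and an Archimedean component (the gap between the Green function of $\overline{D}$ and its $C^{0}$-psh upper envelope on each Archimedean fibre). Negative semidefiniteness of the vertical part is the classical Zariski lemma applied fibrewise. The Archimedean piece is the genuinely new analytic input: one must show that the arithmetic self-intersection of the non-positive continuous Green-function difference is non-positive, and that vanishing of the self-intersection, together with orthogonality to $\overline{P}$, forces this Green-function difference to vanish identically. I would approach this via smooth-approximation of the envelope on $X(\CC)$ and Stokes' theorem, and, for the boundary case, by reducing to the Faltings--Hriljac equality condition recorded in Lemma~\ref{lem:nef:deg:zero}.
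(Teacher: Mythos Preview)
Your scheme founders precisely at the equality case of~(c). You assert that the negative part $\overline{N}$ of the \emph{absolute} Zariski decomposition ``splits into a vertical component \dots\ and an Archimedean component (the gap between the Green function of $\overline{D}$ and its $C^{0}$-psh upper envelope)''. That description is correct for the \emph{relative} Zariski decomposition of Theorem~\ref{thm:relative:Zariski:decomp}, whose negative part is vertical by construction and whose Green function is literally the psh envelope; but it is not an a~priori property of the greatest element of $\Upsilon(\overline{D})$. The horizontal part of $N$ can be nonzero, and once $\deg(N_K)>0$ neither Zariski's lemma nor Lemma~\ref{lem:nef:deg:zero} applies to $\overline{N}$, so your proposed proof of ``$\adeg(\overline{N}^2)=0\Rightarrow\overline{N}=0$'' collapses. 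In fact the paper establishes this implication only \emph{after} the theorem (Theorem~\ref{thm:strict:negative:negative:part}), using Corollary~\ref{cor:characterization:nef:ar:div} as input, so you cannot invoke it here without circularity.

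What is actually needed---and what your outline is missing---is an independent reason why, under the hypothesis $\adeg(\overline{D}^2)=\avol(\overline{D})$, the horizontal multiplicities of $N$ vanish. The paper supplies this in two steps. First, the hypothesis together with $\adeg(\overline{D}^2)\le\acvol(\overline{D})\le\avol(\overline{D})$ forces $\acvol(\overline{D})=\avol(\overline{D})$; by Theorem~\ref{thm:cvol:self:ineq} this already yields relative nefness. Second, and crucially, the equality $\avol=\acvol$ is fed into the Boucksom--Chen integral formulae for Okounkov bodies (Section~3): Corollary~\ref{cor:vol:cvol:mu:zero} then gives $\mu_{\QQ,\xi}(\overline{D})=0$ for every $\xi\in X_K$, and since $\mult_{\xi}(N)=\mu_{\RR,\xi}(\overline{D})$ by \cite[Claim~9.3.5.1]{MoArZariski}, $N$ is vertical. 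Relative nefness plus verticality of $N$ (hence $\adeg(\rest{\overline{D}}{C})=\adeg(\rest{\overline{P}}{C})\ge 0$ on horizontal curves) gives nefness in the big case; the non-big case is handled by the perturbation $\overline{D}+(0,\epsilon)$ and Lemma~\ref{lem:vol:formula:epsilon}. The Okounkov-body step is the essential idea you are missing.
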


For the proof of the above theorem, we need the integral formulae of the arithmetic volumes due to
Boucksom-Chen \cite{BC} and the existence of the Zariski decomposition of big arithmetic divisors \cite{MoArZariski}.
From the point of view of a characterization of nef arithmetic $\RR$-Cartier divisors,
the following variant of the above theorem is also significant.

\begin{Corollary}[cf. Corollary~\ref{cor:characterization:nef:ar:div}]
$\overline{D}$ is nef if and only if $\overline{D}$ is pseudo-effective
and $\adeg(\overline{D}^2) = \avol(\overline{D})$.
\end{Corollary}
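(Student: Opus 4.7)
The plan is to deduce the corollary from Theorem~\ref{thm:GHIT:equal:cond} by splitting cases according to the value of $\deg(D_K)$, with the $\deg(D_K)=0$ case handled by the Hodge-index equality.

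The forward direction is already in the literature. If $\overline{D}$ is nef it is pseudo-effective by definition, and the identity $\adeg(\overline{D}^2) = \avol(\overline{D})$ for nef $\overline{D}$ is \cite[Corollary~5.5]{MoCont} (equivalently \cite[Proposition-Definition~6.4.1]{MoArZariski}).

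For the converse, suppose $\overline{D}$ is pseudo-effective and $\adeg(\overline{D}^2)=\avol(\overline{D})$. The first step is to observe that pseudo-effectiveness forces $\deg(D_K)\geq 0$ on the generic fiber: otherwise, for any small ample $\overline{A}$ one would have $\deg(D_K+A_K)<0$, so no tensor power of $\overline{D}+\overline{A}$ admits global sections and $\avol(\overline{D}+\overline{A})=0$, contradicting pseudo-effectiveness. So two cases remain. If $\deg(D_K)>0$, Theorem~\ref{thm:GHIT:equal:cond} applies verbatim and gives the nefness of $\overline{D}$. If $\deg(D_K)=0$, then $\avol(\overline{D})=0$, and the hypothesis reads $\adeg(\overline{D}^2)=0$; here I invoke Lemma~\ref{lem:nef:deg:zero}, which in the $\RR$-Cartier, $C^0$ setting gives the Hodge-index equality case, representing $\overline{D}$ as a principal arithmetic divisor plus a contribution concentrated in locally constant $F_\infty$-invariant Green data. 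The pseudo-effectiveness hypothesis then forces this Green contribution to be non-negative, whence $\overline{D}$ is nef.

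The only non-trivial step is the case $\deg(D_K)=0$, where Theorem~\ref{thm:GHIT:equal:cond} does not directly apply. One has to use the fine structure of the Hodge-index equality together with pseudo-effectiveness to rule out negative Archimedean-fiber contributions; this is exactly what Lemma~\ref{lem:nef:deg:zero} delivers, so no new ingredient is required beyond results already established in the paper.
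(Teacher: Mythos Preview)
Your proposal is correct and follows essentially the same route as the paper: reduce the converse direction to the dichotomy $\deg(D_K)>0$ (handled by Theorem~\ref{thm:GHIT:equal:cond}) versus $\deg(D_K)=0$ (handled by Lemma~\ref{lem:nef:deg:zero}(2)), after noting that pseudo-effectivity forces $\deg(D_K)\geq 0$. One minor quibble: ``nef $\Rightarrow$ pseudo-effective'' is not literally \emph{by definition} but a standard consequence; and since Lemma~\ref{lem:nef:deg:zero} assumes $X$ regular, you should tacitly pass to a desingularization first (as the paper does in the proof of Theorem~\ref{thm:equal:cond:gen:Hodge}), but this is harmless since nefness, pseudo-effectivity, $\adeg(\overline{D}^2)$ and $\avol(\overline{D})$ are all preserved under such a pullback.
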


Let $\Upsilon(\overline{D})$ be the set of all
arithmetic $\RR$-Cartier divisors $\overline{M}$ of $C^0$-type on $X$ such that $\overline{M}$ is nef and
$\overline{M} \leq \overline{D}$.
As an application of the above theorem, we have the following numerical
characterization of the greatest element of $\Upsilon(\overline{D})$.

\begin{Corollary}[cf. Corollary~\ref{cor:characterization:Zariski:decomp}]
We assume that $X$ is regular. 
Let $\overline{P}$ be an arithmetic $\RR$-Cartier divisor of $C^0$-type on $X$.
Then the following are equivalent:
\begin{enumerate}
\renewcommand{\labelenumi}{(\arabic{enumi})}
\item
$\overline{P}$ is the greatest element of $\Upsilon(\overline{D})$, that is,
$\overline{P} \in \Upsilon(\overline{D})$ and $\overline{M} \leq \overline{P}$ 
for all $\overline{M} \in \Upsilon(\overline{D})$.

\item
$\overline{P}$ is an element of $\Upsilon(\overline{D})$ with the following property:
\[
\adeg(\overline{P} \cdot \overline{B}) = 0\quad\text{and}\quad
\adeg(\overline{B}^2) < 0
\]
for all integrable arithmetic $\RR$-Cartier divisors $\overline{B}$ of
$C^0$-type with $(0,0) \lneqq \overline{B} \leq \overline{D} - \overline{P}$
\rom{(}cf. Conventions and terminology~\rom{\ref{CT:Div:Nef}}\rom{)}.
\end{enumerate}
\end{Corollary}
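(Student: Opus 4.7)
My plan is to establish the two implications separately, relying on Corollary~\ref{cor:characterization:nef:ar:div} (nef is equivalent to pseudo-effective with $\adeg(\overline{D}^2)=\avol(\overline{D})$) together with the arithmetic Zariski decomposition on regular arithmetic surfaces from \cite{MoArZariski}, which supplies both the existence of the greatest element $\overline{P}_{\max}\in\Upsilon(\overline{D})$ and the identity $\avol(\overline{D})=\adeg(\overline{P}_{\max}^2)$. For the direction $(1)\Rightarrow(2)$, I fix an integrable $\overline{B}$ with $(0,0)\lneqq\overline{B}\leq\overline{D}-\overline{P}$ and consider the perturbation $\overline{P}+t\overline{B}$ for $t\in(0,1]$. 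Each such divisor is integrable, pseudo-effective (nef plus effective), has non-negative generic degree, and lies between $\overline{P}$ and $\overline{D}$. Maximality of $\overline{P}$ rules out nefness of $\overline{P}+t\overline{B}$ for $t>0$ (otherwise one would have $\overline{P}+t\overline{B}\leq\overline{P}$, forcing $\overline{B}=(0,0)$), so Corollary~\ref{cor:characterization:nef:ar:div} together with the generalized Hodge index inequality gives $\adeg((\overline{P}+t\overline{B})^2)<\avol(\overline{P}+t\overline{B})$.

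Next, monotonicity of $\avol$ sandwiches $\avol(\overline{P})\leq\avol(\overline{P}+t\overline{B})\leq\avol(\overline{D})$, and because $\avol(\overline{D})=\adeg(\overline{P}^2)=\avol(\overline{P})$ I conclude $\avol(\overline{P}+t\overline{B})=\adeg(\overline{P}^2)$. Expanding $(\overline{P}+t\overline{B})^2$ and cancelling the $\adeg(\overline{P}^2)$ term then yields
\[
2t\,\adeg(\overline{P}\cdot\overline{B})+t^2\adeg(\overline{B}^2)<0\qquad\text{for all }t\in(0,1].
\]
Dividing by $t$ and letting $t\to 0^+$ forces $\adeg(\overline{P}\cdot\overline{B})\leq 0$; combined with $\adeg(\overline{P}\cdot\overline{B})\geq 0$ (nef paired with effective) this yields the first equality in $(2)$, and the residual inequality then forces $\adeg(\overline{B}^2)<0$.

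For the direction $(2)\Rightarrow(1)$, I use $\overline{P}_{\max}$ directly. Since $\overline{P}$ is nef with $\overline{P}\leq\overline{D}$, maximality gives $\overline{P}\leq\overline{P}_{\max}$, so $\overline{B}:=\overline{P}_{\max}-\overline{P}$ is integrable and satisfies $(0,0)\leq\overline{B}\leq\overline{D}-\overline{P}$. Supposing $\overline{B}\neq(0,0)$, hypothesis $(2)$ gives $\adeg(\overline{P}\cdot\overline{B})=0$ and $\adeg(\overline{B}^2)<0$, whence
\[
\adeg(\overline{P}_{\max}\cdot\overline{B})=\adeg(\overline{P}\cdot\overline{B})+\adeg(\overline{B}^2)=\adeg(\overline{B}^2)<0,
\]
contradicting nefness of $\overline{P}_{\max}$ paired against the effective divisor $\overline{B}$. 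Hence $\overline{B}=(0,0)$, i.e., $\overline{P}=\overline{P}_{\max}$.

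I expect the main delicacy to lie in certifying the arithmetic Zariski decomposition apparatus in the required generality, namely the existence of $\overline{P}_{\max}$ and the identity $\avol(\overline{D})=\adeg(\overline{P}_{\max}^2)$ without assuming $\overline{D}$ big, together with monotonicity of $\avol$ for $\RR$-Cartier $C^0$-type divisors. In the degenerate case $\avol(\overline{D})=0$ the squeeze still forces $\avol(\overline{P}+t\overline{B})=0$, so the quadratic-in-$t$ analysis of $(1)\Rightarrow(2)$ survives, and the remaining ingredients should reduce to results already in \cite{MoArZariski} combined with Corollary~\ref{cor:characterization:nef:ar:div}.
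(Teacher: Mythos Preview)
Your argument is correct. For $(1)\Rightarrow(2)$ it is essentially the paper's proof: the paper packages the quadratic-in-$t$ trick into Lemma~\ref{lem:perpendicular:positive:negative} and Theorem~\ref{thm:strict:negative:negative:part}, but those results are proved by exactly the computation you wrote down, so you have simply inlined them. The identity $\avol(\overline{D})=\avol(\overline{P})$ you invoke is Proposition~\ref{prop:small:sec:D:P} of this paper (a one-line consequence of the fact that $-\widehat{(\phi)}_{\RR}\in\Upsilon(\overline{D})$ whenever $\overline{D}+\widehat{(\phi)}_{\RR}\geq(0,0)$), so your worry about needing bigness is unfounded.

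For $(2)\Rightarrow(1)$ your route genuinely differs from the paper's. You invoke the \emph{existence} of the greatest element $\overline{P}_{\max}$ (\cite[Theorem~9.2.1]{MoArZariski}) and then compare $\overline{P}$ to it via $\overline{B}=\overline{P}_{\max}-\overline{P}$. The paper instead argues directly: given any $\overline{M}\in\Upsilon(\overline{D})$, it forms $\overline{A}:=\max\{\overline{P},\overline{M}\}$, observes that $\overline{A}$ is nef by \cite[Lemma~9.1.2]{MoArZariski} and $\overline{A}\leq\overline{D}$, sets $\overline{B}:=\overline{A}-\overline{P}$, and derives the same contradiction from $\adeg(\overline{A}\cdot\overline{B})\geq 0$. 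The paper's approach is more elementary in that it relies only on the $\max$ lemma rather than on the full existence theorem for the Zariski decomposition; your approach is shorter once that existence theorem is granted. Both are valid.
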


Finally I would like to thank Prof. Yuan and Prof. Zhang for their helpful comments.

\renewcommand{\thesubsubsection}{\arabic{subsubsection}}
\renewcommand{\theequation}{CT.\arabic{subsubsection}.\arabic{Claim}}
\subsection*{Conventions and terminology}
Here we fix several conventions and the terminology of this paper.
An {\em arithmetic variety} means a quasi-projective and flat integral scheme over $\ZZ$.
It is said to be
{\em generically smooth} if the generic fiber over $\ZZ$ is smooth over $\QQ$.
Throughout this paper,
$X$ is a $(d+1)$-dimensional,
generically smooth, normal and projective arithmetic variety.
Let $X \to \Spec(O_K)$ be the Stein factorization of $X \to \Spec(\ZZ)$,
where $K$ is a number field and $O_K$ is the ring of integers in $K$.
For details of the following \ref{CT:arith:div} and \ref{CT:positivity:arithmetic:divisors},
see \cite{MoArZariski} and \cite{MoD}.

\subsubsection{}
\label{CT:normed:module}
A pair $(M, \Vert\cdot\Vert)$ is called a {\em normed $\ZZ$-module} if
$M$ is a finitely generated $\ZZ$-module and
$\Vert\cdot\Vert$ is a norm of $M_{\RR} := M \otimes_{\ZZ} \RR$.
A quantity
\[
\log \left(\frac{\vol\left(\{ x \in M_{\RR} \mid \Vert x \Vert \leq 1 \}\right)}
{\vol(M_{\RR}/(M/M_{tor}))} \right) + \log \#(M_{tor})
\]
does not depend on the choice of the Haar measure $\vol$ 
on $M_{\RR}$, where $M_{tor}$ is the group of torsion elements of $M$.
We denote the above quantity by $\hat{\chi}(M, \Vert\cdot\Vert)$.

\subsubsection{}
\label{CT:arith:div}
Let $\KK$ be either $\QQ$ or $\RR$.
Let $\Div(X)$ be the group of Cartier divisors on $X$ and let
$\Div(X)_{\KK} := \Div(X) \otimes_{\ZZ} \KK$, whose element is
called a {\em $\KK$-Cartier divisor on $X$}.
For $D \in \Div(X)_{\RR}$,
we define $H^0(X, D)$ and $H^0(X_K, D_K)$ to be
\[
\begin{cases}
H^0(X, D) = \left\{ \phi \in \Rat(X)^{\times} \mid D + (\phi) \geq 0 \right\} \cup \{ 0 \},\\
H^0(X_K, D_K) = \left\{ \phi \in \Rat(X_K)^{\times} \mid \text{$D_K + (\phi)_K \geq 0$ on $X_K$} \right\} \cup \{ 0 \},
\end{cases}
\]
where $X_K$ is the generic fiber of $X \to \Spec(O_K)$.

A pair $\overline{D} = (D, g)$ 
is called an {\em arithmetic $\KK$-Cartier divisor
of $C^{\infty}$-type} (resp. {\em of $C^{0}$-type}) if
the following conditions are satisfied:
\begin{enumerate}
\renewcommand{\labelenumi}{(\alph{enumi})}
\item
$D$ is a $\KK$-Cartier divisor on $X$, that is,
$D = \sum_{i=1}^r a_i D_i$  for some $D_1, \ldots, D_r \in \Div(X)$ and
$a_1, \ldots, a_r \in \KK$.

\item
$g : X(\CC) \to \RR \cup \{\pm\infty\}$ is a locally integrable function and
$g \circ F_{\infty} = g \ (a.e.)$, where $F_{\infty} : X(\CC) \to X(\CC)$
is the complex conjugation map.

\item
For any point $x \in X(\CC)$, there exist an open neighborhood $U_{x}$ of $x$ and
a $C^{\infty}$-function (resp. continuous function) 
$u_x$ on $U_x$ such that
\[
g = u_x + \sum_{i=1}^r (-a_i) \log \vert f_i \vert^2\quad(a.e.)
\]
on $U_x$,
where $f_i$ is a local equation of $D_i$ over $U_x$ for each $i$.
\end{enumerate}
The function $g$ is called a {\em $D$-Green function of $C^{\infty}$-type}
(resp. {\em of $C^0$-type}).
Note that $dd^c([u_x])$ does not depend on the choice of local equations $f_1, \ldots, f_r$,
so that $dd^c([u_x])$ is defined globally on $X(\CC)$. It is called the {\em first Chern current of $\overline{D}$}
and is denoted by $c_1(\overline{D})$, that is,
$c_1(\overline{D}) = dd^c([g]) + \delta_D$.
Note that, if $\overline{D}$ is of $C^{\infty}$-type, then
$c_1(\overline{D})$ is represented by a $C^{\infty}$-form,
which is called the {\em first Chern form of $\overline{D}$}.
Let $\mathcal{C}$ be either $C^{\infty}$ or $C^0$.
The set of all arithmetic $\KK$-Cartier divisors of $\mathcal{C}$-type is denoted by
$\aDiv_{\mathcal{C}}(X)_{\KK}$. Moreover, the group
\[
\left\{ (D, g) \in \aDiv_{\mathcal{C}}(X)_{\QQ}
\mid D \in \Div(X) \right\}
\]
is denoted by
$\aDiv_{\mathcal{C}}(X)$. An element of $\aDiv_{\mathcal{C}}(X)$ is called
an {\em arithmetic Cartier divisor
of $\mathcal{C}$-type}.
For $\overline{D} = (D, g), \overline{E} = (E, h) \in \aDiv_{C^0}(X)_{\KK}$,
we define relations
$\overline{D} = \overline{E}$ and $\overline{D} \geq \overline{E}$ as follows:
\begin{align*}
\overline{D} = \overline{E} & \quad\overset{\text{def}}{\Longleftrightarrow}\quad D = E, \ \  g = h \ (a.e.), \\
\overline{D} \geq \overline{E} & \quad\overset{\text{def}}{\Longleftrightarrow}\quad D \geq E, \ \  g \geq h \ (a.e.).
\end{align*}
Let $\Rat(X)^{\times}_{\KK} := \Rat(X)^{\times} \otimes_{\ZZ} \KK$, and
let 
\[
(\ )_{\KK} : \Rat(X)^{\times}_{\KK} \to \Div(X)_{\KK}\quad\text{and}\quad
\widehat{(\ )}_{\KK} : \Rat(X)^{\times}_{\KK} \to \aDiv_{C^{\infty}}(X)_{\KK}
\]
be the natural extensions of the homomorphisms 
\[
\Rat(X)^{\times} \to \Div(X)\quad\text{and}\quad
\Rat(X)^{\times} \to \aDiv_{C^{\infty}}(X)
\]
given by
$\phi \mapsto (\phi)$ and 
$\phi \mapsto \widehat{(\phi)}$ respectively.
Let $\overline{D}$ be an arithmetic $\RR$-Cartier divisor of $C^0$-type.
We define $\widehat{\Gamma}^{\times}(X, \overline{D})$ and $\widehat{\Gamma}^{\times}_{\KK}(X, \overline{D})$
to be
\[
\begin{cases}
\widehat{\Gamma}^{\times}(X, \overline{D}) := \left\{ \phi \in \Rat(X)^{\times} \mid
\overline{D} + \widehat{(\phi)} \geq (0,0) \right\}, \\
\widehat{\Gamma}^{\times}_{\KK}(X, \overline{D}) := \left\{ \phi \in \Rat(X)^{\times}_{\KK} \mid
\overline{D} + \widehat{(\phi)}_{\KK} \geq (0,0) \right\}.
\end{cases}
\]
Note that $\widehat{\Gamma}^{\times}_{\QQ}(X, \overline{D}) = \bigcup_{n=1}^{\infty} \widehat{\Gamma}^{\times}(X, n\overline{D})^{1/n}$.
Moreover, we set
\[
\aH(X, \overline{D}) := \widehat{\Gamma}^{\times}(X, \overline{D}) \cup \{ 0 \}
\quad\text{and}\quad
\aH_{\KK}(X, \overline{D}) := \widehat{\Gamma}^{\times}_{\KK}(X, \overline{D})
\cup \{ 0 \}.
\]
For $\xi \in X$,
we define the {\em $\KK$-asymptotic multiplicity of $\overline{D}$ at $\xi$} to be
\[
\mu_{\KK,\xi}(\overline{D}) :=
\begin{cases}
\inf \left\{ \mult_{\xi}(D + (\phi)_{\KK}) \mid \phi \in \widehat{\Gamma}_{\KK}^{\times}(X, \overline{D}) \right\} &
\text{if $\widehat{\Gamma}_{\KK}^{\times}(X, \overline{D}) \not= \emptyset$}, \\
\infty & \text{otherwise},
\end{cases}
\]
(for details, see \cite[Proposition~6.5.2, Proposition~6.5.3]{MoArZariski} and
\cite[Section~2]{MoD}).

\subsubsection{}
\label{CT:volume}
Let $\overline{D} = (D, g)$ be an arithmetic $\RR$-Cartier divisor 
of $C^0$-type on $X$.
Let $\phi \in H^0(X(\CC), D_{\CC})$, that is,
$\phi \in \Rat(X(\CC))^{\times}$ and $(\phi) + D_{\CC} \geq 0$ on $X(\CC)$.
Then
$\vert \phi \vert \exp(-g/2)$ is represented by a continuous function $\vert \phi \vert_g^c$
on $X(\CC)$ (cf. \cite[SubSection~2.5]{MoArZariski}), so that we may consider
$\sup \{ \vert \phi \vert_g^c(x) \mid x \in X(\CC) \}$. We denote it
by $\Vert \phi \Vert_{\overline{D}}$ or $\Vert \phi \Vert_{g}$.
Note that, for $\phi \in H^0(X, D)$,
$\phi \in \aH(X, \overline{D})$ if and only if $\Vert \phi \Vert_{\overline{D}} \leq 1$.
We define $\avol(\overline{D})$ and $\acvol(\overline{D})$ to be
\[
\avol(\overline{D}) := \limsup_{m \to\infty}
\frac{ \log \# \aH(X, m\overline{D}) }{m^{d+1}/(d+1)!},\quad
\acvol(\overline{D}) := \limsup_{m\to\infty}
\frac{ \hat{\chi}(H^0(X, mD), \Vert\cdot\Vert_{m\overline{D}})}{m^{d+1}/(d+1)!}.
\]
It is well known that $\avol(\overline{D}) \geq \acvol(\overline{D})$.
More generally,
for $\xi_1, \ldots, \xi_l \in X$ and $\mu_1, \ldots, \mu_l \in \RR_{\geq 0}$,
we define $\avol(\overline{D}; \mu_1 \xi_1, \ldots, \mu_l \xi_l)$ to be
\begin{multline*}
\avol(\overline{D}; \mu_1 \xi_1, \ldots, \mu_l \xi_l) :=  \\
\limsup_{m \to\infty}
\frac{\log \# \left(  \left\{ \phi \in \widehat{\Gamma}^{\times}(X, m\overline{D})
 \mid \mult_{\xi_i}(mD + (\phi)) \geq \mu_i\ (\forall i) \right\}\cup \{ 0 \}\right)}{m^{d+1}/(d+1)!}.
\end{multline*}
Note that
$\avol(\overline{D}; \mu\xi) = \avol(\overline{D})$ for $0 \leq \mu \leq \mu_{\QQ, \xi}(\overline{D})$.

\subsubsection{}
\label{CT:positivity:arithmetic:divisors}
Let $\overline{D}$ be an arithmetic $\RR$-Cartier divisor of $C^0$-type on $X$.
The effectivity, bigness, pseudo-effectivity and nefness of $\overline{D}$ are defined as follows:
\begin{enumerate}
\item[$\bullet$]
$\overline{D}$ is effective
$\quad\overset{\text{def}}{\Longleftrightarrow}\quad$
$\overline{D} \geq (0, 0)$.

\item[$\bullet$] $\overline{D}$ is big $\quad\overset{\text{def}}{\Longleftrightarrow}\quad$ $\avol(\overline{D}) > 0$.

\item[$\bullet$] $\overline{D}$ is pseudo-effective $\quad\overset{\text{def}}{\Longleftrightarrow}\quad$ $\overline{D}+\overline{A}$
is big for any big arithmetic $\RR$-Cartier divisor $\overline{A}$ of $C^0$-type.

\item[$\bullet$] $\overline{D} = (D,g)$ is nef $\quad\overset{\text{def}}{\Longleftrightarrow}$ 
\par
\begin{enumerate}
\renewcommand{\labelenumii}{(\alph{enumii})}
\item
$\adeg(\rest{\overline{D}}{C}) \geq 0$ for all reduced and irreducible $1$-dimensional closed subschemes $C$ of $X$.

\item
$c_1(\overline{D})$ is a positive current.
\end{enumerate}
\end{enumerate}
A decomposition 
$\overline{D} = \overline{P} + \overline{N}$ is called a {\em Zariski decomposition of $\overline{D}$}
if the following properties are satisfied:
\begin{enumerate}
\renewcommand{\labelenumi}{(\arabic{enumi})}
\item
$\overline{P}$ and $\overline{N}$ are arithmetic $\RR$-Cartier divisors of $C^0$-type on $X$.

\item
$\overline{P}$ is nef and $\overline{N}$ is effective.

\item
$\avol(\overline{P}) = \avol(\overline{D})$.
\end{enumerate}
We set
\[
\Upsilon(\overline{D}) := 
\left\{ \overline{M} \ \left| \ \begin{array}{l}
\text{$\overline{M}$ is an
arithmetic $\RR$-Cartier divisor of $C^0$-type} \\
\text{such that $\overline{M}$ is nef and $\overline{M} \leq \overline{D}$}
\end{array}
\right\}\right..
\]
If $\overline{P}$ is the greatest element of $\Upsilon(\overline{D})$ (i.e.
$\overline{P} \in \Upsilon(\overline{D})$ and $\overline{M} \leq \overline{P}$ 
for all $\overline{M} \in \Upsilon(\overline{D})$) and $\overline{N} = \overline{D} - \overline{P}$,
then $\overline{D} = \overline{P} + \overline{N}$ is a Zariski decomposition of $\overline{D}$
(cf. Proposition~\ref{prop:small:sec:D:P}).

\subsubsection{}
\label{CT:Div:Nef}
Let $\overline{D}$ be an arithmetic $\RR$-Cartier divisor of $C^0$-type on $X$.
According to \cite{ZhSmall},
we say $\overline{D}$ is {\em integrable} if there are
nef arithmetic $\RR$-Cartier divisors $\overline{P}$ and $\overline{Q}$ of $C^0$-type
such that $\overline{D} = \overline{P} - \overline{Q}$.
Note that if either $\overline{D}$ is of $C^{\infty}$-type, or $c_1(\overline{D})$ is a positive current,
then $\overline{D}$ is integrable (cf. \cite[Proposition~6.4.2]{MoArZariski}).
Moreover, for integrable arithmetic $\RR$-Cartier divisors $\overline{D}_0, \ldots, \overline{D}_d$ of $C^0$-type on $X$,
the arithmetic intersection number
$\adeg(\overline{D}_0 \cdots \overline{D}_d)$ is defined in the natural way
(cf. \cite[SubSection~6.4]{MoArZariski}, \cite[SubSection~2.1]{MoD}).
Note that if $\overline{D} = \overline{P} + \overline{N}$ is a Zariski decomposition and
$\overline{D}$ is integrable,
then $\overline{N}$ is also integrable.

\subsubsection{}
\label{CT:max:arith:div}
We assume that $X$ is regular and $d=1$.
Let $D_1, \ldots, D_k$ be $\RR$-Cartier divisors on $X$.
We set $D_i = \sum_{C} a_{i,C} C$ for each $i$,
where $C$ runs over all reduced and irreducible $1$-dimensional closed subschemes on $X$. We define $\max \{ D_1, \ldots, D_k \}$ to be
\[
\max \{ D_1, \ldots, D_k  \} := \sum_C \max \{ a_{1, C}, \ldots, a_{k, C} \} C.
\]
Let $\overline{D}_1 =(D_1, g_1), \ldots, \overline{D}_k = (D_k, g_k)$ be arithmetic $\RR$-Cartier divisors of $C^0$-type
on $X$. Then $\max \{ \overline{D}_1, \ldots, \overline{D}_k \}$ is defined to be
\[
\max \{ \overline{D}_1, \ldots, \overline{D}_k \} := \left(  \max \{ D_1, \ldots, D_k \}, \max \{ g_1, \ldots, g_k \} \right).
\]
Note that $\max \{ \overline{D}_1, \ldots, \overline{D}_k \}$ is also an
arithmetic $\RR$-Cartier divisor of $C^0$-type (cf. \cite[Lemma~9.1.2]{MoArZariski}).

\renewcommand{\theTheorem}{\arabic{section}.\arabic{Theorem}}
\renewcommand{\theClaim}{\arabic{section}.\arabic{Theorem}.\arabic{Claim}}
\renewcommand{\theequation}{\arabic{section}.\arabic{Theorem}.\arabic{Claim}}

\section{Relative Zariski decomposition of arithmetic divisors}
\label{sec:relative:Zariski:decomp}
We assume that $X$ is regular and $d = 1$.
The Stein factorization $X \to \Spec(O_K)$ of $X \to \Spec(\ZZ)$ is denoted by $\pi$.
Let $\overline{D} = (D, g)$ be an arithmetic $\RR$-Cartier divisor of $C^0$-type on $X$.
We say $\overline{D}$ is {\em relatively nef} if 
$c_1(\overline{D})$ is a positive current and $\adeg(\rest{\overline{D}}{C}) \geq 0$ for all
vertical reduced and irreducible $1$-dimensional closed subschemes $C$ on $X$.
We set
\[
\Upsilon_{rel}(\overline{D}) := \left\{ \overline{M} \ \left| \ \begin{array}{l}
\text{$\overline{M}$ is an arithmetic $\RR$-Cartier divisor of $C^0$-type} \\
\text{such that $\overline{M}$ is relatively nef and
$\overline{M} \leq \overline{D}$}
\end{array}
\right\}\right..
\]

\begin{Theorem}[Relative Zariski decomposition]
\label{thm:relative:Zariski:decomp}
If $\deg(D_K) \geq 0$, then
there is the greatest element $\overline{Q}$ of $\Upsilon_{rel}(\overline{D})$,
that is, $\overline{Q} \in \Upsilon_{rel}(\overline{D})$ and $\overline{M} \leq \overline{Q}$ for all
$\overline{M} \in \Upsilon_{rel}(\overline{D})$. 
Moreover, if we set $\overline{N} := \overline{D} - \overline{Q}$,
then $\overline{Q}$ and $\overline{N}$ satisfy the following properties:
\begin{enumerate}
\renewcommand{\labelenumi}{(\alph{enumi})}
\item
$N$ is vertical.

\item
$\adeg(\overline{Q} \cdot \overline{N}) = 0$.

\item
For any $P \in \Spec(O_K)$, $\pi^{-1}(P)_{red} \not\subseteq \Supp(N)$.

\item
The natural homomorphism $H^0(X, nQ) \to H^0(X, nD)$ is bijective and 
$\Vert\cdot\Vert_{n\overline{D}} = \Vert\cdot\Vert_{n\overline{Q}}$ for each $n \geq 0$.

\item
$\acvol(\overline{Q}) = \acvol(\overline{D})$.
\end{enumerate}
\end{Theorem}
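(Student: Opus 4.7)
I construct $\overline{Q}$ by combining fiberwise Zariski decomposition at each finite place with an envelope construction at the archimedean place. For each closed point $P \in \Spec(O_K)$, write $\pi^{-1}(P) = \sum_i m_{P,i} C_{P,i}$. By Zariski's lemma the intersection form on $\RR$-divisors supported on the $C_{P,i}$ is negative semidefinite with kernel spanned by $[\pi^{-1}(P)]$, and the hypothesis $\deg(D_K) \ge 0$ forces $D \cdot \pi^{-1}(P) \ge 0$. A standard convex argument produces a unique minimal effective vertical $\RR$-divisor $N_P$ supported in $\pi^{-1}(P)$ with $(D - N_P) \cdot C_{P,i} \ge 0$ for all $i$, orthogonality $(D - N_P) \cdot C_{P,i} = 0$ for $C_{P,i} \in \Supp(N_P)$, and $\Supp(N_P) \subsetneq \pi^{-1}(P)_{red}$ (since a shift $N_P - \epsilon \cdot \pi^{-1}(P)$ lies in the kernel of the intersection form and would contradict minimality otherwise). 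Only finitely many $P$ contribute. Set $N := \sum_P N_P$ and $Q := D - N$, so $Q_\CC = D_\CC$. At the archimedean place, define
\[
g_Q := \Bigl( \sup \bigl\{ h : h \le g \text{ a.e.},\ (Q, h) \text{ of } C^0\text{-type},\ c_1((Q, h)) \ge 0 \bigr\} \Bigr)^{*}
\]
(upper semicontinuous regularization). Classical envelope theory on the compact Riemann surface $X(\CC)$ ensures $g_Q$ is continuous and satisfies $c_1((Q, g_Q)) \ge 0$. Then $\overline{Q} := (Q, g_Q) \in \Upsilon_{rel}(\overline{D})$ by construction.

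\textbf{Maximality.} Given $\overline{M} = (M, g_M) \in \Upsilon_{rel}(\overline{D})$, I must show $\overline{M} \le \overline{Q}$. For the divisor part, on each fiber set $\Delta_P := (D - M)_{\mathrm{vert},P} - N_P$ and write $\Delta_P = \Delta_P^{+} - \Delta_P^{-}$ with $\Delta_P^{\pm} \ge 0$ of disjoint support. Using $M \cdot C_{P,j} \ge 0$, $(D_{\mathrm{horiz}} - M_{\mathrm{horiz}}) \cdot C_{P,j} \ge 0$, and the orthogonality $(D - N_P) \cdot C_{P,j} = 0$ for $C_{P,j} \in \Supp(N_P)$, one computes $\Delta_P \cdot C_{P,j} \le 0$ for $C_{P,j} \in \Supp(N_P) \supseteq \Supp(\Delta_P^{-})$. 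This forces $\Delta_P^{-} \cdot \Delta_P^{-} \ge 0$, while negative-definiteness of the intersection form on the proper subset $\Supp(N_P) \subsetneq \pi^{-1}(P)_{red}$ gives $\Delta_P^{-} \cdot \Delta_P^{-} \le 0$, with equality iff $\Delta_P^{-} = 0$. Hence $\Delta_P \ge 0$ and $M \le Q$. For the Green function, a local analysis near each horizontal prime $H$ converts $g_M$ via suitable $\log|f_H|^{2}$ adjustments into a $Q$-Green function $\widetilde{g}_M \ge g_M$ with $c_1((Q, \widetilde{g}_M)) \ge 0$ and $\widetilde{g}_M \le g$; this $\widetilde{g}_M$ is a competitor in the envelope defining $g_Q$, so $g_M \le g_Q$.

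\textbf{Properties (a)--(e).} (a) and (c) are built into the construction of $N$. For (b), the orthogonality $(D - N_P) \cdot N_P = 0$ from the fiberwise decomposition combined with the triviality of $\overline{N}$ at the archimedean place gives $\adeg(\overline{Q} \cdot \overline{N}) = 0$. For (d), given $\phi \in H^0(X, nD)$, set $E := nD + (\phi)$ (effective); applying the $\Delta_P$-argument above to $E_P^{v} - nN_P$ (using $(\phi) \cdot C_{P,j} = 0$ for vertical $C_{P,j}$ and orthogonality) yields $E_P^{v} \ge nN_P$, so $\phi \in H^0(X, nQ)$. The sup-norm equality $\Vert \cdot \Vert_{n\overline{D}} = \Vert \cdot \Vert_{n\overline{Q}}$ reflects the contact-set property of the envelope: $g_Q = g$ at every maximizer of $|\phi|^{2}\exp(-n g_Q)$. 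Finally (e) is immediate from (d) since the lattices and norms coincide. The main obstacle is the archimedean analysis: continuity of $g_Q$, the Green-function inequality in the maximality step, and the norm equality in (d) all hinge on delicate potential theory on the Riemann surface $X(\CC)$.
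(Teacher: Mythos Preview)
Your overall architecture is sound and close to the paper's: decompose into a finite-place Zariski decomposition plus an archimedean envelope, then combine. The fiberwise construction of $N_P$ and the divisor-part maximality via the $\Delta_P^{\pm}$ argument are correct (the paper instead takes a supremum and uses that $\max\{\overline{Q},\overline{M}\}$ stays relatively nef, which is slicker but equivalent). Your Green-function maximality step is sketchy but salvageable: rather than an ad hoc ``$\log|f_H|^2$ adjustment'', one takes $\max\{g_M, q_0 - C\}$ for any fixed $D$-Green function $q_0$ with positive current and $C\gg 0$, or---as the paper does---simply $\max\{\overline{Q},\overline{M}\}$ and invokes the max lemma.

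There is, however, a genuine gap in your proof of (b). You write that $\overline{N}$ is ``trivial at the archimedean place'', but this is false: $\overline{N} = \overline{D} - \overline{Q} = (N,\, g - g_Q)$, and the continuous function $k := g - g_Q \ge 0$ is typically nonzero. The intersection number splits as
\[
\adeg(\overline{Q}\cdot\overline{N}) \;=\; \adeg(\overline{Q}\cdot(N,0)) \;+\; \tfrac{1}{2}\int_{X(\CC)} (g - g_Q)\, c_1(\overline{Q}),
\]
and while the first term vanishes by your fiberwise orthogonality, the second requires the nontrivial fact that the Monge--Amp\`ere measure $c_1(\overline{Q})$ of the envelope is supported on the contact set $\{g_Q = g\}$. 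This is exactly Lemma~1.3(2) in the paper, whose proof in the $C^\infty$ case invokes the Berman--Demailly regularity theorem for plurisubharmonic envelopes and then passes to $C^0$ by approximation. Without this input, (b) does not follow, and since (b) is what later yields $\acvol(\overline{D}) - \adeg(\overline{D}^2) = -\adeg(\overline{N}^2)$ in Theorem~2.1, the gap propagates. You should also verify $F_\infty$-invariance of $g_Q$, which the paper handles by showing $F_\infty^*(q)$ is again a competitor in the envelope.
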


Before staring the proof of Theorem~\ref{thm:relative:Zariski:decomp}, we need several preparations.
Let $D$ be an $\RR$-Cartier divisor on $X$. 
We say $D$ is {\em $\pi$-nef} if $\deg(\rest{D}{C}) \geq 0$ for all  
vertical reduced and irreducible $1$-dimensional closed subschemes $C$ on $X$.
First let us consider the relative Zariski decomposition on finite places.

\begin{Lemma}
\label{lem:vertical:Zariski:decomp}
Let $D$ be an $\RR$-Cartier divisor on $X$ and let
$\Sigma(D)$ be the set of all $\RR$-Cartier divisors $M$ on $X$ such that
$M$ is $\pi$-nef and $M \leq D$.
If $\deg(D_K) \geq 0$,
then there is the greatest element $Q$ of $\Sigma(D)$, that is,
$Q \in \Sigma(D)$ and $M \leq Q$ for all $M \in \Sigma(D)$.
Moreover, if we set $N := D - Q$, then $Q$ and $N$ satisfy the following properties:
\begin{enumerate}
\renewcommand{\labelenumi}{(\alph{enumi})}
\item
$N$ is vertical.

\item
$\deg(\rest{Q}{C}) = 0$ for all reduced and irreducible $1$-dimensional closed subschemes $C$ in $\Supp(N)$.

\item
For any $P \in \Spec(O_K)$, $\pi^{-1}(P)_{red} \not\subseteq \Supp(N)$.

\item
The natural homomorphism $H^0(X, nQ) \to H^0(X, nD)$ is bijective for each $n \geq 0$.
\end{enumerate}
\end{Lemma}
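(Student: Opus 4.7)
The strategy is to reduce to a fiber-by-fiber finite-dimensional problem and then invoke the classical Zariski decomposition on surfaces, using that the intersection form on the components of each special fiber is negative semi-definite with kernel spanned by the whole fiber (Zariski's lemma).

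First I would reduce to the case where $D - M$ is vertical. Given any $M \in \Sigma(D)$, decompose $D - M$ into its horizontal and vertical parts; both are effective since $D - M$ is. Set $M' := D - (D - M)_v = M + (D - M)_h$. Then $M'$ is still $\pi$-nef, because adding an effective horizontal divisor preserves non-negativity of the pairing with any irreducible vertical curve, and $M \leq M' \leq D$ with $D - M'$ vertical. Hence it suffices to construct a greatest element $Q$ among those $M \in \Sigma(D)$ with $D - M$ vertical.

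The problem then decouples over $P \in \Spec(O_K)$. Fix $P$, write $\pi^{-1}(P) = \sum_i m_i C_i$, and consider
\[
T_P := \bigl\{ (x_i) \in \RR_{\geq 0}^n : (D - \textstyle\sum_i x_i C_i) \cdot C_j \geq 0 \text{ for all } j \bigr\}.
\]
The hypothesis $\deg(D_K) \geq 0$ forces $D \cdot \pi^{-1}(P) \geq 0$, so not every component can have $D \cdot C_i < 0$; on any proper subset $I$ of the components, the principal submatrix $(C_i \cdot C_j)_{i,j \in I}$ is negative definite with non-negative off-diagonal entries, hence its negative is of Stieltjes type with entrywise non-negative inverse. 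Solving the corresponding linear system shows $T_P$ is non-empty, and a short partition-of-indices argument using $C_i \cdot C_j \geq 0$ for $i \neq j$ shows $T_P$ is closed under componentwise minimum. So $T_P$ has a unique minimal element $N_P = \sum_i b_i C_i$; subtracting a positive multiple of $\pi^{-1}(P)$ from $N_P$ would give a strictly smaller element of $T_P$ (as $\pi^{-1}(P)$ has zero intersection with each $C_j$), so minimality forces $\min_i b_i/m_i = 0$, giving (c); complementary slackness between $N_P$ and the sign of $(D - N_P) \cdot C_j$ gives (b). Setting $Q := D - \sum_P N_P$, maximality follows because for any competitor $M$ with $D - M$ vertical, the restriction of $D - M$ to $F_P$ lies in $T_P$ and therefore componentwise dominates $N_P$, giving $M \leq Q$.

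The delicate step is property (d). Only the inclusion $H^0(X, nD) \subseteq H^0(X, nQ)$ needs proof: take $\phi \in H^0(X, nD)$, set $E := nD + (\phi) \geq 0$, and aim for $E \geq nN$. Any principal divisor pairs to zero with a proper vertical curve, so combined with property (b) one gets $(E - nN) \cdot C_j = n Q \cdot C_j = 0$ for every $j$ with $C_j \subseteq \Supp(N_P)$. Decomposing $E - nN$ on $F_P$ into an effective horizontal part, an effective vertical part off $\Supp(N_P)$, and a term $R_P = \sum_{i \in I_P} r_i C_i$ supported on $I_P := \{ i : C_i \subseteq \Supp(N_P) \}$, the first two contribute non-negatively to each $(E - nN) \cdot C_j$ for $j \in I_P$, so $\sum_i r_i (C_i \cdot C_j) \leq 0$ holds for all $j \in I_P$. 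By (c) the submatrix $(C_i \cdot C_j)_{i,j \in I_P}$ is negative definite with non-negative off-diagonal entries; its negative is Stieltjes with entrywise non-negative inverse, so inverting forces $r_i \geq 0$. This gives $E \geq nN$, i.e.\ $\phi \in H^0(X, nQ)$, completing (d).
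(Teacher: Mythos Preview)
Your plan is essentially correct but follows a genuinely different route from the paper. The paper constructs $Q$ by a lattice argument: it shows that $\Sigma(D)$ is closed under the operation $\max$, defines $q_C := \sup\{\mult_C(M) : M \in \Sigma(D)\}$ for each curve $C$, and checks via approximating sequences that $Q := \sum_C q_C C$ lies in $\Sigma(D)$. Properties (a)--(c) then fall out as one-line contradictions (if any failed, one could enlarge $Q$ inside $\Sigma(D)$). Your approach instead localizes to each closed fiber and runs the classical Zariski--Fujita linear algebra on the negative-definite intersection matrix, extracting (b) and (c) from complementary slackness and minimality. Both work; the paper's is softer and coordinate-free, yours is more explicit.

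Two remarks. First, your non-emptiness step for $T_P$ is compressed: solving the system on the index set $I = \{i : D\cdot C_i < 0\}$ gives $x_i \geq 0$ by the Stieltjes property, but the resulting vector need not lie in $T_P$, since components $j \notin I$ may acquire $(D - \sum_{i\in I} x_i C_i)\cdot C_j < 0$; one has to iterate (the set of bad indices stays proper because $\sum_j m_j(D - E)\cdot C_j = \deg(D_K) \geq 0$ for vertical $E$) or else exhibit an element directly as the paper does in its Claim~1. Second, your argument for (d) is correct but far heavier than necessary. The paper observes that if $nD + (\phi) \geq 0$ then $-(1/n)(\phi) \leq D$, and $-(1/n)(\phi)$ is trivially $\pi$-nef because a principal divisor has degree zero on every proper vertical curve; hence $-(1/n)(\phi) \in \Sigma(D)$, so $-(1/n)(\phi) \leq Q$, i.e.\ $nQ + (\phi) \geq 0$. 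This one-line use of maximality replaces your entire Stieltjes inversion, and it is worth internalizing: once $Q$ is characterized as a greatest element, membership of $-(1/n)(\phi)$ in $\Sigma(D)$ is all you need.
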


\begin{proof}
Let us begin with following claim:

\begin{Claim}
\label{claim:lem:vertical:Zariski:decomp:01}
$\Sigma(D) \not= \emptyset$.
\end{Claim}

\begin{proof}
First we assume that $\deg(D_K) = 0$.
Then, by using Zariski's lemma (cf. \cite[Lemma~1.1.4]{MoD}),
we can find a vertical and effective $\RR$-Cartier divisor $E$ such that
$\deg(\rest{(D-E)}{C}) = 0$ for all vertical reduced and irreducible $1$-dimensional closed subschemes $C$ on $X$, and hence
$\Sigma(D) \not= \emptyset$.

Next we assume that $\deg(D_K) > 0$.
Let $A$ be an ample Cartier divisor on $X$.
As $\deg(D_K) > 0$, $H^0(X_K, mD_K - A_K) \not= \{ 0 \}$ for some positive integer $m$, and hence
$H^0(X, mD - A) \not= \{ 0 \}$.
Thus,
there is $\phi \in \Rat(X)^{\times}$ such that
$m D - A + (\phi) \geq 0$, that is, $D \geq (1/m)(A - (\phi))$, as required.
\end{proof}

\begin{Claim}
\label{claim:lem:vertical:Zariski:decomp:02}
If $L_1, \ldots, L_k$ are $\pi$-nef $\RR$-Cartier divisors, then
$\max \{ L_1, \ldots, L_k \}$ is also $\pi$-nef
\rom{(}cf. Conventions and terminology~\rom{\ref{CT:max:arith:div}}\rom{)}.
\end{Claim}

\begin{proof}
We set $L'_i := \max \{ L_1, \ldots, L_k \} - L_i$ for each $i$.
Let $C$ be a vertical reduced and irreducible $1$-dimensional closed subscheme on $X$. Then there is $i$ such that
$C \not\subseteq \Supp(L'_i)$.
As $L'_i$ is effective, we have $\deg(\rest{L'_i}{C}) \geq 0$, so that
\[
\deg(\rest{\max \{ L_1, \ldots, L_k \}}{C}) = \deg(\rest{L_i}{C}) + \deg(\rest{L'_i}{C}) \geq 0.
\]
\end{proof}

For a reduced and irreducible $1$-dimensional closed subscheme $C$ on $X$, we set
\[
q_C := \sup \{ \mult_C(M) \mid M \in \Sigma(D) \},
\]
which exists in $\RR$ because $\mult_C(M) \leq \mult_C(D)$ for all $M \in \Sigma(D)$.
We fix $M_0 \in \Sigma(D)$.

\begin{Claim}
\label{claim:lem:vertical:Zariski:decomp:03}
There is a sequence $\{ M_n \}_{n=1}^{\infty}$ of $\RR$-Cartier divisors in $\Sigma(D)$ such that
$M_0 \leq M_n$ for all $n \geq 1$ and $\lim_{n\to\infty} \mult_{C}(M_n) = q_C$ for all reduced and
irreducible $1$-dimensional closed subschemes $C$ in $\Supp(D) \cup \Supp(M_0)$.
\end{Claim}

\begin{proof}
For each reduced and irreducible $1$-dimensional closed subscheme $C$ in $\Supp(D) \cup \Supp(M_0)$, 
there is a sequence $\{ M_{C, n} \}_{n=1}^{\infty}$ in $\Sigma(D)$ such that
\[
\lim_{n\to\infty} \mult_{C}(M_{C,n}) = q_{C}.
\]
If we set 
\[
M_n = \max \left( \{ M_{C, n} \}_{C \subseteq
\Supp(D) \cup \Supp(M_0)} \cup \{ M_0 \} \right),
\]
then
$M_0 \leq M_n$ and $M_n \in \Sigma(D)$ by Claim~\ref{claim:lem:vertical:Zariski:decomp:02}.
Moreover, as
\[
\mult_{C}(M_{C, n}) \leq \mult_{C}(M_n) \leq q_{C},
\]
$\lim_{n\to\infty} \mult_{C}(M_n) = q_{C}$.
\end{proof}

Since $\max \{ M_0, M \} \in \Sigma(D)$ for all $M \in \Sigma(D)$ by Claim~\ref{claim:lem:vertical:Zariski:decomp:02},
we have 
\[
\mult_C(M_0) \leq q_C \leq \mult_C(D).
\]
In particular, if $C \not\subseteq \Supp(D) \cup \Supp(M_0)$, then $q_C = 0$, so that we can set
$Q := \sum_C q_C C$.

\begin{Claim}
\label{claim:lem:vertical:Zariski:decomp:04}
$Q$ is the greatest element $Q$ in $\Sigma(D)$, that is, $Q \in \Sigma(D)$ and
$M \leq Q$ for all $M \in \Sigma(D)$.
\end{Claim}

\begin{proof}
By Claim~\ref{claim:lem:vertical:Zariski:decomp:03}, 
we can see that $Q \in \Sigma(D)$, so that the assertion follows.
\end{proof}

We need to check the properties (a) -- (d).

(a) We choose effective $\RR$-Cartier divisors $N_1$ and $N_2$ such that
$N = N_1 + N_2$, $N_1$ is horizontal and $N_2$ is vertical.
If $N_1 \not= 0$, then $Q \lneqq Q + N_1 \leq D$ and $Q + N_1$ is $\pi$-nef,
so that we have $N_1 = 0$, that is, $N$ is vertical.

(b) Let $C$ be a  vertical reduced and irreducible $1$-dimensional closed subscheme in $\Supp(N)$.
If $\deg(\rest{Q}{C}) > 0$, then $Q + \epsilon C$ is $\pi$-nef and $Q + \epsilon C \leq D$
for a sufficiently small $\epsilon > 0$, and
hence $\deg(\rest{Q}{C}) = 0$.

(c) 
We assume the contrary.
Then we can find $\delta > 0$ such that $\delta \pi^{-1}(P) \leq N$, so that
$Q \lneqq Q + \delta \pi^{-1}(P) \leq D$ and $Q + \delta \pi^{-1}(P)$ is $\pi$-nef.
This is a contradiction.

(d) 
It is sufficient to see that if $\phi \in \Gamma^{\times}(X, nD)$, then
$\phi \in \Gamma^{\times}(X, nQ)$.
Since $(-1/n)(\phi) \in \Sigma(D)$, we have $(-1/n)(\phi) \leq Q$, that is,
$nQ + (\phi) \geq 0$. Therefore $\phi \in \Gamma^{\times}(X, nQ)$.
\end{proof}

Moreover, we need the following lemma.

\begin{Lemma}
\label{lem:riemann:surface:norm}
Let $S$ be a connected compact Riemann surface and let $D$ be an $\RR$-divisor on $S$ with $\deg(D) \geq 0$.
Let $g$ be a $D$-Green function of $C^0$-type on $S$ and let $G(D,g)$ be the set of all $D$-Green functions $h$ 
of $C^0$-type on $S$ such that
$c_1(D, h)$ is a positive current and $h \leq g\ \aew$. 
Then there is the greatest element $q$ of $G(D,g)$, that is,
$q \in G(D,g)$ and $h \leq q\ \aew$ for all $h \in G(D,g)$.
Moreover, $q$ has the following property:
\begin{enumerate}
\renewcommand{\labelenumi}{(\arabic{enumi})}
\item
$\Vert \phi \Vert_{ng} = \Vert \phi \Vert_{nq}$ for all $\phi \in H^0(S, nD)$ and $n \geq 0$.

\item
${\displaystyle \int_S (g - q) c_1(D, q) = 0}$.
\end{enumerate}
\end{Lemma}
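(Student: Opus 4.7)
The plan is to construct $q$ as an upper envelope in the Perron style, establish its continuity via obstacle-problem analysis in dimension one, and deduce (1) and (2) directly from the maximality of $q$.

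I would first reduce to the local picture: on a coordinate chart $U\subset S$ where $D=\sum_i a_iD_i$ has local equations $f_i$, a $D$-Green function $h$ of $C^0$-type has the form $h=u_h-\sum_i a_i\log|f_i|^2$ with $u_h$ continuous, and $c_1(D,h)\ge 0$ is equivalent to subharmonicity of $u_h$; the condition $h\le g\ \aew$ becomes the pointwise inequality $u_h\le u_g$. So locally $G(D,g)$ corresponds to the family of continuous subharmonic minorants of the continuous obstacle $u_g$. Non-emptiness uses $\deg(D)\ge0$: there exists a $C^{\infty}$ $D$-Green function $h_0$ with $c_1(D,h_0)$ a smooth positive form of total mass $\deg(D)$ (solve the $dd^c$-equation on the compact surface $S$; take a flat metric when $\deg(D)=0$), and since $h_0-g$ is continuous on $S$, subtracting a large constant puts $h_0-M$ in $G(D,g)$. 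Now $G(D,g)$ is directed under pairwise maxima---locally $\max(u_{h_1},u_{h_2})$ is continuous subharmonic and still $\le u_g$---so Choquet's lemma yields an increasing sequence $h_n\in G(D,g)$ whose pointwise supremum agrees, off a polar set, with the pointwise supremum over all of $G(D,g)$. Define $q$ by taking the upper semi-continuous regularization: locally $u_q(x)=\limsup_{y\to x}\sup_n u_{h_n}(y)$ is subharmonic by the Brelot--Cartan theorem, and $u_q\le u_g$ persists since $u_g$ is continuous. Thus $q$ is an upper semi-continuous $D$-Green function with $c_1(D,q)$ a positive measure and $q\le g\ \aew$, and $q$ dominates every element of $G(D,g)$ a.e.

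The main technical step is to upgrade $q$ from upper semi-continuous to continuous. This is the obstacle problem for the continuous obstacle $u_g$ in one complex dimension. I would argue that the maximality of $q$ forces $u_q$ to be harmonic on the open set $\{u_q<u_g\}$: otherwise $dd^c u_q$ would carry positive mass on a small disk $B\subset\{u_g-u_q>\delta\}$, and replacing $u_q$ on $B$ by its harmonic extension (which dominates $u_q$ on $B$ by the subharmonic maximum principle) produces a continuous subharmonic function still $\le u_g$ on $B$ (for $B$ small enough) and globally subharmonic by the pasting lemma, contradicting the maximality of $q$. Hence $u_q$ is harmonic, and so continuous, on $\{u_q<u_g\}$, coincides with continuous $u_g$ on $\{u_q=u_g\}$, and continuity across the free boundary then follows from the upper semi-continuity of $u_q$ combined with a lower barrier provided by any fixed element of $G(D,g)$. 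This continuity step is the genuine obstacle: the envelope construction only delivers upper semi-continuity, and the upgrade to continuity crucially relies on the one-dimensionality of $S$ and the continuity of $g$.

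The remaining assertions follow easily. For (1), $q\le g$ gives $\Vert\phi\Vert_{nq}\ge\Vert\phi\Vert_{ng}$ at once. For the reverse, set $C:=\Vert\phi\Vert_{ng}$ and consider the singular $D$-Green function $h_\phi:=\tfrac{1}{n}\log(|\phi|^2/C^2)$, which satisfies $h_\phi\le g\ \aew$ and $c_1(D,h_\phi)\ge0$. The function $q':=\max\{q,h_\phi\}$ is upper semi-continuous; on a neighborhood of the (finitely many) zeros of $\phi$ it equals $q$ since $h_\phi=-\infty$ there, while elsewhere it is the max of two continuous functions, so $q'$ is continuous on $S$, satisfies $q'\le g\ \aew$ and $c_1(D,q')\ge0$, and thus lies in $G(D,g)$. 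Maximality forces $q'\le q$, i.e.\ $h_\phi\le q$, which is precisely $\Vert\phi\Vert_{nq}\le C$. For (2), the continuity analysis above already shows that $c_1(D,q)$ is supported on the coincidence set $\{q=g\}$, on which $g-q=0$; therefore $(g-q)\,c_1(D,q)$ vanishes as a measure and the integral in (2) is zero.
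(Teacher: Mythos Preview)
Your argument is essentially correct and, for property~(2), is more direct than the paper's. The paper does not reprove the existence and continuity of $q$; it simply cites \cite[Theorem~1.4]{BD} or \cite[Theorem~4.6]{MoArZariski}. Your Perron construction with the balayage step (harmonicity of $u_q$ on the non-coincidence set) recovers this result by hand. One caution: your free-boundary continuity justification (``a lower barrier provided by any fixed element of $G(D,g)$'') is vague as written---a single $h_0\in G(D,g)$ gives only $\liminf_{z\to z_0}u_q(z)\ge u_{h_0}(z_0)$, which can be strictly smaller than $u_q(z_0)=u_g(z_0)$. A clean way to close this is to mollify: locally $u_\epsilon:=u_q*\rho_\epsilon$ is smooth subharmonic with $u_q\le u_\epsilon\le u_g*\rho_\epsilon\le u_g+\delta(\epsilon)$ (uniform, since $u_g$ is continuous), so $u_\epsilon-\delta(\epsilon)$ yields, after gluing with a fixed $h_0$, an element of $G(D,g)$; maximality then gives $\|u_q-u_\epsilon\|_\infty\le\delta(\epsilon)\to0$, hence continuity.

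For property~(1) your proof is identical to the paper's. For property~(2) the approaches genuinely diverge. The paper first treats the case where $g$ is $C^\infty$ by invoking \cite[Corollary~2.5]{BD}, which gives the finer statement $c_1(D,q)=\mathbf{1}_{\{q=g\}}\,c_1(D,g)$, and then passes to continuous $g$ by a uniform approximation argument (Stone--Weierstrass plus weak convergence of $c_1(D,q_n)$). Your route is shorter: once the balayage step shows $dd^c u_q=0$ on $\{u_q<u_g\}$, the positive measure $c_1(D,q)$ is supported on $\{q=g\}$, where the continuous integrand $g-q$ vanishes, and (2) follows immediately with no approximation. The paper's approach yields slightly more (the explicit density formula in the smooth case), but for the lemma as stated your argument is both sufficient and more economical.
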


\begin{proof}
The existence of $q$ follows from \cite[Theorem~1.4]{BD} or \cite[Theorem~4.6]{MoArZariski}.
We need to check the properties (1) and (2).

(1) Clearly $\Vert \phi \Vert_{nq} \geq \Vert \phi \Vert_{ng}$ because $q \leq g\ \aew$.
Let us consider the converse inequality. We may assume that $\phi \not= 0$.
We set
\[
q' := \max \left\{ q, \frac{1}{n}\log (\vert \phi \vert^2/\Vert \phi \Vert_{ng}^2 ) \right\}.
\]
Since $D \geq (-1/n)(\phi)$ and $(1/n)\log (\vert \phi \vert^2/\Vert \phi \Vert_{ng}^2 )$ is
a $(-1/n)(\phi)$-Green function of $C^{\infty}$-type with the first Chern form zero,
by \cite[Lemma~9.1.1]{MoArZariski}, $q'$ is a $D$-Green function of $C^0$-type such that
$c_1(D, q')$ is a positive current.
Note that $\Vert \phi \Vert_{ng}^2 \geq \vert \phi \vert^2 \exp(-ng)\ \aew$, that is,
\[
g \geq (1/n)\log (\vert \phi \vert^2/\Vert \phi \Vert_{ng}^2 )\ \aew,
\]
and hence $q' \in G(D,g)$. Therefore, as $q' \geq q\ \aew$, we have $q = q'\ \aew$, so that
$q \geq (1/n)\log (\vert \phi \vert^2/\Vert \phi \Vert_{ng}^2 )\ \aew$, that is,
$\Vert \phi \Vert_{ng}^2 \geq \vert \phi \vert^2 \exp(-nq)\ \aew$,
which implies $\Vert \phi \Vert_{ng} \geq \Vert \phi \Vert_{nq}$.

(2) If $\deg(D) = 0$, then the assertion is obvious because $c_1(D, q) = 0$, so that we assume that $\deg(D) > 0$.
First we consider the case where $g$ is of $C^{\infty}$-type. 
We set $\alpha := c_1(D, g)$ and
\[
\varphi := \sup \left\{ \psi \mid \text{$\psi$ is an $\alpha$-plurisubharmonic function on $S$ and $\psi \leq 0$} \right\}
\]
(cf. \cite{BD}).
Then, by \cite[Proposition~4.3]{MoArZariski}, $q = g + \varphi\ \aew$. In particular, $\varphi$ is continuous because $g$ and $q$ are of $C^{0}$-type.
If we set 
$D = \{ x \in S \mid \varphi(x) = 0 \}$,
then,
by \cite[Corollary~2.5]{BD}, $c_1(D, q) = \bold{1}_D \alpha$, where $\bold{1}_D$ is the indicator function of $D$.
Thus
\[
\int_S (g - q) c_1(D, q) = 0.
\]

Next  we  consider a general case.
Let $g'$ be a $D$-Green function of $C^{\infty}$-type.
We set $g = g' + u\ \aew$ for some continuous function $u$ on $S$.
By using the Stone-Weierstrass theorem, we can find a sequence $\{ u_n \}$ of $C^{\infty}$-functions on $S$ such that
$\lim_{n\to\infty} \Vert u_n - u \Vert_{\sup} = 0$.
We set $g_n := g' + u_n$.
Let $q_n$ be the greatest element of $G(D, g_n)$.
As 
\[
g - \Vert u_n - u \Vert_{\sup} \leq g_n \leq g + \Vert u_n - u \Vert_{\sup}\ \aew,
\]
we can see $q - \Vert u_n - u \Vert_{\sup} \leq q_n \leq q + \Vert u_n - u \Vert_{\sup}\ \aew$.
Thus, if we set $q_n = g' + v_n\ \aew$ and $q = g' + v\ \aew$ for some continuous functions $v_n$ and $v$ on $S$,  
then
$\lim_{n\to\infty} \Vert v_n - v \Vert_{\sup} = 0$.
Moreover, by using the previous observation,
\begin{align*}
0 & = \int_S (g_n - q_n) c_1(D, q_n) = \int_{S} (u_n - v_n) c_1(D, q_n).
\end{align*}
Since $c_1(D, q_n) = c_1(D, g') + dd^c([v_n]) \geq 0$, by using \cite[Corollary~3.6]{Dem} or
\cite[Lemma~1.2.1]{MoD}, we can see
that $c_1(D, q_n)$ converges weakly to $c_1(D, q)$ as functionals on $C^0(S)$.
In particular, there is a constant $C$ such that 
$\int_S c_1(D, q_n) \leq C$
for all $n$.
Thus
\begin{multline*}
\left\vert \int_{S} (u_n - v_n) c_1(D, q_n) - \int_{S} (u - v) c_1(D, q) \right\vert \\
\leq
\left\vert \int_{S} (u_n - v_n) c_1(D, q_n) - \int_{S} (u - v) c_1(D, q_n) \right\vert \qquad\qquad\qquad\qquad\qquad \\
\qquad\qquad \qquad+ \left\vert \int_{S} (u - v) c_1(D, q_n) - \int_{S} (u - v) c_1(D, q) \right\vert \\
\leq \Vert (u-v) - (u_n - v_n) \Vert_{\sup}C + \left\vert \int_{S} (u - v) c_1(D, q_n) - 
\int_{S} (u - v) c_1(D, q) \right\vert.
\end{multline*}
Therefore,
\[
\lim_{n\to\infty} \int_{S} (u_n - v_n) c_1(D, q_n) = \int_{S} (u - v) c_1(D, q),
\]
and hence the assertion follows.
\end{proof}

\begin{proof}[Proof of Theorem~\rom{\ref{thm:relative:Zariski:decomp}}]
Let us start the proof of Theorem~\ref{thm:relative:Zariski:decomp}.
First we consider the existence of the greatest element of $\Upsilon_{rel}(\overline{D})$.
By Lemma~\ref{lem:vertical:Zariski:decomp},
there is the greatest element $Q$ of $\Sigma(D)$.
Note that $D - Q$ is vertical.
On the other hand,
let $G(\overline{D})$ be the set of all $D$-Green functions $h$ of $C^0$-type such that
$c_1(D, h)$ is a positive current and $h \leq g\ \aew$.
By Lemma~\ref{lem:riemann:surface:norm},
there is the greatest element $q$ of $G(\overline{D})$, that is,
$q \in G(\overline{D})$ and $h \leq q\ \aew$ for all $h \in G(\overline{D})$.
Let us see that $q$ is $F_{\infty}$-invariant. For this purpose, it is sufficient to see that
$F_{\infty}^*(q) \in G(\overline{D})$ and  $h \leq F_{\infty}^*(q)\ \aew$ for all $h \in G(\overline{D})$.
The first assertion follows from \cite[Lemma~5.1.2]{MoArZariski}.
Let us see the second assertion. Since $F_{\infty}^*(h) \in G(\overline{D})$ by \cite[Lemma~5.1.2]{MoArZariski},
$F_{\infty}^*(h) \leq q\ \aew$, and hence $h \leq F_{\infty}^*(q)\ \aew$.
Here we set $\overline{Q} := (Q, q)$.
Clearly $\overline{Q} \in \Upsilon_{rel}(\overline{D})$.
Moreover, for $\overline{M} \in \Upsilon_{rel}(\overline{D})$,
$(M', h') := \max \{ \overline{Q}, \overline{M} \} \in \Upsilon_{rel}(\overline{D})$
by
Claim~\ref{claim:lem:vertical:Zariski:decomp:02} and \cite[Lemma~9.1.1]{MoArZariski}
(for the definition of $\max \{ \overline{Q}, \overline{M} \}$,
see Conventions and terminology~\ref{CT:max:arith:div}).
In particular, $M' \in \Sigma(D)$ and $h' \in G(\overline{D})$, and hence
$(M', h') = \overline{Q}$, that is,
$\overline{M} \leq \overline{Q}$, as required.

\medskip
Finally let us see (a) --- (e).
As $Q$ is the greatest element of $\Sigma(D)$,
(a), (c) and the first assertion of (d) follow from Lemma~\ref{lem:vertical:Zariski:decomp}.
The second assertion of (d) follows from (1) in Lemma~\ref{lem:riemann:surface:norm}.
The property (e) is a consequence of (d).
Finally we consider (b).
If we set $\overline{N} = (N, k)$, then
$\adeg(\overline{Q} \cdot (N, 0)) = 0$ by (b) in Lemma~\ref{lem:vertical:Zariski:decomp},
and $\adeg(\overline{Q} \cdot (0, k)) = 0$ by (2) in Lemma~\ref{lem:riemann:surface:norm},
and hence $\adeg(\overline{Q} \cdot \overline{N}) = 0$.
\end{proof}

\section{Generalized Hodge index theorem for $\acvol$}
\label{sec:GHIT:acvol}

In this section, we consider a refinement of the generalized Hodge index theorem
on an arithmetic surface, that is, the case where $d=1$.
As in Conventions and terminology~\ref{CT:Div:Nef},
an arithmetic $\RR$-Cartier divisor $\overline{D}$ of $C^0$-type on $X$
is said to be {\em integrable} if
$\overline{D} = \overline{P} - \overline{Q}$ for some
nef arithmetic $\RR$-Cartier divisors $\overline{P}$ and $\overline{Q}$ of $C^0$-type.

\begin{Theorem}
\label{thm:cvol:self:ineq}
Let $\overline{D}$ be an integrable arithmetic $\RR$-Cartier divisor of $C^0$-type on $X$
such that $\deg(D_K) \geq 0$.
Then $\adeg (\overline{D}^2) \leq \acvol(\overline{D})$ and the equality holds if and only if
$\overline{D}$ is relatively nef.
In particular, $\adeg (\overline{D}^2) \leq \avol(\overline{D})$.
\end{Theorem}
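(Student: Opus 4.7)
The plan is to apply the relative Zariski decomposition Theorem~\ref{thm:relative:Zariski:decomp} to write $\overline{D} = \overline{Q} + \overline{N}$ and thereby reduce the problem to two separate assertions. Since $\adeg(\overline{Q} \cdot \overline{N}) = 0$ by part~(b) of that theorem, bilinearity of the intersection pairing on integrable divisors gives
\[
\adeg(\overline{D}^2) = \adeg(\overline{Q}^2) + \adeg(\overline{N}^2).
\]
Combined with $\acvol(\overline{Q}) = \acvol(\overline{D})$ from part~(e), the theorem reduces to establishing (I) the equality $\adeg(\overline{Q}^2) = \acvol(\overline{Q})$ for the relatively nef piece $\overline{Q}$, and (II) the inequality $\adeg(\overline{N}^2) \leq 0$ with equality forcing $\overline{N} = (0,0)$.

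For (II), write $\overline{N} = (N, k)$ with $N = D - Q$ vertical effective (part~(a)) and $k = g - q \geq 0$ almost everywhere (by the construction of $q$ in Lemma~\ref{lem:riemann:surface:norm}). Decompose $\overline{N} = (N, 0) + (0, k)$ and expand the self-intersection. The mixed term $\adeg((N, 0) \cdot (0, k))$ vanishes because $N$ is vertical, hence trivial on $X(\CC)$. The finite-place contribution $\adeg((N, 0)^2)$ is a nonpositive weighted sum of fiberwise self-intersections $(N|_{\pi^{-1}(P)})^2$ by Zariski's lemma; the condition $\pi^{-1}(P)_{red} \not\subseteq \Supp(N)$ from part~(c) upgrades nonpositivity to strict negativity unless $N|_{\pi^{-1}(P)} = 0$. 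The archimedean contribution $\adeg((0, k)^2) = \frac{1}{2}\int_{X(\CC)} k\,dd^c[k] = -\frac{1}{2}\int_{X(\CC)} dk \wedge d^c k \leq 0$ follows by integration by parts. If both summands vanish, then $N = 0$ and $k$ is locally constant, whence $c_1(\overline{D}) = c_1(\overline{Q})$ is a positive current, so $g$ itself lies in $G(\overline{D})$ and maximality of $q$ forces $q = g$ a.e., giving $k = 0$ a.e.

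Assertion (I) is an arithmetic Hilbert--Samuel statement for relatively nef divisors. For nef $\overline{Q}$ it is classical (cf.\ \cite[Corollary~5.5]{MoCont}, \cite[Proposition-Definition~6.4.1]{MoArZariski}). In the relatively nef case, the plan is to use that $\pi$-nefness of $Q$ combined with $\deg(Q_K) \geq 0$ yields the relative vanishing $R^1\pi_*(\OO(nQ)) = 0$ for $n \gg 0$ (hence $H^1(X, nQ) = 0$, since $\Spec(O_K)$ is affine), while positivity of $c_1(\overline{Q})$ as a current governs the archimedean side; arithmetic Riemann--Roch on $X$ then yields $\hat\chi(H^0(X, nQ), \Vert\cdot\Vert_{n\overline{Q}}) = \frac{n^2}{2}\adeg(\overline{Q}^2) + o(n^2)$, and taking $\limsup$ gives (I). The main obstacle is precisely this step: one must verify that the absence of nefness along horizontal curves does not spoil the arithmetic Hilbert--Samuel asymptotic, whose standard treatment leans on pointwise positivity of a smooth Chern form. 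Once (I) and (II) are in place, combining them gives $\adeg(\overline{D}^2) \leq \acvol(\overline{D})$, with equality forcing $\overline{N} = 0$, i.e.\ $\overline{D} = \overline{Q}$ is relatively nef; the ``in particular'' assertion $\adeg(\overline{D}^2) \leq \avol(\overline{D})$ is then immediate from $\acvol(\overline{D}) \leq \avol(\overline{D})$.
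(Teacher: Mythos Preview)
Your overall architecture---apply the relative Zariski decomposition $\overline{D}=\overline{Q}+\overline{N}$ and reduce to (I) $\adeg(\overline{Q}^2)=\acvol(\overline{Q})$ and (II) $\adeg(\overline{N}^2)\leq 0$ with its equality characterization---is exactly the paper's, and your treatment of (II) is essentially the same. Two minor points: you must first pass to a desingularization, since Theorem~\ref{thm:relative:Zariski:decomp} is stated for regular $X$; and the integration-by-parts step $\int_{X(\CC)} k\,dd^c[k]\leq 0$ with equality iff $k$ is locally constant needs justification when $k$ is merely continuous (the paper invokes \cite[Proposition~1.2.3 and Proposition~2.1.1]{MoD}). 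For the equality case you also do not need to force $k=0$: once $N=0$ and $k$ is locally constant, $\overline{D}=\overline{Q}+(0,k)$ is already relatively nef.

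The genuine gap is your route to (I). The divisor $Q$ is an $\RR$-divisor and the Green function $q$ is only of $C^0$-type, so neither the sheaf $\OO_X(nQ)$ nor the Gillet--Soul\'e arithmetic Riemann--Roch theorem is available in the form you invoke; and even in the integral case, $\pi$-nefness with $\deg(Q_K)\geq 0$ does not give $R^1\pi_*\OO_X(nQ)=0$ for $n\gg 0$ (take $Q=0$ on a fibration of positive genus, so the argument collapses precisely in the boundary case $\deg(Q_K)=0$ that you must cover). The paper establishes (I) by a five-step approximation instead: Ikoma's result \cite[Theorem~3.5.1]{Ikoma} gives the equality for $\QQ$-divisors of $C^\infty$-type with semipositive Chern form; one perturbs by an ample class and passes to $\RR$-coefficients using continuity of $\acvol$ \cite[Corollary~3.4.4]{Ikoma}; one then regularizes the $C^0$-metric by smooth metrics with semipositive form via \cite[Lemma~4.2]{MoArZariski} and takes limits using \cite[Lemma~1.2.1]{MoD} for the intersection side; a final perturbation handles $\deg(D_K)=0$. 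Your acknowledged ``main obstacle'' is thus real, and it is resolved not by cohomology vanishing and Riemann--Roch but by this continuity/approximation scheme.
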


\begin{proof}
Let $\mu : X' \to X$ be a desingularization of $X$ (cf. \cite{Lip}).
Then $\adeg(\overline{D}^2) = \adeg(\mu^*(\overline{D})^2)$ and
$\acvol(\overline{D}) = \acvol(\mu^*(\overline{D}))$.
Moreover, $\overline{D}$ is relatively nef if and only if $\mu^*(\overline{D})$ is relatively nef.
Therefore we may assume that $X$ is regular.

\begin{Claim}
\label{claim:thm:cvol:self:ineq:01}
If $\overline{D}$ is relatively nef,
then $\adeg (\overline{D}^2) = \acvol(\overline{D})$.
\end{Claim}

\begin{proof}
We divide the proof into five steps:

{\bf Step 1} (the case where $\overline{D}$ is an arithmetic $\QQ$-Cartier divisor of $C^{\infty}$-type and
$c_1(\overline{D})$ is a semi-positive form) :
In this case, the assertion follows from Ikoma \cite[Theorem~3.5.1]{Ikoma}.

{\bf Step 2} (the case where
$\overline{D}$ is of $C^{\infty}$-type,
$c_1(\overline{D})$ is a positive form and $\adeg(\rest{\overline{D}}{C}) > 0$ for all vertical reduced and irreducible 
$1$-dimensional closed subschemes $C$) :
We choose
arithmetic Cartier divisors $\overline{D}_1, \ldots, \overline{D}_l$
of $C^{\infty}$-type and real numbers $a_1, \ldots, a_l$ such that
$\overline{D} = a_1 \overline{D}_1 + \cdots + a_l \overline{D}_l$.
Then there is a positive number $\delta_0$ such that
$c_1(b_1 \overline{D}_1 + \cdots + b_l \overline{D}_l)$ is a positive form for all
$b_1, \ldots, b_l \in \QQ$ with $\vert b_i - a_i \vert \leq \delta_0$ ($\forall i=1, \ldots, l$).
Let $C$ be a smooth fiber of $X \to \Spec(O_K)$ over $P$.
Then, for $b_1, \ldots, b_l \in \QQ$ with $\vert b_i - a_i \vert \leq \delta_0$ ($\forall i=1, \ldots, l$),
\[
\adeg \left( \rest{(b_1 \overline{D}_1 + \cdots + b_l \overline{D}_l)}{C} \right)
= \deg((b_1 D_1 + \cdots + b_l D_l)_K) \log \#(O_K/P) > 0.
\]
Let $C_1, \ldots, C_r$ be all irreducible components of singular fibers of $X \to \Spec(O_K)$.
Then, for each $j=1, \ldots, r$,
there is a positive number $\delta_j$ such that
\[
\adeg \left( \rest{(b_1 \overline{D}_1 + \cdots + b_l \overline{D}_l)}{C_j} \right) > 0
\]
for all $b_1, \ldots, b_l \in \QQ$ with $\vert b_i - a_i \vert \leq \delta_j$ ($\forall i=1, \ldots, l$).
Therefore, if we set $\delta = \min \{ \delta_0, \delta_1, \ldots, \delta_r \}$, then,
for $b_1, \ldots, b_l \in \QQ$ with $\vert b_i - a_i \vert \leq \delta$ ($\forall i=1, \ldots, l$),
\[
c_1(b_1 \overline{D}_1 + \cdots + b_l \overline{D}_l)
\]
is a positive form and
$\adeg \left( \rest{(b_1 \overline{D}_1 + \cdots + b_l \overline{D}_l)}{C} \right) > 0$
for all vertical reduced and irreducible $1$-dimensional closed subschemes $C$ on $X$, and hence
\[
\adeg ((b_1 \overline{D}_1 + \cdots + b_l \overline{D}_l)^2) = \acvol(b_1 \overline{D}_1 + \cdots + b_l \overline{D}_l)
\]
by Step~1.
Thus the assertion follows by the continuity of $\acvol$ due to Ikoma \cite[Corollary~3.4.4]{Ikoma}.

{\bf Step 3} (the case where $\overline{D}$ is of $C^{\infty}$-type and
$c_1(\overline{D})$ is a semi-positive form) :
Let $\overline{A}$ be an ample arithmetic Cartier divisor of $C^{\infty}$-type on $X$.
Then, for any positive $\epsilon$, $c_1(\overline{D} + \epsilon \overline{A})$ is
a positive form and $\adeg(\rest{(\overline{D} + \epsilon \overline{A})}{C}) > 0$ 
for all vertical reduced and irreducible $1$-dimensional closed subschemes $C$ on $X$, so that, by Step~2,
\[
\adeg((\overline{D} + \epsilon  \overline{A})^2) =
\acvol(\overline{D} + \epsilon  \overline{A}).
\]
Therefore the assertion follows by virtue of the continuity of $\acvol$.

{\bf Step 4} (the case where $\deg(D_K) > 0$) :
Let $h$ be a $D$-Green function of $C^{\infty}$-type such that
$c_1(D, h)$ is a positive form.
Then there is a continuous function $\phi$ on $X(\CC)$ such that
$\overline{D} = (D, h + \phi) $, and hence
$c_1(D, h) + dd^c([\phi]) \geq 0$.
Thus, by \cite[Lemma~4.2]{MoArZariski}, 
there is a sequence $\{ \phi_n \}_{n=1}^{\infty}$ 
of $F_{\infty}$-invariant $C^{\infty}$-functions on $X(\CC)$ with the following properties:
\begin{enumerate}
\renewcommand{\labelenumi}{(\alph{enumi})}
\item
$\lim_{n\to\infty} \Vert \phi_n - \phi \Vert_{\sup} = 0$.

\item
If we set $\overline{D}_n = (D, h + \phi_n)$, then
$c_1(\overline{D}_n)$ is a semipositive form.
\end{enumerate}
Then, by Step~3, 
$\adeg(\overline{D}_n^2) =
\acvol(\overline{D}_n)$ for all $n$.
Note that $\lim_{n\to\infty} \acvol(\overline{D}_n) = \acvol(\overline{D})$
by using the continuity of $\acvol$.
Moreover, by \cite[Lemma~1.2.1]{MoD},
\[
\lim_{n\to\infty} \adeg(\overline{D}_n^2) = \adeg(\overline{D}^2),
\]
as required.

{\bf Step 5} (general case) :
Finally we prove the assertion of the claim.
As before,  let $\overline{A}$ be an ample arithmetic Cartier divisor 
of $C^{\infty}$-type on $X$.
Then, for any positive number $\epsilon$, $\deg(D_K + \epsilon A_K) > 0$.
Thus, in the same way as Step~3, the assertion follows from Step~4.
\end{proof}

Let us go back to the proof of the theorem.
Let $\overline{Q}$ be the greatest element of $\Upsilon_{rel}(\overline{D})$ 
(cf. Theorem~\ref{thm:relative:Zariski:decomp}) and $\overline{N} := \overline{D} - \overline{Q}$.
Then, by using Claim~\ref{claim:thm:cvol:self:ineq:01} and
the properties (b) and (e) in Theorem~\ref{thm:relative:Zariski:decomp},
\[
\acvol(\overline{D}) - \adeg(\overline{D}^2) = \acvol(\overline{Q}) - \adeg(\overline{D}^2) =
\adeg(\overline{Q}^2) - \adeg(\overline{D}^2) = - \adeg(\overline{N}^2).
\]
On the other hand, if we set $\overline{N} = (N, k)$, then
\[
\adeg(\overline{N}^2) = \adeg((N, 0)^2) + \frac{1}{2} \int_{X(\CC)} k dd^c(k)
\]
because $N$ is vertical.
By (c) in Theorem~\ref{thm:relative:Zariski:decomp} together with
Zariski's lemma, $\adeg((N, 0)^2) \leq 0$ and the equality holds if and only if $N = 0$.
Moreover, by \cite[Proposition~1.2.3 and Proposition~2.1.1]{MoD},
\[
\int_{X(\CC)} k dd^c(k) \leq 0
\]
and the equality holds if and only if $k$ is locally constant.
Thus $\adeg(\overline{N}^2) \leq 0$, that is,
$\acvol(\overline{D}) \geq \adeg(\overline{D}^2)$.
Moreover, if $\overline{D}$ is relatively nef, then
$\acvol(\overline{D}) = \adeg(\overline{D}^2)$ by Claim~\ref{claim:thm:cvol:self:ineq:01}.
Conversely, if $\acvol(\overline{D}) = \adeg(\overline{D}^2)$, that is, $\adeg(\overline{N}^2) = 0$,
then $N=0$ and $k$ is locally constant, and hence $\overline{D} = \overline{Q} + (0, k)$ is
relatively nef.
\end{proof}

As a corollary of the above theorem, 
we have the following:

\begin{Corollary}
\label{cor:properties:relative:Zariski:decomp}
We assume that $X$ is regular.
The following are equivalent:
\begin{enumerate}
\renewcommand{\labelenumi}{(\arabic{enumi})}
\item
$\overline{Q}$ is the greatest element of $\Upsilon_{rel}(\overline{D})$.

\item
$\overline{Q}$ is an element of $\Upsilon_{rel}(\overline{D})$ with the following properties:
\begin{enumerate}
\renewcommand{\labelenumii}{(\roman{enumii})}
\item
$D - Q$ is vertical.

\item
$\adeg(\overline{Q} \cdot \overline{B}) = 0$ and
$\adeg(\overline{B}^2) < 0$ for all 
integrable arithmetic $\RR$-Cartier divisors $\overline{B}$ of $C^0$-type with
$(0,0) \lneqq \overline{B} \leq \overline{D} - \overline{Q}$.
\end{enumerate}
\end{enumerate}
\end{Corollary}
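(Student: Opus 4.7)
The plan is to prove the two implications separately, deriving (2) from the structural properties of the greatest element supplied by Theorem~\ref{thm:relative:Zariski:decomp} and deriving (1) from (2) by comparison with that greatest element.

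For $(1) \Rightarrow (2)$, property (i) is immediate from part (a) of Theorem~\ref{thm:relative:Zariski:decomp}. Fix an integrable $\overline{B} = (B, \ell)$ with $(0,0) \lneqq \overline{B} \leq \overline{D} - \overline{Q} = \overline{N}$, and write $\overline{N} = (N, k)$. Since $B \leq N$ with $N$ vertical, the divisor $B$ is vertical with $\Supp(B) \subseteq \Supp(N)$; hence the finite-place contribution to $\adeg(\overline{Q} \cdot \overline{B})$ vanishes by Lemma~\ref{lem:vertical:Zariski:decomp}(b). The archimedean contribution equals $\frac{1}{2}\int_{X(\CC)} \ell\, c_1(\overline{Q})$, and since $0 \leq \ell \leq k$ while $c_1(\overline{Q})$ is a positive measure with $\int k\, c_1(\overline{Q}) = 0$ by Lemma~\ref{lem:riemann:surface:norm}(2), this integral also vanishes. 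So $\adeg(\overline{Q} \cdot \overline{B}) = 0$. For the strict inequality $\adeg(\overline{B}^2) < 0$, decompose $\adeg(\overline{B}^2) = \adeg((B,0)^2) + \adeg((0,\ell)^2)$; the cross term is zero because $B$ is vertical. The first summand is $\leq 0$ by Zariski's lemma fiber by fiber, and strictly negative when $B \neq 0$ because part (c) of Theorem~\ref{thm:relative:Zariski:decomp} prevents $\Supp(B)$ from containing any full fiber. The second summand is $\leq 0$ by \cite[Proposition~1.2.3 and Proposition~2.1.1]{MoD}. In the remaining case $B = 0$, $\overline{B} = (0, \ell)$ with $\ell \not\equiv 0$ and $\ell \geq 0$, the maximality of $\overline{Q}$ excludes locally constant $\ell$: otherwise $\overline{Q} + (0, \ell) = (Q, q + \ell)$ would again lie in $\Upsilon_{rel}(\overline{D})$ (the first Chern current is unchanged and $q + \ell \leq g$ a.e.), strictly dominating $\overline{Q}$. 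Hence $\ell$ is not locally constant, and the MoD reference yields the strict negativity.

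For $(2) \Rightarrow (1)$, let $\overline{Q}'$ be the greatest element of $\Upsilon_{rel}(\overline{D})$ guaranteed by Theorem~\ref{thm:relative:Zariski:decomp}, and set $\overline{B} := \overline{Q}' - \overline{Q}$. Both $\overline{Q}$ and $\overline{Q}'$ are relatively nef, hence integrable by Conventions~\ref{CT:Div:Nef}, so $\overline{B}$ is integrable; moreover $\overline{B} \geq (0,0)$ and $\overline{B} \leq \overline{D} - \overline{Q}$. If $\overline{B} \neq (0,0)$, hypothesis (ii) gives $\adeg(\overline{Q} \cdot \overline{B}) = 0$ and $\adeg(\overline{B}^2) < 0$, whence $\adeg(\overline{Q}' \cdot \overline{B}) = \adeg(\overline{Q} \cdot \overline{B}) + \adeg(\overline{B}^2) < 0$. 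But writing $\overline{B} = (B, \ell)$ with $B \geq 0$ vertical and $\ell \geq 0$, the $\pi$-nefness of $Q'$ forces the finite part of $\adeg(\overline{Q}' \cdot \overline{B})$ to be non-negative, and the positivity of the measure $c_1(\overline{Q}')$ forces the archimedean part $\frac{1}{2}\int \ell\, c_1(\overline{Q}')$ to be non-negative as well---a contradiction. Thus $\overline{B} = (0,0)$ and $\overline{Q} = \overline{Q}'$.

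The main technical obstacle I expect is the borderline case $\overline{B} = (0, \ell)$ in the forward direction. The Green-function self-intersection $\frac{1}{2}\int \ell\, dd^c([\ell])$ fails to be strictly negative when $\ell$ is locally constant, so analytic inputs alone do not suffice for the strict inequality in (ii); the maximality of $\overline{Q}$ must be invoked to rule out this degenerate case, and one has to verify carefully that $\overline{Q} + (0,\ell)$ genuinely stays in $\Upsilon_{rel}(\overline{D})$---relative nefness, not full nefness, is crucial here, as is the inequality $\ell \leq k$.
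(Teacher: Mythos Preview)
Your proof is correct, but the route differs from the paper's in both directions.

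For $(1)\Rightarrow(2)$, the paper does not compute $\adeg(\overline{Q}\cdot\overline{B})$ and $\adeg(\overline{B}^2)$ directly. Instead it applies the generalized Hodge index theorem for $\acvol$ (Theorem~\ref{thm:cvol:self:ineq}) to $\overline{D}_\epsilon=\overline{Q}+\epsilon\overline{B}$: since $\overline{Q}\leq\overline{D}_\epsilon\leq\overline{D}$ and the bijectivity in Theorem~\ref{thm:relative:Zariski:decomp}(d),(e) force $\acvol(\overline{D}_\epsilon)=\acvol(\overline{Q})=\adeg(\overline{Q}^2)$, the inequality $\adeg(\overline{D}_\epsilon^2)\leq\acvol(\overline{D}_\epsilon)$ yields $2\adeg(\overline{Q}\cdot\overline{B})+\epsilon\adeg(\overline{B}^2)\leq 0$ for all $0<\epsilon\leq 1$. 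This gives $\adeg(\overline{Q}\cdot\overline{B})\leq 0$, and the opposite inequality comes from relative nefness of $\overline{Q}$ exactly as you argue; the equality case $\adeg(\overline{B}^2)=0$ is then excluded by the same Zariski/MoD analysis and maximality argument you use. Your approach bypasses Theorem~\ref{thm:cvol:self:ineq} entirely, going straight to the ingredients of Theorem~\ref{thm:relative:Zariski:decomp} (Lemma~\ref{lem:vertical:Zariski:decomp}(b), Lemma~\ref{lem:riemann:surface:norm}(2)) and Zariski's lemma; this is more elementary and shows that the corollary does not actually depend on the $\acvol$ Hodge index theorem, though it makes the argument longer.

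For $(2)\Rightarrow(1)$, the paper does not invoke the existence of a greatest element $\overline{Q}'$. It takes an arbitrary $\overline{M}\in\Upsilon_{rel}(\overline{D})$, forms $\overline{A}=\max\{\overline{Q},\overline{M}\}$ (which is again relatively nef by Claim~\ref{claim:lem:vertical:Zariski:decomp:02} and \cite[Lemma~9.1.2]{MoArZariski}), sets $\overline{B}=\overline{A}-\overline{Q}$, and derives the same contradiction you reach with $\overline{Q}'$ in place of $\overline{A}$. Your shortcut via $\overline{Q}'$ is valid since Theorem~\ref{thm:relative:Zariski:decomp} has already been established, but the paper's argument is self-contained in that it shows maximality directly rather than equality with a known maximal element.
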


\begin{proof}
First, let us see the following claim:

\begin{Claim}
\label{claim:cor:properties:relative:Zariski:decomp:01}
Let $\overline{D}_1$ and $\overline{D}_2$ be arithmetic $\RR$-Cartier divisors of $C^0$-type on $X$
such that $\overline{D}_1 \leq \overline{D}_2$. If the natural map $H^0(X, nD_1) \to H^0(X, nD_2)$ is
bijective for each $n \geq 0$, then $\acvol(\overline{D}_1) \leq \acvol(\overline{D}_2)$,
\end{Claim}

\begin{proof}
This is obvious because $\Vert\cdot\Vert_{n\overline{D}_1} \geq \Vert\cdot\Vert_{n\overline{D}_2}$.
\end{proof}

(1)  $\Longrightarrow$ (2) :
By the property (a) in Theorem~\ref{thm:relative:Zariski:decomp}, $D - Q$ is vertical.
For $0 < \epsilon \leq 1$, we set
$\overline{D}_{\epsilon} = \overline{Q} + \epsilon \overline{B}$.
Then $\overline{D}_{\epsilon}$ is integrable
and $\acvol(\overline{D}_{\epsilon}) = \acvol(\overline{Q})$
because
\[
\acvol(\overline{Q}) \leq \acvol(\overline{D}_{\epsilon}) \leq \acvol(\overline{D})
\quad\text{and}\quad
\acvol(\overline{Q}) = \acvol(\overline{D})
\]
by Claim~\ref{claim:cor:properties:relative:Zariski:decomp:01} and the properties (d) and (e) in
Theorem~\ref{thm:relative:Zariski:decomp}.
Thus, by using Theorem~\ref{thm:cvol:self:ineq},
\[
\adeg(\overline{D}_{\epsilon}^2) \leq \acvol(\overline{D}_{\epsilon}) = \acvol(\overline{Q}) = \adeg(\overline{Q}^2),
\]
which implies $2\adeg(\overline{Q} \cdot \overline{B}) + \epsilon \adeg(\overline{B}^2) \leq 0$.
In particular, $\adeg(\overline{Q} \cdot \overline{B}) \leq 0$. On the other hand, 
as $B$ is vertical,
\[
\adeg(\overline{Q} \cdot \overline{B}) = \adeg(\overline{Q} \cdot (B, 0)) + \frac{1}{2} \int_{X(\CC)} c_1(\overline{Q}) b 
\geq 0
\]
where $\overline{B} = (B, b)$. Therefore, $\adeg(\overline{Q} \cdot \overline{B}) = 0$ and
$\adeg(\overline{B}^2) \leq 0$.
Here we assume that $\adeg(\overline{B}^2) = 0$.
Note that
\[
\adeg(\overline{B}^2) = \adeg((B,0)^2) + \frac{1}{2} \int_{X(\CC)} b dd^c(b).
\]
Thus, by using the property (c) in Theorem~\ref{thm:relative:Zariski:decomp}, Zariski's lemma
and \cite[Proposition~1.2.3 and Proposition~2.1.1]{MoD},
$B = 0$ and $b$ is a locally constant function.
In particular, $\overline{Q} + \overline{B}$ is relatively nef and $\overline{Q} + \overline{B} \leq \overline{D}$,
so that $\overline{B} = 0$.

\medskip
(2)  $\Longrightarrow$ (1) :
Let $\overline{M}$ be an element of  $\Upsilon_{rel}(\overline{D})$. 
If
we set $\overline{A} := \max \{ \overline{Q}, \overline{M} \}$ 
(cf. Conventions and terminology~\ref{CT:max:arith:div})
and $\overline{B} = (B, b) := \overline{A} - \overline{Q}$,
then $\overline{B}$ is effective, $\overline{A} \leq \overline{D}$ and $\overline{A}$ is relatively nef 
by Claim~\ref{claim:lem:vertical:Zariski:decomp:02} and \cite[Lemma~9.1.2]{MoArZariski}. 
Moreover, 
\[
\overline{B} = \overline{A} - \overline{Q} \leq \overline{D} - \overline{Q}.
\]
If we assume $\overline{B} \gneqq (0,0)$, then,
by the property (ii), $\adeg(\overline{Q} \cdot \overline{B}) = 0$ and $\adeg(\overline{B}^2) < 0$.
On the other hand, 
as $\overline{A}$ is relatively nef, $\overline{B}$ is effective and $B$ is vertical
by the property (i),
\[
\adeg(\overline{B}^2) = \adeg(\overline{Q} + \overline{B} \cdot \overline{B }) = \adeg(\overline{A} \cdot \overline{B})
=\adeg(\overline{A} \cdot (B, 0)) + \frac{1}{2} \int_{X(\CC)} c_1(\overline{A}) b \geq 0,
\]
which is a contradiction, so that $\overline{B} = (0,0)$, that is,
$\overline{Q} = \overline{A}$, which means that $\overline{M} \leq \overline{Q}$, as required.
\end{proof}

\begin{Remark}
Let $\overline{D}$ be an integrable arithmetic $\RR$-Cartier divisor of $C^0$-type on $X$
with $\deg(D_K) > 0$.
For a positive number $\epsilon$, we set
\[
\alpha := \frac{\adeg(\overline{D}^2)}{[K : \QQ]\deg(D_K)} - 2 \epsilon.
\]
Then, as
$\adeg((\overline{D} - (0, \alpha))^2) = 2 \epsilon [K : \QQ]\deg(D_K) > 0$, by Theorem~\ref{thm:cvol:self:ineq},
there is 
\[
\phi \in \aH(X, n(D - (0,\alpha))) \setminus \{ 0 \}
\]
for some $n > 0$.
Note that $\Vert \phi \Vert_{n(\overline{D}-(0,\alpha))} = \Vert \phi \Vert_{n\overline{D}}\exp((n\alpha)/2)$, so that
\[
\phi \in H^0(X, nD) \setminus \{ 0\}\quad\text{and}\quad
\Vert \phi \Vert_{n\overline{D}} \leq \exp\left( -\frac{n \adeg(\overline{D}^2)}{2[K : \QQ]\deg(D_K)} + n\epsilon \right),
\]
which is nothing more than  Autissier's result \cite[Proposition~3.3.3]{AP}.
\end{Remark}

\section{Necessary condition for the equality $\avol = \acvol$}
This section is devoted to consider a necessary condition for the equality $\avol = \acvol$
as an application of the integral formulae due to Boucksom-Chen \cite{BC}.

First of all,
let us review Boucksom-Chen's integral formulae \cite{BC} in terms of 
arithmetic $\RR$-Cartier divisors. 
For details, see \cite[Section~1]{MoArLinB}.
We fix a monomial order $\precsim$ on $\ZZ^d_{\geq 0}$, that is,
$\precsim$ is a total ordering relation on $\ZZ^d_{\geq 0}$ with the following properties:
\begin{enumerate}
\renewcommand{\labelenumi}{(\alph{enumi})}
\item $(0,\ldots,0) \precsim A$ for all $A \in \ZZ^d_{\geq 0}$.

\item If $A \precsim B$ for $A, B \in \ZZ_{\geq 0}^d$, then $A + C \precsim B + C$ for all $C \in \ZZ_{\geq 0}^d$.

\end{enumerate}
The monomial order $\precsim$ on $\ZZ^d_{\geq 0}$ extends uniquely to a totally ordering relation $\precsim$ on $\ZZ^d$
such that $A + C \precsim B + C$ for all $A, B, C \in \ZZ^d$ with $A \precsim B$.
Indeed, for $A, B \in \ZZ^d$, we define $A \precsim B$ as follows:
\[
A \precsim B\quad\overset{\mathrm{def}}{\Longleftrightarrow}\quad
\text{there is $C \in \ZZ_{\geq 0}^d$ such that $A + C, B + C \in \ZZ_{\geq 0}^d$ and $A + C \precsim B + C$}.
\]
It is easy to see that this definition is well-defined and it yields the above extension.
Uniqueness is also obvious.

Let $z_P = (z_1, \ldots, z_d)$ be a local system of parameters of $\OO_{X_{\overline{K}}, P}$ for $P \in X(\overline{K})$.
Note that the completion $\widehat{\OO}_{X_{\overline{K}}, P}$ of 
$\OO_{X_{\overline{K}}, P}$ with respect to
the maximal ideal of $\OO_{X_{\overline{K}}, P}$ is naturally isomorphic to
$\overline{K}[\![z_1, \ldots, z_d]\!]$,
where $\overline{K}$ is an algebraic closure of $K$.
Thus, for $f \in \OO_{X_{\overline{K}}, P}$, we can put
\[
f = \sum_{(a_1, \ldots, a_d) \in \ZZ_{\geq 0}^d} c_{(a_1, \ldots, a_d)} z_1^{a_1} \cdots z_d^{a_d},\qquad(c_{(a_1, \ldots, a_d)} \in \overline{K}).
\]
We define $\ord_{z_P}^{\precsim}(f)$ to be
\[
\ord_{z_P}^{\precsim}(f) := \begin{cases}
\min\limits_{\precsim} \left\{ (a_1, \ldots, a_d) \mid c_{(a_1, \ldots, a_d)} \not= 0 \right\} & \text{if $f \not= 0$},\\
\infty & \text{otherwise},
\end{cases}
\]
which gives rise to a rank $d$ valuation, that is,
the following properties are satisfied:
\begin{enumerate}
\renewcommand{\labelenumi}{(\roman{enumi})}
\item
$\ord_{z_P}^{\precsim}(fg) = \ord_{z_P}^{\precsim}(f) + \ord_{z_P}^{\precsim}(g)$ for $f, g \in \OO_{X_{\overline{K}},P}$.

\item
$\min \left\{ \ord_{z_P}^{\precsim}(f), \ord_{z_P}^{\precsim}(g) \right\} \precsim \ord_{z_P}^{\precsim}(f + g)$ for $f, g \in \OO_{X_{\overline{K}},P}$.
\end{enumerate}
By the property (i), $\ord_{z_P}^{\precsim} : \OO_{X_{\overline{K}},P} \setminus \{ 0 \} \to \ZZ_{\geq 0}^d$ has the natural extension
\[
\ord_{z_P}^{\precsim} : \Rat(X_{\overline{K}})^{\times} \to \ZZ^d
\]
given by $\ord_{z_P}^{\precsim}(f/g) = \ord_{z_P}^{\precsim}(f) - \ord_{z_P}^{\precsim}(g)$.
Note that this extension also satisfies the same properties (i) and (ii) as before.
Since $\ord_{z_P}^{\precsim}(u) = (0,\ldots,0)$ for all $u \in \OO^{\times}_{X_{\overline{K}},P}$, 
$\ord_{z_P}^{\precsim}$ induces
$\Rat(X_{\overline{K}})^{\times}/\OO^{\times}_{X_{\overline{K}},P} \to \ZZ^d$. 
The composition of homomorphisms
\[
\Div(X_{\overline{K}}) \overset{\alpha_P}{\longrightarrow} \Rat^{\times}(X_{\overline{K}})/\OO^{\times}_{X_{\overline{K}},P} \overset{\ord_{z_P}^{\precsim}}{\longrightarrow} \ZZ^d
\]
is
denoted by $\mult_{z_P}^{\precsim}$, where 
$\alpha_P : \Div(X_{\overline{K}}) \to \Rat(X_{\overline{K}})^{\times}/\OO^{\times}_{X_{\overline{K}},P}$ is the natural
homomorphism. Moreover, the homomorphism $\mult_{z_P}^{\precsim} : \Div(X_{\overline{K}}) \to \ZZ^d$ gives rise to
the natural extension
$\Div(X_{\overline{K}}) \otimes_{\ZZ} \RR \to \RR^d$
over $\RR$.
By abuse of notation, the above extension is also denoted by $\mult_{z_P}^{\precsim}$.

Let $\overline{D} = (D, g)$ be an arithmetic $\RR$-Cartier divisor of $C^0$-type
(cf. Conventions and terminology~\ref{CT:arith:div}).
Let $V_{\bullet} =  \bigoplus_{m \geq 0} V_m$ be a graded subalgebra of $R(D_K) := \bigoplus_{m \geq 0} H^0(X_K, mD_K)$ over $K$.
The Okounkov body $\Delta(V_{\bullet})$ of $V_{\bullet}$ is defined by
the closed convex hull of
\[
\bigcup_{m > 0} \left\{ \mult_{z_P}^{\precsim}(D_{\overline{K}} + (1/m) (\phi)) \in \RR_{\geq 0}^d \mid \phi \in V_{m} \otimes_K \overline{K} \setminus \{ 0 \} \right\}.
\]
For $t \in \RR$, let $V_{\bullet}^t$ be a graded subalgebra of $V_{\bullet}$ given by
\[
V_{\bullet}^t := \bigoplus_{m \geq 0}  \left\langle V_m \cap \aH(X, m (\overline{D} + (0, -2t))) \right\rangle_K,
\]
where $\left\langle V_m \cap \aH(X, m (\overline{D} + (0, -2t))) \right\rangle_K$ means the subspace of $V_m$ generated by
$V_m \cap \aH(X, m (\overline{D} + (0, -2t)))$ over $K$.
Let $G_{(\overline{D};V_{\bullet})} : \Delta(V_{\bullet}) \to \RR \cup \{ -\infty \}$ 
be a function given by
\[
G_{(\overline{D};V_{\bullet})}(x) := \begin{cases}
\sup \left\{ t \in \RR \mid x \in \Delta(V_{\bullet}^t) \right\} & \text{if $x \in \Delta(V_{\bullet}^t)$ for some $t$}, \\
-\infty & \text{otherwise}.
\end{cases}
\]
Note that $G_{(\overline{D};V_{\bullet})}$ is an upper
semicontinuous concave function (cf. \cite[SubSection~1.3]{BC}). 
We define $\avol(\overline{D};V_{\bullet})$ and $\acvol(\overline{D};V_{\bullet})$ 
to be
\[
\begin{cases}
{\displaystyle \avol(\overline{D};V_{\bullet}) := \limsup_{m\to\infty} 
\frac{\# \log \left( V_m \cap \aH(X, m\overline{D}) \right)}{m^{d+1}/(d+1)!}}, \\
{\displaystyle  \acvol(\overline{D};V_{\bullet}) := \limsup_{m\to\infty} 
\frac{\hat{\chi} \left(V_m \cap H^0(X, mD), \Vert\cdot\Vert_{m\overline{D}} \right)}{m^{d+1}/(d+1)!}}. 
\end{cases}
\]
Moreover, for $\xi \in X_K$,
we define $\mu_{\QQ,\xi}(\overline{D}; V_{\bullet})$ as follows:
\begin{multline*}
\mu_{\QQ,\xi}(\overline{D}; V_{\bullet}) :=  \\
\begin{cases}
\inf \left\{ \mult_{\xi}\left(D + \frac{1}{m}(\phi)\right) \mid m \in \ZZ, \ \phi \in V_m \cap \aH(X, m\overline{D}) \setminus \{ 0 \} \right\} & 
\text{if $N(\overline{D}; V_{\bullet}) \not= \emptyset$}, \\
\infty & \text{otherwise},
\end{cases}
\end{multline*}
where $N(\overline{D}; V_{\bullet}) = \{ m \in \ZZ_{>0} \mid V_m \cap \aH(X, m\overline{D}) \not= \{ 0 \} \}$.
Note that $\avol(\overline{D};V_{\bullet}) = \avol(\overline{D})$,
$\acvol(\overline{D};V_{\bullet}) = \acvol(\overline{D})$ and $\mu_{\QQ,\xi}(\overline{D}; V_{\bullet}) = \mu_{\QQ,\xi}(\overline{D})$
if $V_m = H^0(X_K, mD_K)$ for $m \gg 0$
(cf. Conventions and terminology~\ref{CT:arith:div} and \ref{CT:volume}).
Let $\Theta(\overline{D}; V_{\bullet})$ be the closure of  
\[
\left\{ x \in \Delta(V_{\bullet}) \mid 
G_{(\overline{D};V_{\bullet})}(x) > 0 \right\}.
\]
If $V_{\bullet}$ contains an ample series (cf. \cite[SubSection~1.1]{MoArLinB}),
then, in the similar way as  \cite[Theorem~2.8]{BC} and \cite[Theorem~3.1]{BC}, 
we have the following integral formulae:
\renewcommand{\theequation}{\arabic{section}.\arabic{Theorem}}
\addtocounter{Theorem}{1}
\begin{equation}
\label{eqn:integral:formula:vol}
\avol(\overline{D};V_{\bullet}) = (d+1)! [K : \QQ] \int_{\Theta(\overline{D}; V_{\bullet})}
G_{(\overline{D};V_{\bullet})}(x) dx
\end{equation}
and
\addtocounter{Theorem}{1}
\begin{equation}
\label{eqn:integral:formula:chivol}
\acvol(\overline{D};V_{\bullet}) = (d+1)! [K : \QQ] \int_{\Delta(V_{\bullet})}
G_{(\overline{D};V_{\bullet})}(x) dx.
\end{equation}
\renewcommand{\theequation}{\arabic{section}.\arabic{Theorem}.\arabic{Claim}}
Note that the arguments in \cite{BC} work for an arbitrary monomial order.
Let $\nu : \RR^d \to \RR$ be a linear map.
If we give the monomial order $\prec_{\nu}$ on $\ZZ_{\geq 0}^d$ by the following rule:
\[
a \prec_{\nu} b \quad\overset{\mathrm{def}}{\Longleftrightarrow}\quad
\text{either $\nu(a) < \nu(b)$, or $\nu(a) = \nu(b)$ and $a \prec_{\mathrm{lex}} b$},
\]
then
$\nu(a) \leq \nu(b)$ for all $a, b \in \ZZ_{\geq 0}^d$ with $a \precsim_{\nu} b$.
Let us begin with the following lemma.

\begin{Lemma}
\label{lem:vol:asym:mu}
If $V_{\bullet}$ contains an ample series and $\avol(\overline{D};V_{\bullet}) > 0$, 
then we have the following:
\begin{enumerate}
\renewcommand{\labelenumi}{(\arabic{enumi})}
\item
$\Theta(\overline{D}; V_{\bullet}) = \Delta(V_{\bullet}^0) = \left\{ x \in \Delta(V_{\bullet}) \mid G_{(\overline{D};V_{\bullet})}(x) \geq 0 \right\}$.

\item
We assume that $\nu$ is given by $\nu(x_1, \ldots, x_d) = x_1 + \cdots + x_r$, where $1 \leq r \leq d$.
We further assume that 
the monomial order $\precsim$ satisfies
the property: $\nu(a) \leq \nu(b)$ for all $a, b \in \ZZ_{\geq 0}^d$ with $a \precsim b$.
Let $B$ is a reduced and irreducible subvariety of $X_{\overline{K}}$ such that $B$ is given by $z_1 = \cdots = z_r = 0$ around $P$.
Then $\mu_{\QQ, B}(\overline{D};V_{\bullet}) = \min \left\{ \nu(x) \mid x \in \Theta(\overline{D}; V_{\bullet}) \right\}$.
\end{enumerate}
\end{Lemma}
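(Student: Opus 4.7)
The strategy is to read everything off the monotonicity of the filtration $\{V_\bullet^t\}_{t \in \RR}$: larger $t$ means a more restrictive bound on $\Vert \cdot \Vert_{m\overline{D}}$, so $V_\bullet^{t'} \subseteq V_\bullet^{t}$ whenever $t \leq t'$, and therefore $\Delta(V_\bullet^{t'}) \subseteq \Delta(V_\bullet^{t})$.

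\medskip
\emph{For} (1), I split the equality into three inclusions. Any $x \in \Delta(V_\bullet)$ with $G_{(\overline{D};V_\bullet)}(x) > 0$ lies in $\Delta(V_\bullet^t)$ for some $t > 0$, hence in $\Delta(V_\bullet^0)$ by monotonicity; since $\Delta(V_\bullet^0)$ is closed I obtain
\[
\Theta(\overline{D};V_\bullet) = \overline{\{G_{(\overline{D};V_\bullet)} > 0\}} \subseteq \Delta(V_\bullet^0).
\]
The inclusion $\Delta(V_\bullet^0) \subseteq \{G_{(\overline{D};V_\bullet)} \geq 0\}$ is direct from the definition of $G$. Finally, the integral formula \eqref{eqn:integral:formula:vol} together with $\avol(\overline{D};V_\bullet) > 0$ forces the existence of some $x_0$ with $G_{(\overline{D};V_\bullet)}(x_0) > 0$; for any $x_1$ with $G_{(\overline{D};V_\bullet)}(x_1) \geq 0$, concavity of $G_{(\overline{D};V_\bullet)}$ gives $G_{(\overline{D};V_\bullet)}(\lambda x_0 + (1-\lambda) x_1) > 0$ for every $\lambda \in (0,1]$, and letting $\lambda \to 0^+$ places $x_1$ in $\Theta(\overline{D};V_\bullet)$. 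The three inclusions close up, proving (1).

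\medskip
\emph{For} (2), the central observation is that under the hypothesis $\nu(a) \leq \nu(b)$ whenever $a \precsim b$, the initial exponent $\ord_{z_P}^{\precsim}(f)$ of any nonzero $f \in \overline{K}\lformal z_1, \ldots, z_d \rformal$ minimises $\nu$ over the exponent support of $f$. Since $B$ is cut out locally by $z_1 = \cdots = z_r = 0$, I have $\mult_B(f) = \min\{\nu(a) : c_a \neq 0\} = \nu(\ord_{z_P}^{\precsim}(f))$, and extending by linearity to $\RR$-Cartier divisors yields
\[
\mult_B\!\left(D + \tfrac{1}{m}(\phi)\right) = \nu\!\left(\mult_{z_P}^{\precsim}\!\left(D + \tfrac{1}{m}(\phi)\right)\right)
\]
for every $\phi \in V_m \cap \aH(X, m\overline{D}) \setminus \{0\}$. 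The set of valuation vectors appearing on the right is, by definition of the Okounkov body and the identification $\Theta(\overline{D};V_\bullet) = \Delta(V_\bullet^0)$ from (1), dense in the compact convex body $\Theta(\overline{D};V_\bullet)$. Since $\nu$ is continuous and linear I conclude
\[
\mu_{\QQ,B}(\overline{D};V_\bullet) = \inf_{m,\phi} \nu\!\left(\mult_{z_P}^{\precsim}\!\left(D + \tfrac{1}{m}(\phi)\right)\right) = \min\{\nu(x) : x \in \Theta(\overline{D};V_\bullet)\}.
\]

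\medskip
The main obstacle is the concavity step in (1): it is what allows me to pass from ``$\{G > 0\}$ is nonempty'' to ``$\{G \geq 0\} \subseteq \overline{\{G > 0\}}$'', and it uses concavity and upper semicontinuity of $G_{(\overline{D};V_\bullet)}$ recorded in \cite[SubSection~1.3]{BC}. The computation in (2) is then essentially bookkeeping with the Okounkov valuation, once the density in $\Delta(V_\bullet^0)$ and the formula $\mult_B = \nu \circ \mult_{z_P}^{\precsim}$ are in hand.
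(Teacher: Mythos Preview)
Your argument for (1) is correct and is exactly the paper's: the chain of inclusions $\{G>0\}\subseteq\Delta(V_\bullet^0)\subseteq\{G\ge 0\}$, upper semicontinuity to close up, and then the concavity trick with a point $x_0$ where $G>0$ (guaranteed by $\avol>0$ and \eqref{eqn:integral:formula:vol}) to show $\{G\ge 0\}\subseteq\Theta$.

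In (2) there is a genuine gap. You assert that the valuation vectors $\mult_{z_P}^{\precsim}(D+\tfrac{1}{m}(\phi))$ for $\phi\in V_m\cap\aH(X,m\overline{D})\setminus\{0\}$ are dense in $\Delta(V_\bullet^0)$, but $\Delta(V_\bullet^0)$ is by definition the closed convex hull of valuation vectors of elements of $V_m^0\otimes_K\overline{K}=\langle V_m\cap\aH(X,m\overline{D})\rangle_{\overline{K}}$, not just of the small sections themselves. A $\overline{K}$-linear combination $\psi=\sum c_\phi\phi$ of small sections can have $\ord_{z_P}^{\precsim}(\psi)$ strictly larger (for $\precsim$) than every $\ord_{z_P}^{\precsim}(\phi)$, so there is no reason the small-section valuation vectors alone should be dense. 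Your density claim would give both inequalities at once, but it is unjustified; what survives without it is only $\min\{\nu(x):x\in\Theta\}\le\mu_{\QQ,B}$ (since each small-section vector lies in $\Theta$ by (1)).

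The missing inequality $\min\{\nu(x):x\in\Theta\}\ge\mu_{\QQ,B}$ is exactly what the paper isolates as a separate claim. Using your own identity $\mult_B=\nu\circ\mult_{z_P}^{\precsim}$ together with the valuation inequality $\ord_B(\sum c_\phi\phi)\ge\min_\phi\ord_B(\phi)$, one gets
\[
\nu\!\left(\mult_{z_P}^{\precsim}\!\left(D+\tfrac{1}{m}(\textstyle\sum c_\phi\phi)\right)\right)
=\mult_B\!\left(D+\tfrac{1}{m}(\textstyle\sum c_\phi\phi)\right)
\ge \min_\phi \mult_B\!\left(D+\tfrac{1}{m}(\phi)\right)\ge \mu_{\QQ,B},
\]
so every generating vector of $\Delta(V_\bullet^0)$ satisfies $\nu\ge\mu_{\QQ,B}$; since $\nu$ is linear this passes to the closed convex hull. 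Replace your density sentence with this computation and the proof is complete.
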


\begin{proof}
(1) Note that 
\[
\left\{ x \in \Delta(V_{\bullet}) \mid G_{(\overline{D};V_{\bullet})}(x) > 0 \right\} \subseteq \Delta(V_{\bullet}^0) \subseteq
\left\{ x \in \Delta(V_{\bullet}) \mid G_{(\overline{D};V_{\bullet})}(x) \geq 0 \right\}
\]
and $\left\{ x \in \Delta(V_{\bullet}) \mid G_{(\overline{D};V_{\bullet})}(x) \geq 0 \right\}$ is closed because
$G_{(\overline{D};V_{\bullet})}$ is upper semicontinuous.
Thus it is sufficient to show that $\left\{ x \in \Delta(V_{\bullet}) \mid G_{(\overline{D};V_{\bullet})}(x) \geq 0 \right\}
\subseteq \Theta(\overline{D}; V_{\bullet})$.
Let $x \in  \Delta(V_{\bullet})$ with
$G_{(\overline{D};V_{\bullet})}(x) \geq 0$. 
As
\[
\avol(\overline{D};V_{\bullet}) = (d+1)! [K : \QQ] \int_{\Theta(\overline{D}; V_{\bullet})} G_{(\overline{D};V_{\bullet})}(x) dx > 0
\]
by \eqref{eqn:integral:formula:vol},
we can choose $y \in  \Theta(\overline{D}; V_{\bullet})$ with
$G_{(\overline{D};V_{\bullet})}(y) > 0$.
Then
\[
G_{(\overline{D};V_{\bullet})}((1-t)x + ty) \geq (1-t)G_{(\overline{D};V_{\bullet})}(x) + t G_{(\overline{D};V_{\bullet})}(y)
\geq t G_{(\overline{D};V_{\bullet})}(y) >0
\]
for all $t \in \RR$ with $0 < t \leq 1$.
Thus $x \in \Theta(\overline{D}; V_{\bullet})$.

(2) First let us see the following claim:

\begin{Claim}
\label{claim:lem:vol:asym:mu:01}
For $L \in \Div(X)_{\RR}$, $\nu\left(\mult_{z_P}^{\precsim}(L)\right) = \mult_B(L)$.
\end{Claim}

\begin{proof}
It is sufficient to see that $\nu\left(\ord_{z_P}^{\precsim}(f)\right) = \ord_B(f)$ for
$f \in \OO_{X_{\overline{K}}} \setminus \{ 0 \}$. 
We set $f = \sum_{\beta \in \ZZ^{d}_{\geq 0}} c_{\beta} z^{\beta}$ and  $\alpha = \ord_{z_P}^{\precsim}(f)$.
Note that $\ord_B(f) = \min \{ \nu(\beta) \mid c_{\beta} \not= 0 \}$. 
Thus the assertion follows because $c_{\alpha} \not= 0$ and
$\nu(\alpha) \leq \nu(\beta)$ for $\beta \in \ZZ_{\geq 0}^d$ with $c_{\beta} \not= 0$.
\end{proof}

If we set 
\[
x_{\phi} = \mult_{z_P}^{\precsim}(D + (1/m)(\phi))
\]
for
$\phi \in V_m \cap \aH(X, m\overline{D}) \setminus \{ 0 \}$ and $m > 0$, 
then $G_{(\overline{D};V_{\bullet})}(x_{\phi}) \geq 0$ by the definition of $G_{(\overline{D};V_{\bullet})}$, and hence,
$x_{\phi} \in \Theta(\overline{D}; V_{\bullet})$ by (1). Therefore, by Claim~\ref{claim:lem:vol:asym:mu:01},
\[
\min \{ \nu(x) \mid x \in \Theta(\overline{D}; V_{\bullet}) \} \leq 
\nu(x_{\phi}) = \mult_{\xi}(D + (1/m)(\phi)),
\]
which implies 
$\min \{ \nu(x) \mid x \in \Theta(\overline{D}; V_{\bullet}) \} \leq \mu_{\QQ, B}(\overline{D};V_{\bullet})$.

\begin{Claim}
\label{claim:lem:vol:asym:mu:02}
\[
\mu_{\QQ, B}(\overline{D};V_{\bullet}) 
\leq \nu\left(\mult_{z_P}^{\precsim} \left( D + (1/m) \left( \sum_{\phi \in V_m \cap \aH(X, m\overline{D}) \setminus \{ 0 \}} c_{\phi} \phi \right) \right)\right),
\]
where $c_{\phi} \in \overline{K}$ and $\sum_{\phi \in V_m \cap \aH(X, m\overline{D}) \setminus \{ 0 \}} c_{\phi} \phi \not= 0$.
\end{Claim}

\begin{proof}
By the property (ii),
\[
\min_{\phi \in V_m \cap \aH(X, m\overline{D}) \setminus \{ 0 \}} \left\{ \ord_{z_P}^{\precsim}(\phi) \right\} \precsim
\ord_{z_P}^{\precsim}\left(\sum_{\phi \in V_m \cap \aH(X, m\overline{D}) \setminus \{ 0 \}} c_{\phi} \phi\right)
\]
on $\ZZ^d$, which yields
\[
\min_{\phi \in V_m \cap \aH(X, m\overline{D}) \setminus \{ 0 \}} \left\{ \nu\left(\ord_{z_P}^{\precsim}(\phi)\right) \right\} \leq
\nu\left(\ord_{z_P}^{\precsim}\left(\sum_{\phi \in V_m \cap \aH(X, m\overline{D}) \setminus \{ 0 \}} c_{\phi} \phi\right)\right),
\]
and hence
\begin{multline*}
\min_{\phi \in V_m \cap \aH(X, m\overline{D}) \setminus \{ 0 \}} \left\{ \nu\left(\mult_{z_P}^{\precsim}(D + (1/m)(\phi))\right) \right\} \\
\leq
\nu\left(\mult_{z_P}^{\precsim}\left(D + (1/m)\left(\sum_{\phi \in V_m \cap \aH(X, m\overline{D}) \setminus \{ 0 \}} c_{\phi} \phi\right)\right)\right).
\end{multline*}
Thus the claim follows by Claim~\ref{claim:lem:vol:asym:mu:01}.
\end{proof}

By the above claim together with (1),
\[
\Theta(\overline{D}; V_{\bullet}) = \Delta(V_{\bullet}^0) \subseteq \{ x \in \Delta(V_{\bullet}) \mid \mu_{\QQ, B}(\overline{D};V_{\bullet}) \leq \nu(x) \},
\]
which shows that $\min \{ \nu(x) \mid x \in \Theta(\overline{D}; V_{\bullet}) \} \geq \mu_{\QQ, B}(\overline{D}; V_{\bullet})$,
as required.
\end{proof}

The following theorem is the main result of this section.

\begin{Theorem}
\label{thm:vol:cvol:mu:zero}
If $V_{\bullet}$ contains an ample series, $\avol(\overline{D};V_{\bullet}) = \acvol(\overline{D};V_{\bullet}) > 0$ and
\[
\inf \left\{ \mult_{\xi}(D + (1/m)(\phi)) \mid m \in \ZZ_{>0}, \ \phi \in V_m  \setminus \{ 0 \}
\right\} = 0
\]
for $\xi \in X_K$,
then $\mu_{\QQ,\xi}(\overline{D}; V_{\bullet}) = 0$.
\end{Theorem}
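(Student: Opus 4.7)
The plan is to transport both the hypothesis and the conclusion to the Okounkov body $\Delta(V_{\bullet})$ using Lemma~\ref{lem:vol:asym:mu} together with the Boucksom--Chen integral formulae \eqref{eqn:integral:formula:vol}--\eqref{eqn:integral:formula:chivol}. The key observation is that the equality $\avol = \acvol$, read through these formulae, forces $\Theta(\overline{D};V_{\bullet}) = \Delta(V_{\bullet})$, after which the vanishing of $\mu_{\QQ,\xi}$ follows at once from the vanishing of the geometric infimum supplied by the hypothesis.

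To set up coordinates, I would pick a geometric irreducible component $B$ of the closure of $\xi$ in $X_{\overline{K}}$ together with a closed point $P$ on the smooth locus of $B$, and a regular system of parameters $z_P = (z_1,\ldots,z_d)$ of $\OO_{X_{\overline{K}},P}$ for which $B$ is cut out near $P$ by $z_1 = \cdots = z_r = 0$, where $r = \codim(\xi)$. With $\nu(x_1,\ldots,x_d) := x_1 + \cdots + x_r$ and the compatible monomial order $\precsim_{\nu}$ introduced just before Lemma~\ref{lem:vol:asym:mu}, part~(2) of that lemma then yields $\mu_{\QQ,\xi}(\overline{D};V_{\bullet}) = \min\{\nu(x) : x \in \Theta(\overline{D};V_{\bullet})\}$, where one uses that $\mult_{\xi}$ agrees with $\mult_B$ on $\RR$-Cartier divisors.

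The core step is to subtract \eqref{eqn:integral:formula:chivol} from \eqref{eqn:integral:formula:vol}, obtaining
\[
\avol(\overline{D};V_{\bullet}) - \acvol(\overline{D};V_{\bullet}) = -(d+1)![K:\QQ] \int_{\Delta(V_{\bullet}) \setminus \Theta(\overline{D};V_{\bullet})} G_{(\overline{D};V_{\bullet})}(x)\, dx.
\]
Part~(1) of Lemma~\ref{lem:vol:asym:mu} shows $G_{(\overline{D};V_{\bullet})} < 0$ on $\Delta(V_{\bullet}) \setminus \Theta(\overline{D};V_{\bullet})$, so the hypothesis $\avol = \acvol$ forces this difference set to have Lebesgue measure zero. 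Because $V_{\bullet}$ contains an ample series, $\Delta(V_{\bullet})$ is a $d$-dimensional convex body in $\RR^d$; the difference set is relatively open in it, and a relatively open subset of a full-dimensional convex body having zero Lebesgue measure must be empty. Hence $\Theta(\overline{D};V_{\bullet}) = \Delta(V_{\bullet})$.

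Finally, the hypothesis on $\xi$ supplies, for every $\epsilon > 0$, a pair $(m,\phi)$ with $\phi \in V_m \setminus \{0\}$ and $\mult_{\xi}(D + (1/m)(\phi)) < \epsilon$; by Claim~\ref{claim:lem:vol:asym:mu:01} the corresponding point $x_{\phi} := \mult_{z_P}^{\precsim}(D + (1/m)(\phi))$ of $\Delta(V_{\bullet})$ satisfies $\nu(x_{\phi}) < \epsilon$. Combined with $\nu \geq 0$ on $\Delta(V_{\bullet}) \subseteq \RR_{\geq 0}^d$, this gives $\min\{\nu(x) : x \in \Delta(V_{\bullet})\} = 0$, and together with the two preceding paragraphs this concludes $\mu_{\QQ,\xi}(\overline{D};V_{\bullet}) = 0$. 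The most delicate part of the argument is the geometric bookkeeping of the first step when the closure of $\xi$ is singular or reducible over $\overline{K}$: one must choose a component $B$ and a smooth $\overline{K}$-point $P$ of $B$ admitting a compatible local coordinate system, and verify that passing from $\xi$ to $B$ preserves both the hypothesis and the arithmetic asymptotic multiplicity in question.
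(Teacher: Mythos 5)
Your proof is correct and follows essentially the same route as the paper's: both arguments subtract the Boucksom--Chen integral formulae to exploit $\avol = \acvol$, show via Lemma~\ref{lem:vol:asym:mu}(1) that $G_{(\overline{D};V_\bullet)} < 0$ on $\Delta(V_\bullet)\setminus\Theta(\overline{D};V_\bullet)$, deduce $\Theta(\overline{D};V_\bullet) = \Delta(V_\bullet)$, and then combine Claim~\ref{claim:lem:vol:asym:mu:01}, the hypothesis on $\xi$, and Lemma~\ref{lem:vol:asym:mu}(2). The only cosmetic difference is in establishing $\Theta = \Delta$: you argue that $\Delta\setminus\Theta$ is relatively open with Lebesgue measure zero and hence empty, whereas the paper fixes an interior point $y$ with $G(y) < 0$, uses upper semicontinuity to find a neighborhood $U$ with $G<0$, and derives a strict inequality of integrals; these are interchangeable formalizations of the same idea, and your flagging of the descent from $\xi\in X_K$ to a geometric component $B\subset X_{\overline K}$ is a fair note on a point the paper leaves implicit.
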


\begin{proof}
First let us consider the following claim:

\begin{Claim}
\label{claim:thm:vol:cvol:mu:zero:01}
$\Theta(\overline{D}; V_{\bullet}) = \Delta(V_{\bullet})$.
\end{Claim}

\begin{proof}
It is sufficient to see that
$\Delta(V_{\bullet})^{\circ} \subseteq \left\{ x \in \Delta(V_{\bullet}) \mid G_{(\overline{D};V_{\bullet})}(x) \geq 0 \right\}$.
We assume the contrary, that is, there is $y \in  \Delta(V_{\bullet})^{\circ}$ with
$G_{(\overline{D};V_{\bullet})}(y) < 0$. Then, by using the upper semicontinuity of $G_{(\overline{D};V_{\bullet})}$,
we can find an open neighborhood $U$ of $y$ such that
$U \subseteq \Delta(V_{\bullet})^{\circ}$ and $G_{(\overline{D};V_{\bullet})}(x) < 0$ for all $x \in U$.
Then, as $\Theta(\overline{D}; V_{\bullet}) \subseteq  \Delta(V_{\bullet}) \setminus U$,
by the integral formulae of $\avol$ and $\acvol$ 
(cf. \eqref{eqn:integral:formula:vol}, \eqref{eqn:integral:formula:chivol}) and (1) in Lemma~\ref{lem:vol:asym:mu},
\begin{align*}
\frac{\acvol(\overline{D};V_{\bullet})}{(d+1)! [K : \QQ]} & = \int_{ \Delta(V_{\bullet})} G_{(\overline{D};V_{\bullet})}(x) dx =
 \int_{U} G_{(\overline{D};V_{\bullet})}(x) dx +  \int_{ \Delta(V_{\bullet}) \setminus U } G_{(\overline{D};V_{\bullet})}(x) dx \\
 & <
 \int_{ \Delta(V_{\bullet}) \setminus U } G_{(\overline{D};V_{\bullet})}(x) dx 
\leq  \int_{\Theta(\overline{D}; V_{\bullet})} G_{(\overline{D};V_{\bullet})}(x) dx = \frac{\avol(\overline{D};V_{\bullet})}{(d+1)! [K : \QQ]}.
\end{align*}
This is a contradiction.
\end{proof}

Let $B$ be the Zariski closure of $\{ \xi \}$ in $X$.
We choose $P \in X(\overline{K})$ and a local system of parameters $z_P = (z_1, \ldots, z_d)$ at $P$
such that $P$ is a regular point of $B_{\overline{K}}$ and 
$z_1 = \cdots = z_r = 0$ is a local equation of $B_{\overline{K}}$ at $P$.
Let $\nu : \RR^d \to \RR$ be the linear map given by
$\nu(x_1, \ldots, x_d) = x_1 + \cdots + x_r$.
We also choose a monomial order
$\precsim$ such that $\nu(a) \leq \nu(b)$ for all $a, b \in \ZZ_{\geq 0}^d$ with
$a \precsim b$.
By our assumption,
\[
\inf \left\{ \mult_{\xi}(D + (1/m)(\phi)) \mid m \in \ZZ_{>0}, \ \phi \in V_m  \setminus \{ 0 \}
\right\} = 0.
\]
This means that $\min \{ \nu(x) \mid x \in \Delta(V_{\bullet}) \} = 0$, 
and hence, by Claim~\ref{claim:thm:vol:cvol:mu:zero:01}
and (2) in Lemma~\ref{lem:vol:asym:mu},
\[
\mu_{\QQ, \xi}(\overline{D};V_{\bullet}) = \min \{ \nu(x) \mid x \in \Theta(\overline{D}; V_{\bullet}) \} = 0.
\]
\end{proof}

\begin{Corollary}
\label{cor:vol:cvol:mu:zero}
If $D_K$ is nef and big on the generic fiber $X_{K}$ and
$\avol(\overline{D}) = \acvol(\overline{D}) > 0$, then
$\mu_{\QQ, \xi}(\overline{D}) = 0$ for all $\xi \in X_{K}$.
\end{Corollary}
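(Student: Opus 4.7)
The plan is to apply Theorem~\ref{thm:vol:cvol:mu:zero} to the complete graded algebra
\[
V_{\bullet} := \bigoplus_{m \geq 0} H^0(X_K, m D_K) = R(D_K).
\]
With this choice, $V_m = H^0(X_K, m D_K)$ for every $m$, and so by the remarks following the definitions of the invariants in Conventions and terminology~\ref{CT:volume}, one has $\avol(\overline{D}; V_\bullet) = \avol(\overline{D})$, $\acvol(\overline{D}; V_\bullet) = \acvol(\overline{D})$, and $\mu_{\QQ, \xi}(\overline{D}; V_\bullet) = \mu_{\QQ, \xi}(\overline{D})$ for every $\xi \in X_K$. Bigness of $D_K$ ensures that $V_\bullet$ contains an ample series, and the equality $\avol(\overline{D};V_\bullet) = \acvol(\overline{D};V_\bullet) > 0$ is the hypothesis of the corollary.

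Thus the whole proof reduces to verifying the remaining hypothesis of Theorem~\ref{thm:vol:cvol:mu:zero}, namely
\[
\inf\left\{\mult_\xi\bigl(D + (1/m)(\phi)\bigr) \;\middle|\; m \in \ZZ_{>0},\ \phi \in V_m \setminus \{0\}\right\} = 0 \qquad (\forall\, \xi \in X_K).
\]
Since $\xi$ lies on the generic fiber, the vertical part of $D$ does not contribute, so this infimum equals the purely geometric asymptotic multiplicity of the $\RR$-divisor $D_K$ on $X_K$ at $\xi$. I would establish its vanishing by exploiting the nefness of $D_K$: fixing an ample Cartier divisor $A_K$ on $X_K$, the $\RR$-divisor $D_K + (1/n) A_K$ is ample for every $n \in \ZZ_{>0}$, hence a suitable positive integer multiple is base-point free, so its geometric asymptotic multiplicity at $\xi$ is $0$. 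The geometric asymptotic multiplicity being continuous on the big cone of $X_K$ (equivalently, the classical fact that the diminished base locus of any nef $\RR$-divisor on a smooth projective variety is empty, due to Ein--Lazarsfeld--Musta\c{t}\u{a}--Nakamaye--Popa), one then lets $n \to \infty$ to conclude that the asymptotic multiplicity of $D_K$ itself at $\xi$ vanishes.

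With all three hypotheses in place, Theorem~\ref{thm:vol:cvol:mu:zero} yields $\mu_{\QQ, \xi}(\overline{D}; V_\bullet) = 0$, and by the identification in the first paragraph this equals $\mu_{\QQ, \xi}(\overline{D})$, which is the desired conclusion. The only nontrivial step is the geometric limit argument in the second paragraph; it is standard but is the unique place where the nefness of $D_K$ (as opposed to mere bigness) is actually used, and so it is the main obstacle in the plan.
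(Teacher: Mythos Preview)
Your proposal is correct and follows essentially the same route as the paper: both apply Theorem~\ref{thm:vol:cvol:mu:zero} to the full graded ring $V_\bullet = R(D_K)$ and reduce everything to the geometric vanishing of the asymptotic multiplicity of the nef and big $\RR$-divisor $D_K$ at $\xi$. The paper dispatches that geometric step by citing \cite[Proposition~6.5.3]{MoArZariski}, whereas you sketch the standard ample-perturbation/continuity argument (equivalently, $B_{-}(D_K)=\emptyset$ for nef $D_K$); these are the same fact, so the two proofs coincide in substance.
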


\begin{proof}
As $D_{K}$ is nef and big, in the similar way as \cite[Proposition~6.5.3]{MoArZariski},
for any $\epsilon > 0$, there is $\phi \in \Rat(X_{K})^{\times}_{\QQ}$ such that
\[
D_K + (\phi)_{\QQ} \geq 0\quad\text{and}\quad
\mult_{\xi}(D_K + (\phi)_{\QQ}) < \epsilon,
\]
which means that
\[
\inf \left\{ \mult_{\xi}(D + (1/m)(\phi)) \mid m \in \ZZ_{>0}, \ \phi \in H^0(X_K, mD_K)  \setminus \{ 0 \}
\right\} = 0.
\]
Thus the corollary follows from Theorem~\ref{thm:vol:cvol:mu:zero}.
\end{proof}

\section{Equality condition for the generalized Hodge index theorem}

Here let us give the proof of the main theorem of this paper.
We assume that $d=1$.
Let us begin with the following two lemmas.

\begin{Lemma}
\label{lem:nef:deg:zero}
We assume that  $X$ is regular.
For an integrable arithmetic $\RR$-Cartier divisor $\overline{D}$ of $C^0$-type on $X$
\rom{(}cf. Conventions and terminology~\rom{\ref{CT:Div:Nef}}\rom{)},
we have the following:
\begin{enumerate}
\renewcommand{\labelenumi}{(\arabic{enumi})}
\item
We assume that $\deg(D_K) = 0$.
Then $\adeg(\overline{D}^2) = 0$ if and only if
$\overline{D} = \widehat{(\psi)}_{\RR} + (0, \lambda)$
for some $\psi \in \Rat(X)^{\times}_{\RR}$ and $\lambda \in \RR$.

\item
The following are equivalent:
\begin{enumerate}
\renewcommand{\labelenumii}{(\alph{enumii})}
\item
$\deg(D_K) = 0$ and $\overline{D}$ is nef.

\item
$\deg(D_K) = 0$, $\overline{D}$ is pseudo-effective and $\adeg(\overline{D}^2) = 0$.
\end{enumerate}
\end{enumerate}
\end{Lemma}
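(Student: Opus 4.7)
The plan is to derive (1) from the generalized Hodge index inequality together with Theorem~\ref{thm:cvol:self:ineq} and the classical arithmetic Hodge index theorem of Faltings and Hriljac, and to deduce (2) from (1) by a case analysis driven by pseudo-effectivity. The ``if'' direction of (1) is the easy half: expanding $\adeg(\overline{D}^2)$ bilinearly using $\overline{D} = \widehat{(\psi)}_\RR + (0,\lambda)$, the arithmetic product formula kills all contributions involving the principal part $\widehat{(\psi)}_\RR$, and $\adeg((0,\lambda)^2) = (1/2)\int_{X(\CC)} \lambda\,dd^c(\lambda) = 0$ since $\lambda$ is locally constant.

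For the ``only if'' of (1), I would first observe that $\deg(D_K) = 0$ forces $\avol(\overline{D}) = 0$, because $\log\#\aH(X, m\overline{D})$ grows at most linearly in $m$ when $D_K$ has degree $0$. Combined with the hypothesis $\adeg(\overline{D}^2) = 0$ and the chain $\adeg(\overline{D}^2) \leq \acvol(\overline{D}) \leq \avol(\overline{D})$ coming from Theorem~\ref{thm:cvol:self:ineq} and the general inequality $\acvol \leq \avol$, this squeezes $\acvol(\overline{D}) = 0$; the equality clause of Theorem~\ref{thm:cvol:self:ineq} then yields that $\overline{D}$ is relatively nef. Since $c_1(\overline{D})$ is now a positive current of total mass $[K:\QQ]\deg(D_K) = 0$, it must vanish identically, so $g$ is an admissible Green function in the sense of Arakelov.

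Next I would invoke the Faltings--Hriljac Hodge index theorem, extended by $\RR$-linearity and by $C^0$-continuity from the classical $C^\infty$, $\QQ$-Cartier version, to identify $\adeg(\overline{D}^2)$ with $-[K:\QQ]$ times the N\'eron--Tate height of the class $[D_K] \in \mathrm{Pic}^0(X_K)\otimes\RR$. Positive definiteness of the N\'eron--Tate form then forces $[D_K] = 0$, so $D_K = (\psi|_{X_K})_\RR$ for some $\psi \in \Rat(X)^\times_\RR$. Subtracting $\widehat{(\psi)}_\RR$ reduces the problem to the case where $D$ is vertical with $c_1(\overline{D}) = 0$ and $\adeg(\overline{D}^2) = 0$: the Green function is then harmonic on the compact Riemann surface $X(\CC)$, hence locally constant; meanwhile $D$ is $\pi$-nef vertical with $(D)^2 = 0$, so Zariski's lemma applied fiberwise forces $D = \sum_P c_P \pi^{-1}(P)$. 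Since $\mathrm{Cl}(O_K)$ is finite, every such real combination is a principal $\RR$-divisor $(\phi)_\RR$ on $X$; subtracting $\widehat{(\phi)}_\RR$ leaves $(0,\mu)$ with $dd^c(\mu) = 0$, making $\mu$ locally constant. Assembling the three corrections yields $\overline{D} = \widehat{(\psi\phi)}_\RR + (0,\mu)$.

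For (2), the implication (a) $\Rightarrow$ (b) is immediate: nef divisors are pseudo-effective by definition, and for nef $\overline{D}$ one has $\adeg(\overline{D}^2) = \avol(\overline{D})$, which equals $0$ because $\deg(D_K) = 0$. For (b) $\Rightarrow$ (a), I would apply (1) to write $\overline{D} = \widehat{(\psi)}_\RR + (0,\lambda)$ with $\lambda$ locally constant: its first Chern current vanishes (hence is trivially positive), and its intersection with any horizontal curve $C$ reduces to the integral of $\lambda$ over $C(\CC)$. If $\lambda$ were negative on some connected component of $X(\CC)$, then $\overline{D} + \epsilon\overline{A}$ would fail to be big for sufficiently small ample $\overline{A}$, contradicting pseudo-effectivity; hence $\lambda \geq 0$ everywhere and $\overline{D}$ is nef. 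The main obstacle throughout is the third paragraph: carefully extending Faltings--Hriljac to $C^0$-type $\RR$-Cartier divisors and justifying the reduction to the admissible case; all remaining ingredients are direct consequences of Theorem~\ref{thm:cvol:self:ineq}, the fiberwise Zariski lemma, and the maximum principle for subharmonic functions.
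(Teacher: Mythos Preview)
Your argument for (1) stops one step short of the stated conclusion: you end with $\overline{D} = \widehat{(\psi\phi)}_{\RR} + (0,\mu)$ where $\mu$ is merely \emph{locally constant} on $X(\CC)$, but the lemma asserts $\lambda \in \RR$, i.e.\ a single real constant. The missing ingredient is Dirichlet's unit theorem. The paper's proof handles this explicitly: write $\mu$ as its average $\lambda := \frac{1}{[K:\QQ]}\sum_{\sigma}\mu_{\sigma}$ plus a trace-zero $F_{\infty}$-invariant locally constant function $\xi$; since $\sum_{\sigma}\xi_{\sigma} = 0$, Dirichlet gives $u \in O_K^{\times}\otimes\RR$ with $\widehat{(u)}_{\RR} = (0,\xi)$, and one absorbs $u$ into $\psi$. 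Incidentally, the paper reaches the locally constant stage in one line by citing the already-established equality case of the Hodge index theorem for integrable $C^0$-type $\RR$-divisors (\cite[Theorem~2.2.3]{MoD}); your detour through Theorem~\ref{thm:cvol:self:ineq} to first force $c_1(\overline{D}) = 0$ is valid but unnecessary, since that cited result does not require relative nefness as a hypothesis.

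This gap propagates into an actual error in your argument for (2)(b)$\Rightarrow$(a). You apply (1) but then treat $\lambda$ as locally constant and claim that $\lambda_{\sigma} < 0$ on some component $X_{\sigma}$ would contradict pseudo-effectivity. This is false: if $\lambda$ is locally constant with $\sum_{\sigma}\lambda_{\sigma} \geq 0$, then by the Dirichlet step above $(0,\lambda) = \widehat{(u)}_{\RR} + (0,\bar{\lambda})$ with $\bar{\lambda} \geq 0$ a constant, so $(0,\lambda)$ is nef and hence pseudo-effective even if some $\lambda_{\sigma} < 0$. Once (1) is correctly stated with $\lambda \in \RR$ a genuine constant, the paper's argument is cleaner than yours: pair $\overline{D}$ with an ample $\overline{A}$ and use that pseudo-effective divisors intersect nef divisors non-negatively to get
\[
0 \leq \adeg(\overline{A}\cdot\overline{D}) = \adeg\bigl(\overline{A}\cdot(0,\lambda)\bigr) = \frac{\lambda\,[K:\QQ]\deg(A_K)}{2},
\]
hence $\lambda \geq 0$ and $\overline{D} = \widehat{(\psi)}_{\RR} + (0,\lambda)$ is nef.
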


\begin{proof}
(1) First we assume that $\adeg(\overline{D}^2) = 0$.
By \cite[Theorem~2.2.3, Remark~2.2.4]{MoD},
there are $\phi \in \Rat(X)^{\times}_{\RR}$ and an $F_{\infty}$-invariant locally constant 
real valued function
$\eta$ on $X(\CC)$ such that
$\overline{D} = \widehat{(\phi)}_{\RR} + (0, \eta)$.
Let $K(\CC)$ be the set of all embeddings $\sigma : K \hookrightarrow \CC$.
For each $\sigma \in K(\CC)$, we set $X_{\sigma} = X \times^{\sigma}_{\Spec(O_K)} \Spec(\CC)$, where 
$\times^{\sigma}_{\Spec(O_K)}$ means
the fiber product with respect to $\sigma : K \hookrightarrow \CC$.
Note that $\{ X_{\sigma} \}_{\sigma \in K(\CC)}$ gives rise to all connected components of $X(\CC)$.
Let $\eta_{\sigma}$ be the value of $\eta$ on $X_{\sigma}$.
We set
\[
\lambda = \frac{1}{[K : \QQ]} \sum_{\sigma \in K(\CC)} \eta_{\sigma}\quad\text{and}\quad
\xi = \eta - \lambda.
\]
Then $\xi_{\bar{\sigma}} = \xi_{\sigma}$ for all $\sigma \in K(\CC)$
and $\sum_{\sigma \in K(\CC)} \xi_{\sigma} = 0$.
Thus, by Dirichlet's unit theorem,
there is $u \in O_K^{\times} \otimes \RR$ such that $\widehat{(u)}_{\RR} = (0, \xi)$.
Therefore, we have 
\[
\overline{D} = \widehat{(\phi u)}_{\RR} + (0, \lambda).
\]
The converse is obvious.

\medskip
(2) (a) $\Longrightarrow$ (b) follows from the non-negativity of $\adeg(\overline{D}^2)$
(\cite[Proposition~6.4.2]{MoArZariski}, \cite[SubSection~2.1]{MoD}) and 
the Hodge index theorem (\cite[Theorem~2.2.3]{MoD}).
Let us show that (b) $\Longrightarrow$ (a).
By (1), 
$\overline{D} = \widehat{(\psi)}_{\RR} + (0, \lambda)$
for some $\psi \in \Rat(X)^{\times}_{\RR}$ and $\lambda \in \RR$.
Let $\overline{A}$ be an ample arithmetic Cartier divisor of $C^{\infty}$-type.
Then, as $\overline{D}$ is pseudo-effective,
\[
0 \leq \adeg(\overline{A} \cdot \overline{D}) = \frac{\lambda [K : \QQ] \deg(A_K)}{2},
\]
and hence $\lambda \geq 0$, as required.
\end{proof}

\begin{Lemma}
\label{lem:vol:formula:epsilon}
In this lemma, $X$ is not necessarily an arithmetic surface, that is,
$X$ is a $(d+1)$-dimensional, generically smooth, normal and projective arithmetic variety.
Let $\overline{D}$ be an arithmetic $\RR$-Cartier divisor of $C^0$-type on $X$.
Then,
\[
\avol(\overline{D}) \leq
\avol(\overline{D} + (0, \epsilon)) \leq \avol(\overline{D}) + \frac{\epsilon (d+1)[K : \QQ] \vol(D_K)}{2}
\]
for $\epsilon \in \RR_{\geq 0}$.
\end{Lemma}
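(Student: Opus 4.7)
The first inequality follows immediately from $\overline{D} \leq \overline{D}+(0,\epsilon)$ (since $\epsilon \geq 0$), which gives $\aH(X,m\overline{D}) \subseteq \aH(X,m(\overline{D}+(0,\epsilon)))$ for every $m \geq 0$, hence $\avol(\overline{D}) \leq \avol(\overline{D}+(0,\epsilon))$ upon taking logarithms and the limsup.

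For the upper bound, my plan is first to handle the case where $D_K$ is big on $X_K$ and then reduce the general case to it by a perturbation argument. When $D_K$ is big, the full graded algebra $V_\bullet := \bigoplus_m H^0(X_K, mD_K)$ contains an ample series, so Boucksom–Chen's integral formula \eqref{eqn:integral:formula:vol} applies. The crucial identity is
\[
\Vert \phi \Vert_{m(\overline{D}+(0,\epsilon))} = e^{-m\epsilon/2}\,\Vert \phi \Vert_{m\overline{D}},
\]
which, by the definition of $V_\bullet^{t}$, implies that the $t$-filtration for $\overline{D}+(0,\epsilon)$ equals the $(t-\epsilon/2)$-filtration for $\overline{D}$. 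Consequently the concave transforms satisfy $G_{(\overline{D}+(0,\epsilon);V_\bullet)} = G_{(\overline{D};V_\bullet)} + \epsilon/2$ on the common Okounkov body $\Delta(V_\bullet)$ (which depends only on the underlying $D$), and in particular $\Theta(\overline{D};V_\bullet) \subseteq \Theta(\overline{D}+(0,\epsilon);V_\bullet)$.

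Writing $G := G_{(\overline{D};V_\bullet)}$, $\Theta := \Theta(\overline{D};V_\bullet)$, and $\Theta' := \Theta(\overline{D}+(0,\epsilon);V_\bullet)$, formula \eqref{eqn:integral:formula:vol} then gives
\[
\frac{\avol(\overline{D}+(0,\epsilon)) - \avol(\overline{D})}{(d+1)!\,[K:\QQ]} = \int_{\Theta'\setminus\Theta}\left(G+\tfrac{\epsilon}{2}\right)dx + \frac{\epsilon}{2}\,\vol(\Theta).
\]
On $\Theta'\setminus\Theta$ one has $G\leq 0$ (else $x \in \Theta$), so the integrand is at most $\epsilon/2$, and the right-hand side is bounded by $(\epsilon/2)\,\vol(\Theta') \leq (\epsilon/2)\,\vol(\Delta(V_\bullet)) = (\epsilon/2)\,\vol(D_K)/d!$, yielding the desired bound after multiplying by $(d+1)!\,[K:\QQ]$.

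For the general case, choose any ample arithmetic Cartier divisor $\overline{A}$ of $C^\infty$-type; then $D_K + \delta A_K$ is big for every $\delta > 0$, so the previous step applied to $\overline{D}+\delta\overline{A}$ yields
\[
\avol(\overline{D}+\delta\overline{A}+(0,\epsilon)) \leq \avol(\overline{D}+\delta\overline{A}) + \frac{(d+1)[K:\QQ]\vol(D_K+\delta A_K)\epsilon}{2},
\]
and letting $\delta \to 0^+$ — using the continuity of $\avol$ on $\aDiv_{C^0}(X)_{\RR}$ (cf.\ \cite{MoArZariski}) and of $\vol$ on $\Div(X_K)_{\RR}$ — gives the inequality in full generality. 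The principal technical point is the identification of the two concave transforms as a vertical shift of one another, which reduces the entire estimate to an elementary integral computation; the perturbation step is routine.
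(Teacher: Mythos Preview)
Your proof is correct, but it takes a considerably heavier route than the paper's. The paper argues directly at the level of the sets $\aH(X,m\overline{D})$: from the same norm relation $\Vert\cdot\Vert_{m(\overline{D}+(0,\epsilon))} = e^{-m\epsilon/2}\Vert\cdot\Vert_{m\overline{D}}$ that you use, a standard lattice--point estimate (\cite[(3) in Proposition~2.1]{MoCont}) bounds
\[
\log\#\aH(X,m(\overline{D}+(0,\epsilon))) - \log\#\aH(X,m\overline{D})
\]
by $\tfrac{m\epsilon}{2}[K:\QQ]\dim_K H^0(X_K,mD_K)$ plus a term of order $O(\log m)\cdot\dim_K H^0(X_K,mD_K)$. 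Dividing by $m^{d+1}/(d+1)!$ and taking the $\limsup$ gives the inequality in one stroke, with no hypothesis on $D_K$ and no perturbation.

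Your approach via the Boucksom--Chen formula is more conceptual: the identity $G_{(\overline{D}+(0,\epsilon);V_\bullet)} = G_{(\overline{D};V_\bullet)}+\epsilon/2$ makes the inequality geometrically transparent as a bound on the growth of superlevel sets of a concave function. The cost is that the integral formula needs $V_\bullet$ to contain an ample series, forcing you to treat the non-big case separately. Incidentally, the continuity detour is unnecessary there: if $D_K$ is not big then $H^0(X_K,mD_K)$ has subpolynomial growth, so $\avol(\overline{D}) = \avol(\overline{D}+(0,\epsilon)) = 0 = \vol(D_K)$ and the inequality is trivial. Either way your argument goes through; the paper's proof is simply shorter and more elementary.
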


\begin{proof}
The first inequality is obvious.
Note that $\Vert\cdot\Vert_{m(\overline{D} + (0,\epsilon))} = e^{-\frac{m\epsilon}{2}} \Vert\cdot\Vert_{m\overline{D}}$
for all $m \geq 0$. Thus, by using \cite[(3) in Proposition~2.1]{MoCont}, there is a constant $C$ such that
\begin{multline*}
\frac{\log \# \aH(X, m (\overline{D} + (0, \epsilon)))}{m^{d+1}/(d+1)!} \leq
\frac{\log \# \aH(X, m \overline{D})}{m^{d+1}/(d+1)!} \\
+ \frac{\epsilon(d+1)[K : \QQ]}{2} \frac{\dim_K H^0(X_K, mD_K)}{m^d/d!}
+ C \frac{\log m}{m}
\end{multline*}
holds for $m \gg 1$.
Thus the second inequality follows.
\end{proof}

The following theorem is the main result of this paper.

\begin{Theorem}
\label{thm:equal:cond:gen:Hodge}
Let $\overline{D}$ be an integrable arithmetic $\RR$-Cartier divisor of $C^0$-type on $X$
with $\deg(D_K) > 0$. Then
$\adeg (\overline{D}^2) = \avol(\overline{D})$ if and only if
$\overline{D}$ is nef.
\end{Theorem}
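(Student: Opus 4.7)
The ``if'' direction is cited in the introduction, so assume $\adeg(\overline{D}^2)=\avol(\overline{D})$ with $\deg(D_K)>0$ and aim to prove $\overline{D}$ is nef. The chain $\adeg(\overline{D}^2)\leq\acvol(\overline{D})\leq\avol(\overline{D})$ of Theorem~\ref{thm:cvol:self:ineq} forces equality throughout; the first of these equalities, via the equality case of that theorem, says $\overline{D}$ is relatively nef. Pulling back by a desingularization preserves $\adeg$, $\avol$, $\acvol$ and the nefness property (via the projection formula), so we may assume $X$ is regular; only the non-negativity of $\adeg(\rest{\overline{D}}{C})$ for horizontal reduced irreducible curves $C\subseteq X$ remains to be checked. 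A perturbation argument further reduces the problem to the case $\avol(\overline{D})>0$: setting $\overline{D}_{\epsilon}:=\overline{D}+(0,\epsilon)$ for $\epsilon>0$, a direct intersection computation using $c_1((0,1))=0$ gives $\adeg(\overline{D}_{\epsilon}^{2})=\adeg(\overline{D}^{2})+\epsilon[K:\QQ]\deg(D_K)$, which is strictly positive since $\adeg(\overline{D}^2)=\avol(\overline{D})\geq 0$. Combining Lemma~\ref{lem:vol:formula:epsilon} with Theorem~\ref{thm:cvol:self:ineq} forces $\adeg(\overline{D}_{\epsilon}^{2})=\avol(\overline{D}_{\epsilon})>0$. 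Since $c_1(\overline{D}_{\epsilon})=c_1(\overline{D})$ and $\adeg(\overline{D}\cdot C)=\adeg(\overline{D}_{\epsilon}\cdot C)-\epsilon\,\adeg((0,1)\cdot C)$, nefness of every $\overline{D}_{\epsilon}$ implies nefness of $\overline{D}$ upon letting $\epsilon\to 0$.

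So assume $\avol(\overline{D})>0$. Because $\deg(D_K)>0$ and $X_K$ is a smooth geometrically connected projective curve, $D_K$ is ample, hence nef and big, and Corollary~\ref{cor:vol:cvol:mu:zero} (using $\avol(\overline{D})=\acvol(\overline{D})>0$) yields $\mu_{\QQ,\xi}(\overline{D})=0$ for every $\xi\in X_K$. Since $\overline{D}$ is big and $X$ regular, invoke the arithmetic Zariski decomposition of \cite{MoArZariski}: $\overline{D}=\overline{P}+\overline{N}$ with $\overline{P}$ nef, $\overline{N}$ effective, and $\avol(\overline{P})=\avol(\overline{D})$. The standard base-divisor property of this decomposition (every $\phi\in\widehat{\Gamma}^{\times}_{\QQ}(X,\overline{D})$ satisfies $D+(\phi)_{\QQ}\geq N$) gives $\mult_{\xi}(N)\leq\mu_{\QQ,\xi}(\overline{D})$; evaluated at the generic point of any horizontal irreducible curve $C$, this forces $\mult_{C}(N)=0$, so $N$ is vertical. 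Moreover, from $\adeg(\overline{P}^{2})=\avol(\overline{P})=\avol(\overline{D})=\adeg(\overline{D}^{2})$ and bilinearity,
\[
2\adeg(\overline{P}\cdot\overline{N})+\adeg(\overline{N}^{2})=0.
\]

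Writing $\overline{N}=(N,g_N)$ with $N$ vertical effective, we have $N(\CC)=\emptyset$, so the mixed term $\adeg((N,0)\cdot(0,g_N))$ vanishes and
\[
\adeg(\overline{N}^{2})=\adeg((N,0)^{2})+\frac{1}{2}\int_{X(\CC)}g_{N}\,dd^{c}g_{N}.
\]
Zariski's lemma applied fiber by fiber yields $\adeg((N,0)^{2})\leq 0$ with equality iff each $N_{P}$ is a rational multiple of the full fiber $\pi^{-1}(P)$, and \cite[Proposition~1.2.3, Proposition~2.1.1]{MoD} yields $\int g_{N}\,dd^{c}g_{N}\leq 0$ with equality iff $g_N$ is locally constant. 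Since $\adeg(\overline{P}\cdot\overline{N})\geq 0$ (nef times effective) and the total sum is zero, both summands must vanish: $N=\sum_{P}c_{P}\pi^{-1}(P)$ with $c_{P}\geq 0$, and $g_N$ is locally constant with non-negative values on each connected component of $X(\CC)$. Each $(\pi^{-1}(P),0)$ is the pullback under $\pi$ of the nef arithmetic divisor $(P,0)$ on $\Spec(O_K)$, and each $(0,c)$ with $c\geq 0$ is nef on $X$, so $\overline{N}$ itself is nef, and therefore $\overline{D}=\overline{P}+\overline{N}$ is a sum of nef arithmetic divisors and is nef. The main obstacles are the sign analysis of $\adeg(\overline{N}^{2})$ into its geometric (Zariski's lemma) and archimedean (concavity of Green functions) contributions, and the recognition that the resulting rigid form of $\overline{N}$ is itself nef on $X$.
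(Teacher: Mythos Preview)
Your overall architecture matches the paper's: reduce to $X$ regular, use the chain $\adeg(\overline{D}^2)\leq\acvol(\overline{D})\leq\avol(\overline{D})$ to force relative nefness, reduce to the big case via the $(0,\epsilon)$-perturbation, then invoke Corollary~\ref{cor:vol:cvol:mu:zero} and the Zariski decomposition $\overline{D}=\overline{P}+\overline{N}$ to see that $N$ is vertical. Up to here the argument is correct and essentially the paper's.

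The divergence, and the gap, is in what you do after $N$ is vertical. You write
\[
2\adeg(\overline{P}\cdot\overline{N})+\adeg(\overline{N}^2)=0,
\]
observe $\adeg(\overline{P}\cdot\overline{N})\geq 0$ and $\adeg(\overline{N}^2)\leq 0$, and then assert ``both summands must vanish.'' That inference is invalid: $2a+b=0$ with $a\geq 0$, $b\leq 0$ only gives $b=-2a$, not $a=b=0$. To force $\adeg(\overline{P}\cdot\overline{N})=0$ you need the $\epsilon$-scaling trick (apply the generalized Hodge index theorem to $\overline{P}+\epsilon\overline{N}$ for all $0<\epsilon\leq 1$, as in Lemma~\ref{lem:perpendicular:positive:negative}); only then does $\adeg(\overline{N}^2)=0$ follow, and with it the splitting into the Zariski-lemma part and the archimedean part.

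More to the point, this whole detour is unnecessary. Once $N$ is vertical, the paper finishes in one line: for any horizontal reduced irreducible curve $C$ one has $C\not\subseteq\Supp(N)$, hence $\adeg(\rest{\overline{N}}{C})\geq 0$ (restriction of an effective arithmetic divisor to a curve not in its support), and $\adeg(\rest{\overline{P}}{C})\geq 0$ by nefness of $\overline{P}$, so $\adeg(\rest{\overline{D}}{C})\geq 0$. Combined with the relative nefness already established, $\overline{D}$ is nef. Your attempt to show that $\overline{N}$ itself is nef is a stronger (and, as written, unproved) statement than what is needed.
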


\begin{proof}
Let $\nu : X' \to X$ be a desingularization of $X$ (cf. \cite{Lip}).
Then $\adeg(\nu^*(\overline{D})^2) = \adeg(\overline{D}^2)$ and
$\avol(\nu^*(\overline{D})) = \avol(\overline{D})$.
Moreover, 
$\nu^*(\overline{D})$ is nef if and only if
$\overline{D}$ is nef. Therefore, we may assume that $X$ is regular.

By \cite[Corollary~5.5]{MoCont} and
\cite[Proposition-Definition~6.4.1]{MoArZariski},
if $\overline{D}$ is nef, then $\adeg (\overline{D}^2) = \avol(\overline{D})$,
so that we need to show that if $\adeg (\overline{D}^2) = \avol(\overline{D})$,
then $\overline{D}$ is nef.

First we assume that $\overline{D}$ is big.
Note that 
\[
\adeg(\overline{D}^2) \leq \acvol(\overline{D}) \leq  \avol(\overline{D}).
\]
Thus, by Theorem~\ref{thm:cvol:self:ineq} and Corollary~\ref{cor:vol:cvol:mu:zero},
$\overline{D}$ is relatively nef and
$\mu_{\RR, \xi}(\overline{D}) = 0$ for $\xi \in X_{K}$.
By \cite[Theorem~9.2.1]{MoArZariski},
there is a greatest element $\overline{P}$ 
of $\Upsilon(\overline{D})$ (cf. Conventions and terminology~\ref{CT:positivity:arithmetic:divisors}).
If we set $\overline{N} := \overline{D} - \overline{P}$, then
$\overline{D} = \overline{P} + \overline{N}$ is a Zariski decomposition of $\overline{D}$ 
(cf. Proposition~\ref{prop:small:sec:D:P}).
Then, 
by \cite[Claim~9.3.5.1]{MoArZariski} or \cite[Theorem~4.1.1]{MoArLinB},
\[
\mult_{\xi}(N) = \mu_{\RR, \xi}(\overline{D}) = 0
\]
for all $\xi \in X_K$,
which implies that $N$ is vertical.
In particular, $\adeg(\rest{\overline{D}}{C}) \geq 0$ for all horizontal reduced and irreducible $1$-dimensional closed subschemes $C$ on $X$, and
hence $\overline{D}$ is nef because $\overline{D}$ is relatively nef.

Next we assume that $\overline{D}$ is not big.
Then $\adeg(\overline{D}^2) = \avol(\overline{D}) = 0$.
Thus, for $\epsilon \in \RR_{>0}$,
\[
\epsilon[K : \QQ]
\deg(D_K) = 
\adeg((\overline{D} + (0, \epsilon))^2)
\leq \avol(\overline{D} + (0, \epsilon)) \leq \epsilon[K : \QQ]
\deg(D_K)
\]
by the generalized Hodge index theorem (cf. Theorem~\ref{thm:cvol:self:ineq}) and 
Lemma~\ref{lem:vol:formula:epsilon}, and hence $\overline{D} + (0, \epsilon)$ is big and
$\adeg((\overline{D} + (0, \epsilon))^2) =
\avol(\overline{D} + (0, \epsilon))$. Therefore, by the previous observation,
$\overline{D} + (0, \epsilon)$ is nef for all $\epsilon \in \RR_{>0}$,
which means that $\overline{D}$ is nef.
\end{proof}

As a corollary of the above theorem, we have the following:

\begin{Corollary}
\label{cor:characterization:nef:ar:div}
Let $\overline{D}$ be an integrable arithmetic $\RR$-Cartier divisor of $C^0$-type on $X$.
Then $\overline{D}$ is nef if and only if
$\overline{D}$ is pseudo-effective  
and
$\adeg (\overline{D}^2) = \avol(\overline{D})$.
\end{Corollary}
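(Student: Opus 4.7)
The plan is to reduce the corollary to two results already established in the paper: Theorem~\ref{thm:equal:cond:gen:Hodge} handles the case $\deg(D_K) > 0$, and Lemma~\ref{lem:nef:deg:zero}(2) handles the case $\deg(D_K) = 0$. The hypothesis of pseudo-effectivity enters precisely to rule out $\deg(D_K) < 0$.

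The forward implication is essentially a citation. If $\overline{D}$ is nef then for every big arithmetic $\RR$-Cartier divisor $\overline{A}$ of $C^0$-type, $\overline{D} + \overline{A}$ is again big, by a standard arithmetic Brunn--Minkowski argument (for instance $\avol(\overline{D} + \overline{A})^{1/2} \geq \avol(\overline{D})^{1/2} + \avol(\overline{A})^{1/2}$), so $\overline{D}$ is pseudo-effective; the equality $\adeg(\overline{D}^2) = \avol(\overline{D})$ for nef $\overline{D}$ is exactly \cite[Corollary~5.5]{MoCont} or \cite[Proposition-Definition~6.4.1]{MoArZariski}.

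For the converse, assume $\overline{D}$ is pseudo-effective and $\adeg(\overline{D}^2) = \avol(\overline{D})$. The first step is to show $\deg(D_K) \geq 0$. Fix an ample arithmetic Cartier divisor $\overline{A}_0$ of $C^{\infty}$-type; for every $\epsilon > 0$ the divisor $\epsilon \overline{A}_0$ is big, so pseudo-effectivity gives that $\overline{D} + \epsilon \overline{A}_0$ is big. But arithmetic bigness on the arithmetic surface $X$ forces positive degree on the generic fiber: if $\deg((D + \epsilon A_0)_K) \leq 0$, then $\dim_K H^0(X_K, m(D + \epsilon A_0)_K)$ stays bounded as $m \to \infty$ (a degree $\leq 0$ divisor on the curve $X_K$ has bounded $H^0$), and a routine lattice-point bound in a norm ball of bounded-rank lattice gives $\log \#\aH(X, m(\overline{D} + \epsilon \overline{A}_0)) = O(\log m)$, contradicting bigness. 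Hence $\deg(D_K) + \epsilon \deg(A_{0,K}) > 0$ for every $\epsilon > 0$, and letting $\epsilon \to 0^+$ yields $\deg(D_K) \geq 0$.

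With $\deg(D_K) \geq 0$ in hand, split into two cases. If $\deg(D_K) > 0$, Theorem~\ref{thm:equal:cond:gen:Hodge} applies verbatim and $\overline{D}$ is nef. If $\deg(D_K) = 0$, the same lattice-point argument applied to $\overline{D}$ itself shows $\avol(\overline{D}) = 0$, so the hypothesis forces $\adeg(\overline{D}^2) = 0$, and Lemma~\ref{lem:nef:deg:zero}(2) delivers nefness. The main obstacle, such as it is, lies in the elementary but unavoidable verification that arithmetic bigness on an arithmetic surface implies positive degree on the generic fiber (equivalently, the vanishing of $\avol$ when $\deg(D_K) = 0$); once this is in place, the corollary is simply a combination of Theorem~\ref{thm:equal:cond:gen:Hodge} and Lemma~\ref{lem:nef:deg:zero}(2).
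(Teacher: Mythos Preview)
Your proof is correct and follows exactly the paper's route: reduce to $\deg(D_K) \geq 0$ via pseudo-effectivity, then invoke Theorem~\ref{thm:equal:cond:gen:Hodge} when $\deg(D_K) > 0$ and Lemma~\ref{lem:nef:deg:zero}(2) when $\deg(D_K) = 0$; you simply spell out the step ``$\deg(D_K) \geq 0$'' that the paper dismisses with ``Clearly.'' One small slip: your bound $\log \#\aH = O(\log m)$ should read $O(m)$ (on a rank-one lattice the sup-norm can shrink exponentially in $m$), but since $O(m)/m^2 \to 0$ the contradiction with bigness survives unchanged.
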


\begin{proof}
We need to show that if $\overline{D}$ is pseudo-effective and
$\adeg (\overline{D}^2) = \avol(\overline{D})$, then $\overline{D}$ is nef.
Clearly $\deg(D_K) \geq 0$.
If $\deg(D_K) > 0$, then the nefness of $\overline{D}$ follows from Theorem~\ref{thm:equal:cond:gen:Hodge}.
Moreover, if $\deg(D_K) = 0$, then (2) in Lemma~\ref{thm:equal:cond:gen:Hodge} implies
the assertion.
\end{proof}

\section{Negative part of Zariski decomposition}

We assume that $d=1$.
As an application of Theorem~\ref{thm:equal:cond:gen:Hodge},
let us see that 
the self-intersection number of the negative part of a Zariski decomposition
is negative.

\begin{Theorem}
\label{thm:strict:negative:negative:part}
Let $\overline{D}$ be
an integrable arithmetic $\RR$-Cartier divisor of $C^0$-type on $X$
such that 
$\deg(D_K) \geq 0$.
Let $\overline{D} = \overline{P} + \overline{N}$ be a Zariski decomposition of $\overline{D}$
\rom{(}cf. Conventions and terminology~\rom{\ref{CT:positivity:arithmetic:divisors}}\rom{)}.
Then $\adeg(\overline{N}^2) < 0$ if and only if $\overline{D}$ is not nef.
\end{Theorem}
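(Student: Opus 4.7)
The plan is to derive the equivalence from the single identity
\[
\adeg(\overline{D}^2) - \avol(\overline{D}) = 2\adeg(\overline{P}\cdot\overline{N}) + \adeg(\overline{N}^2), \qquad (\star)
\]
obtained by expanding $\adeg((\overline{P}+\overline{N})^2)$ and using $\adeg(\overline{P}^2)=\avol(\overline{P})=\avol(\overline{D})$ (the first equality because $\overline{P}$ is nef, the second by the definition of a Zariski decomposition). Two standing facts feed into both directions: $\adeg(\overline{P}\cdot\overline{N}) \geq 0$ because $\overline{P}$ is nef and $\overline{N}$ is effective; and $\overline{D}$ is pseudo-effective, being the sum of a nef and an effective divisor (for any big $\overline{A}$, $\overline{D}+\overline{A} \geq \overline{P}+\overline{A}$, and the latter is big since nef plus big is big).

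For the first direction, suppose $\adeg(\overline{N}^2) \geq 0$. Then the right-hand side of $(\star)$ is nonnegative, so $\adeg(\overline{D}^2) \geq \avol(\overline{D})$; the generalized Hodge index inequality of Theorem~\ref{thm:cvol:self:ineq} (applicable since $\overline{D}$ is integrable with $\deg(D_K)\geq 0$) gives the reverse inequality, so $\adeg(\overline{D}^2) = \avol(\overline{D})$. Combined with pseudo-effectivity, Corollary~\ref{cor:characterization:nef:ar:div} then forces $\overline{D}$ to be nef.

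Conversely, suppose $\overline{D}$ is nef. Then $\adeg(\overline{D}^2) = \avol(\overline{D})$, so $(\star)$ rewrites as $\adeg(\overline{N}^2) = -2\adeg(\overline{P}\cdot\overline{N}) \leq 0$. To upgrade this to $\adeg(\overline{N}^2)=0$ (and in particular $\geq 0$, not $<0$), I would bring in one further intersection: since $\overline{D}$ is nef and $\overline{N}$ is effective,
\[
0 \leq \adeg(\overline{D}\cdot\overline{N}) = \adeg(\overline{P}\cdot\overline{N}) + \adeg(\overline{N}^2) = -\adeg(\overline{P}\cdot\overline{N}) \leq 0,
\]
forcing $\adeg(\overline{P}\cdot\overline{N}) = \adeg(\overline{N}^2) = 0$. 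Taking the contrapositive of the equivalence $\adeg(\overline{N}^2)\geq 0 \Longleftrightarrow \overline{D}$ nef then yields the theorem.

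No step looks genuinely hard: the argument is essentially bookkeeping built around $(\star)$, the generalized Hodge index inequality, and Corollary~\ref{cor:characterization:nef:ar:div}. The one subtlety worth flagging is that the case $\deg(D_K)=0$ (where Theorem~\ref{thm:equal:cond:gen:Hodge} does not directly apply) is absorbed cleanly, since Corollary~\ref{cor:characterization:nef:ar:div} is stated for all integrable $\overline{D}$ without a restriction on $\deg(D_K)$.
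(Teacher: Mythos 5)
Your proof is correct and rests on the same pillars as the paper's: Corollary~\ref{cor:characterization:nef:ar:div}, the generalized Hodge index inequality of Theorem~\ref{thm:cvol:self:ineq}, and the expansion of $\adeg(\overline{D}^2)$ in terms of $\overline{P}$ and $\overline{N}$. The one structural difference is that the paper first isolates Lemma~\ref{lem:perpendicular:positive:negative}, which shows $\adeg(\overline{P}\cdot\overline{N})=0$ (and $\adeg(\overline{N}^2)\leq 0$) \emph{unconditionally} for any Zariski decomposition of an integrable $\overline{D}$, by perturbing $\overline{P}+\epsilon\overline{N}$ and letting $\epsilon\to 0$; with that in hand, the displayed identity $\avol(\overline{D})-\adeg(\overline{D}^2)=-\adeg(\overline{N}^2)$ turns the theorem into an immediate restatement of Corollary~\ref{cor:characterization:nef:ar:div}. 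You instead keep the cross term $\adeg(\overline{P}\cdot\overline{N})$ in the identity $(\star)$ and argue the two implications separately: in the direction $\adeg(\overline{N}^2)\geq 0\Rightarrow$ nef you only need $\adeg(\overline{P}\cdot\overline{N})\geq 0$ plus the Hodge inequality, and in the direction nef $\Rightarrow\adeg(\overline{N}^2)=0$ you re-derive the vanishing of the cross term via $\adeg(\overline{D}\cdot\overline{N})\geq 0$. This is a modest reorganization rather than a different proof; the paper's route is slightly cleaner because Lemma~\ref{lem:perpendicular:positive:negative} is also reused in Corollary~\ref{cor:characterization:Zariski:decomp}, whereas your version inlines exactly what is needed for this one statement and so avoids the $\epsilon$-perturbation. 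Your handling of the $\deg(D_K)=0$ case through Corollary~\ref{cor:characterization:nef:ar:div} is also exactly what the paper does.
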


\begin{proof}
First of all, note that $\overline{D}$ is pseudo-effective.
As $\adeg (\overline{P} \cdot \overline{N}) = 0$ by the following Lemma~\ref{lem:perpendicular:positive:negative},
\[
\avol(\overline{D}) - \adeg(\overline{D}^2) = \avol(\overline{P}) - \adeg(\overline{D}^2)
= \adeg(\overline{P}^2) - \adeg(\overline{D}^2) = -\adeg(\overline{N}^2).
\]
Therefore, by Corollary~\ref{cor:characterization:nef:ar:div},
$\overline{D}$ is not nef if and only if $\avol(\overline{D}) > \adeg(\overline{D}^2)$.
Thus the assertion follows.
\end{proof}

\begin{Lemma}
\label{lem:perpendicular:positive:negative}
Let $\overline{D}$ be an integrable arithmetic $\RR$-Cartier divisor of $C^0$-type on $X$.
If $\overline{D} = \overline{P} + \overline{N}$ is a Zariski decomposition of $\overline{D}$,
then $\adeg(\overline{P} \cdot \overline{N}) = 0$ and $\adeg(\overline{N}^2) \leq 0$.
\end{Lemma}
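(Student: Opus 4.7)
The plan is to apply the generalized Hodge index theorem (Theorem~\ref{thm:cvol:self:ineq}) to the one-parameter family $\overline{D}_t := \overline{P} + t\overline{N}$ for $t \in [0,1]$, and combine this with the standard nonnegativity of a nef--times--effective intersection.

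First I would record that $\adeg(\overline{P} \cdot \overline{N}) \geq 0$. This is the usual nef/effective inequality on an arithmetic surface: writing $\overline{N} = (N, k)$ with $N = \sum a_i C_i$ effective and $k \geq 0$ a.e., the finite part decomposes as a sum of nonnegative contributions $a_i \adeg(\rest{\overline{P}}{C_i})$ by condition (a) of nefness, while the archimedean contribution $(1/2)\int_{X(\CC)} k\, c_1(\overline{P})$ is nonnegative because $c_1(\overline{P})$ is a positive current by condition (b).

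Next I would observe that each $\overline{D}_t$ is integrable (since $\overline{P}$ is nef and $\overline{N}$ is integrable by Conventions and terminology~\ref{CT:Div:Nef}) and satisfies $\deg((D_t)_K) \geq 0$. From $\overline{P} \leq \overline{D}_t \leq \overline{D}$ and the monotonicity of $\avol$, together with $\avol(\overline{P}) = \adeg(\overline{P}^2)$ (nefness) and $\avol(\overline{D}) = \avol(\overline{P})$ (the Zariski decomposition property), one obtains $\avol(\overline{D}_t) = \adeg(\overline{P}^2)$ for every $t \in [0,1]$. Applying Theorem~\ref{thm:cvol:self:ineq} to $\overline{D}_t$ and expanding bilinearly yields
\[
2t \adeg(\overline{P} \cdot \overline{N}) + t^2 \adeg(\overline{N}^2) \leq 0 \qquad (0 \leq t \leq 1).
\]
Dividing by $t > 0$ and letting $t \to 0^+$ forces $\adeg(\overline{P} \cdot \overline{N}) \leq 0$, hence equal to $0$ by the preceding paragraph; substituting back leaves $t^2 \adeg(\overline{N}^2) \leq 0$, so $\adeg(\overline{N}^2) \leq 0$.

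There is no substantive obstacle once Theorem~\ref{thm:cvol:self:ineq} is in hand: the argument is the familiar interpolation trick from the classical Zariski-decomposition story. The only point needing modest care is verifying that the family $\overline{D}_t$ meets the hypotheses of Theorem~\ref{thm:cvol:self:ineq} (integrability and nonnegative generic degree), which is immediate from the data of the Zariski decomposition.
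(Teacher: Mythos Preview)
Your argument is correct and is essentially the same as the paper's own proof: both interpolate via $\overline{D}_\epsilon = \overline{P} + \epsilon\,\overline{N}$, use $\avol(\overline{P}) \leq \avol(\overline{D}_\epsilon) \leq \avol(\overline{D}) = \avol(\overline{P})$ together with Theorem~\ref{thm:cvol:self:ineq} to obtain $2\,\adeg(\overline{P}\cdot\overline{N}) + \epsilon\,\adeg(\overline{N}^2) \leq 0$, and then combine with the nef--effective inequality $\adeg(\overline{P}\cdot\overline{N}) \geq 0$ to conclude. The only cosmetic difference is that you record the nef--effective inequality first and spell out the hypothesis $\deg((D_t)_K) \geq 0$ explicitly.
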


\begin{proof}
For $0 < \epsilon \leq 1$, we set $\overline{D}_{\epsilon} =\overline{P} + \epsilon \overline{N}$.
Then $\overline{D}_{\epsilon}$ is integrable and
$\avol(\overline{P}) = \avol(\overline{D}_{\epsilon})$ because
\[
\avol(\overline{P}) \leq \avol(\overline{D}_{\epsilon}) \leq \avol(\overline{D}) = \avol(\overline{P}).
\]
Thus, by the generalized Hodge index theorem (cf. Theorem~\ref{thm:cvol:self:ineq}),
\[
\adeg ((\overline{P} + \epsilon \overline{N})^2) = \adeg (\overline{D}_{\epsilon}^2) \leq \avol(\overline{D}_{\epsilon}) = \avol(\overline{P}) =
\adeg(\overline{P}^2),
\]
and hence
\[
2 \adeg(\overline{P} \cdot \overline{N}) + \epsilon \adeg(\overline{N}^2) \leq 0.
\]
In particular, $\adeg(\overline{P} \cdot \overline{N}) \leq 0$.
On the other hand, as $\overline{P}$ is nef and $\overline{N}$ is effective,
$\adeg (\overline{P} \cdot \overline{N}) \geq 0$.
Thus $\adeg(\overline{P} \cdot \overline{N}) = 0$ and $\adeg(\overline{N}^2) \leq 0$.
\end{proof}

\begin{Remark}
If $\overline{D}$ is big,
then the Zariski decomposition $\overline{D} = \overline{P} + \overline{N}$ is
uniquely determined by \cite[Theorem~4.2.1]{MoArLinB}.
Otherwise, it is not necessarily unique.
\end{Remark}

As a consequence of the above theorem,
we have the following numerical characterization of the greatest element of $\Upsilon(\overline{D})$
(cf. Conventions and terminology~\ref{CT:positivity:arithmetic:divisors}).

\begin{Corollary}
\label{cor:characterization:Zariski:decomp}
We assume that $X$ is regular. Let $\overline{D}$ and
$\overline{P}$ be arithmetic $\RR$-Cartier divisors of $C^0$-type on $X$.
Then the following are equivalent:
\begin{enumerate}
\renewcommand{\labelenumi}{(\arabic{enumi})}
\item
$\overline{P}$ is the greatest element of $\Upsilon(\overline{D})$, that is,
$\overline{P} \in \Upsilon(\overline{D})$ and $\overline{M} \leq \overline{P}$ 
for all $\overline{M} \in \Upsilon(\overline{D})$.

\item
$\overline{P}$ is an element of $\Upsilon(\overline{D})$ with the following property:
\[
\adeg(\overline{P} \cdot \overline{B}) = 0\quad\text{and}\quad
\adeg(\overline{B}^2) < 0
\]
for all
integrable arithmetic $\RR$-Cartier divisors $\overline{B}$ of $C^0$-type
with
$(0,0) \lneqq \overline{B} \leq \overline{D} - \overline{P}$.
\end{enumerate}
\end{Corollary}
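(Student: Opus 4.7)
The plan is to argue the two directions separately, using Corollary~\ref{cor:characterization:nef:ar:div} as the essential tool for the ``strict'' negativity in (2) and the fact that the maximum of finitely many nef arithmetic divisors is still nef (cf. Claim~\ref{claim:lem:vertical:Zariski:decomp:02} and \cite[Lemma~9.1.2]{MoArZariski}) for the converse.

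(1) $\Rightarrow$ (2): Assume $\overline{P}$ is the greatest element. By Proposition~\ref{prop:small:sec:D:P}, $\overline{D} = \overline{P} + (\overline{D} - \overline{P})$ is a Zariski decomposition, so $\avol(\overline{D}) = \avol(\overline{P})$. Let $\overline{B}$ be integrable with $(0,0) \lneqq \overline{B} \leq \overline{D} - \overline{P}$. For $0 < \epsilon \leq 1$ set $\overline{D}_\epsilon := \overline{P} + \epsilon \overline{B}$; this is integrable and sandwiched $\overline{P} \leq \overline{D}_\epsilon \leq \overline{D}$, so $\avol(\overline{D}_\epsilon) = \avol(\overline{P}) = \adeg(\overline{P}^2)$. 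Because $\deg(P_K), \deg(B_K) \geq 0$, Theorem~\ref{thm:cvol:self:ineq} gives $\adeg(\overline{D}_\epsilon^2) \leq \avol(\overline{D}_\epsilon) = \adeg(\overline{P}^2)$, i.e.\
\[
2\adeg(\overline{P}\cdot\overline{B}) + \epsilon \adeg(\overline{B}^2) \leq 0.
\]
Letting $\epsilon\to 0$ yields $\adeg(\overline{P}\cdot\overline{B}) \leq 0$; since $\overline{P}$ is nef and $\overline{B}$ effective, the reverse inequality is automatic, so $\adeg(\overline{P}\cdot\overline{B}) = 0$. Plugging back gives $\adeg(\overline{B}^2) \leq 0$.

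To upgrade this to strict inequality, observe that $\overline{P} + \overline{B}$ lies in $[\overline{P}, \overline{D}]$ so is pseudo-effective and integrable; and by the maximality of $\overline{P}$ it cannot be nef (otherwise $\overline{P} + \overline{B} \in \Upsilon(\overline{D})$ would force $\overline{B} \leq (0,0)$, contradicting $\overline{B}\gneqq(0,0)$). By Corollary~\ref{cor:characterization:nef:ar:div} this means $\adeg((\overline{P}+\overline{B})^2) \neq \avol(\overline{P}+\overline{B})$. Since $\avol(\overline{P}+\overline{B}) = \avol(\overline{P}) = \adeg(\overline{P}^2)$ and $\adeg((\overline{P}+\overline{B})^2) = \adeg(\overline{P}^2) + \adeg(\overline{B}^2)$ (using $\adeg(\overline{P}\cdot\overline{B})=0$), we conclude $\adeg(\overline{B}^2) \neq 0$, hence $\adeg(\overline{B}^2) < 0$.

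(2) $\Rightarrow$ (1): Given $\overline{M} \in \Upsilon(\overline{D})$, set $\overline{A} := \max\{\overline{P}, \overline{M}\}$ (cf.\ Conventions~\ref{CT:max:arith:div}). The argument of Claim~\ref{claim:lem:vertical:Zariski:decomp:02} applies to \emph{all} reduced irreducible curves, so $\max$ of nef divisors is nef; combined with \cite[Lemma~9.1.2]{MoArZariski} for the Green function, $\overline{A}$ is nef. Since $\overline{A} \leq \overline{D}$, we have $\overline{A} \in \Upsilon(\overline{D})$. Put $\overline{B} := \overline{A} - \overline{P}$; then $\overline{B}$ is integrable (difference of nef), effective, and $\overline{B} \leq \overline{D} - \overline{P}$. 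If $\overline{B} \gneqq (0,0)$, hypothesis (2) gives $\adeg(\overline{P}\cdot\overline{B}) = 0$ and $\adeg(\overline{B}^2) < 0$; but
\[
\adeg(\overline{B}^2) = \adeg(\overline{A}\cdot\overline{B}) - \adeg(\overline{P}\cdot\overline{B}) = \adeg(\overline{A}\cdot\overline{B}) \geq 0
\]
since $\overline{A}$ is nef and $\overline{B}$ is effective, a contradiction. Therefore $\overline{B} = (0,0)$, meaning $\overline{A} = \overline{P}$ and hence $\overline{M} \leq \overline{P}$, as required.

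The one place the argument is not completely routine is the strict sign in (1)$\Rightarrow$(2): the generalized Hodge index inequality by itself only delivers $\adeg(\overline{B}^2) \leq 0$, and one must invoke the numerical characterization of nefness (Corollary~\ref{cor:characterization:nef:ar:div}) applied to $\overline{P} + \overline{B}$ to rule out equality. This is the main obstacle, and it is precisely where the deep input of the paper (Theorem~\ref{thm:equal:cond:gen:Hodge}) is used.
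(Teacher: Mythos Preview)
Your proof is correct and follows essentially the same approach as the paper. The only cosmetic difference is packaging: the paper first isolates the orthogonality $\adeg(\overline{P}\cdot\overline{N})=0$ and $\adeg(\overline{N}^2)\leq 0$ for any Zariski decomposition as a separate lemma (Lemma~\ref{lem:perpendicular:positive:negative}, proved by exactly your $\epsilon$-argument), then states the strict negativity as Theorem~\ref{thm:strict:negative:negative:part} (proved via Corollary~\ref{cor:characterization:nef:ar:div}, just as you do), and finally cites these two results in the proof of the corollary; you instead inline both steps directly. The direction (2)$\Rightarrow$(1) is identical to the paper's, and the paper likewise invokes \cite[Lemma~9.1.2]{MoArZariski} for the nefness of $\overline{A}=\max\{\overline{P},\overline{M}\}$.
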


\begin{proof}
(1) $\Longrightarrow$ (2) :
By Proposition~\ref{prop:small:sec:D:P},
$\avol(\overline{D}) = \avol(\overline{P})$, so that $\overline{P} + \overline{B}$ is a Zariski decomposition
because
\[
\avol(\overline{P}) \leq \avol(\overline{P} + \overline{B}) \leq \avol(\overline{D}).
\]
Thus
$\adeg(\overline{P} \cdot \overline{B}) = 0$ 
by Lemma~\ref{lem:perpendicular:positive:negative}. 
As $\overline{B} \gneqq (0,0)$ and $\overline{P}$ is the greatest element of $\Upsilon(\overline{D})$,
$\overline{P} + \overline{B}$ is not nef, so that
$\adeg(\overline{B}^2) < 0$ by Theorem~\ref{thm:strict:negative:negative:part}.

(2) $\Longrightarrow$ (1) :
Let $\overline{M}$ be an element of $\Upsilon(\overline{D})$.
If we set $\overline{A} = \max \{ \overline{P}, \overline{M} \}$ 
(cf. Conventions and terminology~\ref{CT:max:arith:div}) 
and $\overline{B} = \overline{A} - \overline{P}$,
then $\overline{B}$ is effective, $\overline{A} \leq \overline{D}$ and $\overline{A}$ is nef by \cite[Lemma~9.1.2]{MoArZariski}. 
Moreover, 
\[
\overline{B} = \overline{A} - \overline{P} \leq \overline{D} - \overline{P}.
\]
If we assume $\overline{B} \gneqq (0,0)$, then,
by the property, $\adeg(\overline{P} \cdot \overline{B}) = 0$ and $\adeg(\overline{B}^2) < 0$.
On the other hand, as $\overline{A}$ is nef and $\overline{B}$ is effective,
\[
0 \leq \adeg(\overline{A} \cdot \overline{B}) = \adeg(\overline{P} + \overline{B} \cdot \overline{B })
= \adeg(\overline{B}^2),
\]
which is a contradiction, so that $\overline{B} = (0,0)$, that is,
$\overline{P} = \overline{A}$, which means that $\overline{M} \leq \overline{P}$, as required.
\end{proof}

\begin{Corollary}
We assume that $X$ is regular.
Let $\overline{D}$ be an arithmetic $\RR$-Cartier divisor of $C^0$-type on $X$ such that
$\Upsilon(\overline{D}) \not= \emptyset$.
Let $\overline{P}$ be the greatest element of $\Upsilon(\overline{D})$
\rom{(}cf. \cite[Theorem~9.2.1]{MoArZariski}\rom{)} and let $\overline{N} := \overline{D} - \overline{P}$.
We assume that $N \not= 0$. Let $N = c_1 C_1 + \cdots + c_l C_l$ be the decomposition
such that $c_1, \ldots, c_l \in \RR_{>0}$ and
$C_1, \ldots, C_l$ are distinct reduced and irreducible $1$-dimensional closed subschemes on $X$.
Let $\overline{C}_1 = (C_1, h_1), \ldots, \overline{C}_l = (C_l, h_l)$ be effective arithmetic
Cartier divisors of $C^0$-type such that
such that $c_1(\overline{C}_1), \ldots, c_1(\overline{C}_l)$ are positive currents and
\[
c_1 \overline{C}_1 + \cdots + c_l \overline{C}_l \leq \overline{N}.
\]
Then
\[
\adeg(\overline{P} \cdot \overline{C_1}) = \cdots = \adeg(\overline{P} \cdot \overline{C_l}) = 0
\]
and
the $(l\times l)$ symmetric matrix given by
\[
\left( \adeg(\overline{C}_i \cdot \overline{C}_j) \right)_{\substack{1 \leq i \leq l \\ 1 \leq j \leq l}}
\]
is negative definite.
\end{Corollary}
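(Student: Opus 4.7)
The approach is to feed Corollary~\ref{cor:characterization:Zariski:decomp} with test divisors $\overline{B}=\epsilon\sum_{i} b_i\overline{C}_i$ ($b_i\ge 0$) to extract both conclusions simultaneously, and then to upgrade the resulting ``negativity on the positive orthant'' to full negative definiteness by exploiting the off-diagonal positivity of the arithmetic intersection pairing.

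Since $c_1(\overline{C}_i)$ is a positive current, each $\overline{C}_i$ is integrable by Conventions and terminology~\ref{CT:Div:Nef}, and real linear combinations of integrable divisors remain integrable. For $b=(b_1,\ldots,b_l)\in\RR^l_{\ge 0}\setminus\{0\}$, pick $\epsilon>0$ with $\epsilon b_i\le c_i$ for every $i$, and set $\overline{B}:=\epsilon\sum_{i} b_i\overline{C}_i$. Then $\overline{N}-\overline{B}\ge\sum_{i}(c_i-\epsilon b_i)\overline{C}_i\ge(0,0)$, so $(0,0)\lneqq \overline{B}\le \overline{D}-\overline{P}$, and Corollary~\ref{cor:characterization:Zariski:decomp} yields $\adeg(\overline{P}\cdot \overline{B})=0$ and $\adeg(\overline{B}^{2})<0$. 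Taking $b=e_k$ in the first identity and dividing by $\epsilon$ proves~(1); writing $M_{ij}:=\adeg(\overline{C}_i\cdot \overline{C}_j)$ and dividing the second by $\epsilon^{2}$ shows that the symmetric form $Q(x):=x^{T}Mx$ satisfies $Q(b)<0$ for every $b\in\RR^l_{\ge 0}\setminus\{0\}$.

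To promote this to strict negativity on all of $\RR^l\setminus\{0\}$, I would first verify that $M_{ij}\ge 0$ whenever $i\ne j$: the curves $C_i,C_j$ are distinct and reduced, hence meet properly and contribute non-negatively at the finite places, while at the archimedean places the star product of the Green functions $g_i,g_j$ decomposes into pairings of $g_i$ (resp.\ $g_j$) against the positive current $c_1(\overline{C}_j)$ (resp.\ $c_1(\overline{C}_i)$) together with the measures $\delta_{C_i},\delta_{C_j}$, each of which is non-negative because $g_i,g_j\ge 0$ almost everywhere by the effectivity of $\overline{C}_i,\overline{C}_j$. Given $a\in\RR^l\setminus\{0\}$, decompose $a=a^+-a^-$ with $a^\pm\in\RR^l_{\ge 0}$ of disjoint supports; then
\[
Q(a)=Q(a^+)+Q(a^-)-2\sum_{i\ne j} a^+_i a^-_j M_{ij},
\]
and in every case (either one of $a^\pm$ is zero, reducing to the positive-orthant result, or both are nonzero, in which case $Q(a^\pm)<0$ by the positive-orthant result and the cross term is $\le 0$ by off-diagonal non-negativity) we obtain $Q(a)<0$.

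The only substantive obstacle is the off-diagonal sign $M_{ij}\ge 0$ for $i\ne j$: the rest is a direct invocation of Corollary~\ref{cor:characterization:Zariski:decomp} combined with an elementary sign-split in linear algebra, but this bound requires unpacking the definition of the arithmetic intersection pairing and invoking the positivity of the star product of two effective Green functions paired with positive first Chern currents, which is where the analytic input genuinely enters.
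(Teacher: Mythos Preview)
Your argument is correct and follows essentially the same route as the paper: apply Corollary~\ref{cor:characterization:Zariski:decomp} to test divisors $\overline{B}$ lying in the positive orthant (after rescaling so that $\overline{B}\le\overline{N}$), obtain $Q(b)<0$ on $\RR^l_{\ge 0}\setminus\{0\}$, and then pass to all of $\RR^l\setminus\{0\}$ via the sign-split $a=a^+-a^-$ together with the non-negativity of the cross term. The paper phrases the latter as $\adeg(\overline{B}_y\cdot\overline{B}_z)\ge 0$ for $y,z\in\RR_{\ge 0}^l$ with disjoint supports, invoking exactly the ingredients you list (effectivity of both factors, positivity of their first Chern currents, and absence of common components); your pointwise version $M_{ij}\ge 0$ for $i\ne j$ is the same fact specialized to basis vectors. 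The only difference worth mentioning is how the orthogonality $\adeg(\overline{P}\cdot\overline{C}_k)=0$ is extracted: you plug in $b=e_k$ directly, whereas the paper plugs in $b=(c_1,\ldots,c_l)$, obtains $\sum_k c_k\,\adeg(\overline{P}\cdot\overline{C}_k)=0$, and then uses that each summand is $\ge 0$ (since $\overline{P}$ is nef and each $\overline{C}_k$ is effective) to force all of them to vanish. Your route is slightly more direct; both are valid.
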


\begin{proof}
For $x = (x_1, \ldots, x_l) \in \RR^l$, we set
$\overline{B}_x = x_1 \overline{C}_1 + \cdots + x_l \overline{C}_l$
and $\overline{D}_x = \overline{P} + \overline{B}_x$.
If $0 \leq x_i \leq c_i$ for all $i = 1, \ldots, l$,
then $\overline{B}_x$ is integrable and $(0,0) \leq \overline{B}_x \leq \overline{N}$.
Thus, by Corollary~\ref{cor:characterization:Zariski:decomp},
\[
0 = \adeg(\overline{P} \cdot \overline{B}_{(c_1, \ldots, c_l)}) =
c_1 \adeg(\overline{P} \cdot \overline{C}_1) + \cdots + c_l \adeg(\overline{P} \cdot \overline{C}_l).
\]
Note that $\adeg(\overline{P} \cdot \overline{C}_i) \geq 0$ for all $i=1, \ldots, l$.
Therefore,
\[
\adeg(\overline{P} \cdot \overline{C_1}) = \cdots = \adeg(\overline{P} \cdot \overline{C_l}) = 0
\]
Here we claim the following:

\begin{Claim}
If $x \in (\RR_{\geq 0})^l \setminus \{0 \}$, then $\adeg( \overline{B}_x^2) < 0$.
\end{Claim}

\begin{proof}
Note that $\overline{B}_{tx} = t \overline{B}_x$ and that
we can find a positive number $t$ with $t x_i \leq c_i$ ($\forall i$).
Thus we may assume that $x_i \leq c_i$ ($\forall i$), and hence
the assertion follows by Corollary~\ref{cor:characterization:Zariski:decomp}.
\end{proof}

We need to see that if $x \in \RR^l \setminus \{ 0 \}$,
then $\adeg (\overline{B}_x^2) < 0$.
We can choose 
\[
y = (y_1, \ldots, y_l), z = (z_1, \ldots, z_l) \in (\RR_{\geq 0})^l
\]
such that
$x = y - z$ and $\{ i \mid y_i \not= 0 \} \cap \{ j \mid z_j \not= 0 \} = \emptyset$.
Note that either $y \not= 0$ or $z \not= 0$.
Moreover, $\adeg (\overline{B}_y \cdot \overline{B}_z) \geq 0$ because
$\overline{B}_y \geq (0,0)$, $\overline{B}_z \geq (0,0)$,
$c_1(\overline{B}_y)$ and $c_1(\overline{B}_z)$ are positive currents, 
and
$B_y$ and $B_z$ have no common reduced and irreducible $1$-dimensional closed subschemes. 
Thus, by using the above claim,
\[
\adeg (\overline{B}_x^2) = \adeg ((\overline{B}_y - \overline{B}_z)^2) 
= \adeg (\overline{B}_y^2) + \adeg(\overline{B}_z^2) - 2 \adeg(\overline{B}_y \cdot \overline{B}_z)
< 0.
\]
\end{proof}

\begin{Remark}
By \cite[Theorem~9.3.4, (4.1)]{MoArZariski},
we can find effective arithmetic Cartier divisors
$\overline{C}_1, \ldots, \overline{C}_l$ of $C^0$-type such that
$c_1(\overline{C}_1), \ldots, c_1(\overline{C}_l)$ are positive currents and
$c_1 \overline{C}_1 + \cdots + c_l \overline{C}_l \leq \overline{N}$.
\end{Remark}

\begin{Example}
Let $\PP^1_{\ZZ} = \Proj(\ZZ[T_0, T_1])$ and $H_i = \{ T_i = 0 \}$ for $i=0,1$.
We fix  positive numbers $a_0, a_1$ such that $a_0 < 1$, $a_1 < 1$ and $a_0 + a_1 \geq 1$.
Let us consider an arithmetic Cartier divisor $\overline{D}$ of $C^{\infty}$-type given by
\[
\overline{D} := (H_0, \log(a_0 + a_1 \vert z \vert^2)),
\]
where $z = T_1/T_0$. 
Note that $c_1(\overline{D})$ is a positive form.
Moreover, $\overline{D}$ is pseudo-effective and not nef (cf. \cite[Theorem~2.3]{MoBig}).
In \cite[Theorem~4.1]{MoBig}, we give the greatest element of $\Upsilon(\overline{D})$ as follows:
Let $\varphi$ be a continuous function on the interval $[0,1]$ given by
\[
\varphi(x) = - (1-x)\log(1-x) -x \log(x)  + (1-x) \log (a_0) + x \log(a_1),
\]
and let $\vartheta = \min \{ x \in [0,1] \mid \varphi(x) \geq 0 \}$ and
$\theta = \max \{ x \in [0,1] \mid \varphi(x) \geq 0 \}$.
We set 
\[
\overline{P} := (\theta H_0 - \vartheta H_1, p(z)),\quad
\overline{N}_1 := (\vartheta H_1, n_1(z))\quad\text{and}\quad
\overline{N}_2 := ((1-\theta)H_0, n_2(z)),
\] 
where $p(z)$, $n_1(z)$ and $n_2(z)$ are Green functions given by
{\allowdisplaybreaks%
\begin{align*}
p(z) & :=\begin{cases}
\vartheta \log \vert z \vert^2 & \hskip7.4em \text{if $\vert z \vert \leq \sqrt{\frac{a_0\vartheta}{a_1(1-\vartheta)}}$}, \\
\log (a_0   + a_1\vert z \vert^2) & \hskip7.4em
\text{if $\sqrt{\frac{a_0\vartheta}{a_1(1-\vartheta)}} \leq \vert z \vert \leq \sqrt{\frac{a_0\theta}{a_1(1-\theta)}}$}, \\
\theta \log \vert z \vert^2 & \hskip7.4em \text{if $\vert z \vert \geq \sqrt{\frac{a_0\theta}{a_1(1-\theta)}}$}.
\end{cases} \\
n_1(z) & :=\begin{cases}
\log(a_0 + a_1 \vert z \vert^2) - \vartheta \log \vert z \vert^2 
& \hskip2.7em \text{if $\vert z \vert \leq \sqrt{\frac{a_0\vartheta}{a_1(1-\vartheta)}}$}, \\
0  & \hskip2.7em
\text{if $\vert z \vert \geq \sqrt{\frac{a_0\vartheta}{a_1(1-\vartheta)}}$}. \\
\end{cases} \\
n_2(z) & :=\begin{cases}
0 & 
\text{if $\vert z \vert \leq \sqrt{\frac{a_0\theta}{a_1(1-\theta)}}$}, \\
\log(a_1 + a_0 \vert z \vert^{-2}) + (1 - \theta) \log \vert z \vert^2 & \text{if $\vert z \vert \geq \sqrt{\frac{a_0\theta}{a_1(1-\theta)}}$}.
\end{cases}
\end{align*}}
Then $\overline{P}$ gives the greatest element of $\Upsilon(\overline{D})$ and 
$\overline{D} = \overline{P} + (\overline{N}_1 + \overline{N}_2)$.
It is easy to see that
\[
\adeg (\overline{P} \cdot \overline{N}_1) = \adeg (\overline{P} \cdot \overline{N}_2) = 0\quad\text{and}\quad
\adeg(\overline{N}_1 \cdot \overline{N}_2) = 0.
\]
Moreover,
\begin{align*}
\adeg(\overline{N}_1 \cdot \overline{N}_1) & =
\adeg(\overline{N}_1 \cdot (\overline{N}_1 -\vartheta \widehat{(z)} ))
= \adeg(\overline{N}_1 \cdot (\vartheta H_0, n_1(z) + \vartheta \log \vert z \vert^2 )) \\
& = \vartheta \adeg(\rest{\overline{N}_1}{H_0}) + 
\frac{1}{2}\int_{\PP^1(\CC)} c_1(\overline{N}_1)(n_1(z) + \vartheta \log \vert z \vert^2) \\
& = \frac{1}{2} \int_{\vert z \vert \leq \sqrt{\frac{a_0\vartheta}{a_1(1-\vartheta)}}} 
dd^c(\log (a_0 + a_1 \vert z \vert^2)) \log (a_0 + a_1 \vert z \vert^2) \\
& = \frac{(1 - \vartheta) \log(1 - \vartheta) + (\log (a_0) + 1) \vartheta}{2}.
\end{align*}
In the same way,
\[
\adeg(\overline{N}_2 \cdot \overline{N}_2) = \frac{\theta \log(\theta) + (\log (a_1) + 1) (1 - \theta)}{2}.
\]
Thus the negative definite symmetric matrix $(\adeg(\overline{N}_i \cdot \overline{N}_j))_{i,j=1,2}$
is
\[
\begin{pmatrix}
\frac{(1 - \vartheta) \log(1 - \vartheta) + (\log (a_0) + 1) \vartheta}{2} & 0 \\
0 & \frac{\theta \log(\theta) + (\log (a_1) + 1) (1 - \theta)}{2}
\end{pmatrix}.
\]
\end{Example}

\renewcommand{\thesection}{Appendix~\Alph{section}}
\renewcommand{\theTheorem}{\Alph{section}.\arabic{Theorem}}
\renewcommand{\theClaim}{\Alph{section}.\arabic{Theorem}.\arabic{Claim}}
\renewcommand{\theequation}{\Alph{section}.\arabic{Theorem}.\arabic{Claim}}

\setcounter{section}{0}

\section{Relative Zariski decomposition and pseudo-effectivity}
We assume that $X$ is regular and $d=1$.
Let $\overline{D} = (D, g)$ be an arithmetic $\RR$-Cartier divisor of $C^0$-type on $X$.
In this appendix,
we would like to investigate the pseudo-effectivity of the relative Zariski decomposition.

\begin{Proposition}
\label{prop:relative:Zariski:pseudo:effective}
We assume that $\deg(D_K) \geq 0$.
Let $\overline{Q}$ be the greatest element of $\Upsilon_{rel}(\overline{D})$
\rom{(}cf. Section~\rom{\ref{sec:relative:Zariski:decomp}}\rom{)}.
Then $\overline{D}$ is pseudo-effective if and only if
$\overline{Q}$ is pseudo-effective.
\end{Proposition}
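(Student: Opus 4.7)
The implication $(\Leftarrow)$ is essentially formal. Since $\overline{Q}\le\overline{D}$, the divisor $\overline{N}:=\overline{D}-\overline{Q}$ is effective (with $N$ vertical by Theorem~\ref{thm:relative:Zariski:decomp}(a)). If $\overline{Q}$ is pseudo-effective and $\overline{A}$ is any big arithmetic $\RR$-Cartier divisor of $C^0$-type, then $\overline{Q}+\overline{A}$ is big, and $\overline{D}+\overline{A}=(\overline{Q}+\overline{A})+\overline{N}\ge\overline{Q}+\overline{A}$ is big as well, so $\overline{D}$ is pseudo-effective.

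For $(\Rightarrow)$, the first key input is property (d) of Theorem~\ref{thm:relative:Zariski:decomp}: the identification $\aH(X,n\overline{Q})=\aH(X,n\overline{D})$ with matching norms for every $n\ge 0$ yields $\avol(\overline{Q})=\avol(\overline{D})$. In particular, if $\overline{D}$ is big then $\overline{Q}$ is big and hence pseudo-effective, which settles that case.

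Now assume $\overline{D}$ is pseudo-effective but not big, and fix an ample arithmetic Cartier divisor $\overline{A}_0$ of $C^\infty$-type. I first reduce pseudo-effectivity of $\overline{Q}$ to the assertion that $\overline{Q}+\epsilon\overline{A}_0$ is big for every $\epsilon>0$. Indeed, given a big $\overline{A}$, openness of the big cone implies $\overline{A}-\epsilon\overline{A}_0$ is big (hence pseudo-effective) for sufficiently small $\epsilon>0$, so that
\[
\overline{Q}+\overline{A}=(\overline{Q}+\epsilon\overline{A}_0)+(\overline{A}-\epsilon\overline{A}_0)
\]
is the sum of a big divisor and a pseudo-effective divisor, and therefore big. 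For each $\epsilon>0$ the divisor $\overline{D}+\epsilon\overline{A}_0$ is big, so Theorem~\ref{thm:relative:Zariski:decomp} applied to it furnishes the greatest element $\overline{Q}_\epsilon\in\Upsilon_{rel}(\overline{D}+\epsilon\overline{A}_0)$, which is big by the previous paragraph. Setting $\overline{N}_\epsilon:=\overline{D}+\epsilon\overline{A}_0-\overline{Q}_\epsilon$, the maximality of $\overline{Q}_\epsilon$ combined with the fact that $\overline{Q}+\epsilon\overline{A}_0$ is relatively nef and bounded above by $\overline{D}+\epsilon\overline{A}_0$ gives $\overline{Q}+\epsilon\overline{A}_0\le\overline{Q}_\epsilon$ and $\overline{N}_\epsilon\le\overline{N}$.

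The main obstacle is that the inequality $\overline{Q}+\epsilon\overline{A}_0\le\overline{Q}_\epsilon$ goes in the wrong direction to transfer bigness from $\overline{Q}_\epsilon$ down to $\overline{Q}+\epsilon\overline{A}_0$. To close this gap I plan to exploit the special structure of the gap $\overline{Q}_\epsilon-(\overline{Q}+\epsilon\overline{A}_0)=\overline{N}-\overline{N}_\epsilon$, which is a vertical effective $\RR$-Cartier divisor whose support contains no full fiber (inherited from Theorem~\ref{thm:relative:Zariski:decomp}(c)). Two routes suggest themselves: either (i) adapt the proof of Theorem~\ref{thm:relative:Zariski:decomp}(d) to promote the inclusion $\aH(X,n(\overline{Q}+\epsilon\overline{A}_0))\subseteq\aH(X,n\overline{Q}_\epsilon)$ to an equality, yielding $\avol(\overline{Q}+\epsilon\overline{A}_0)=\avol(\overline{Q}_\epsilon)>0$; or (ii) apply the generalized Hodge index theorem (Theorem~\ref{thm:cvol:self:ineq}) to the relatively nef divisor $\overline{Q}+\epsilon\overline{A}_0$ to get $\acvol(\overline{Q}+\epsilon\overline{A}_0)=\adeg((\overline{Q}+\epsilon\overline{A}_0)^2)$, and then expand this self-intersection in $\epsilon$ and use positivity of $\adeg(\overline{A}_0^2)$ (together with control on $\adeg(\overline{Q}^2)=\acvol(\overline{D})$ and $\adeg(\overline{Q}\cdot\overline{A}_0)$) to force $\acvol(\overline{Q}+\epsilon\overline{A}_0)>0$, hence bigness.
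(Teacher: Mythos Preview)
Your $(\Leftarrow)$ direction and the big case of $(\Rightarrow)$ are correct. The genuine gap is in the pseudo-effective non-big case, where neither of your two routes closes.

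\textbf{Route (ii) is circular.} Since $\overline{D}$ is not big, $\adeg(\overline{Q}^2)=\acvol(\overline{D})\le\avol(\overline{D})=0$. Expanding,
\[
\adeg\bigl((\overline{Q}+\epsilon\overline{A}_0)^2\bigr)=\adeg(\overline{Q}^2)+2\epsilon\,\adeg(\overline{Q}\cdot\overline{A}_0)+\epsilon^2\adeg(\overline{A}_0^2).
\]
For this to be positive for \emph{all} small $\epsilon>0$ you need $\adeg(\overline{Q}\cdot\overline{A}_0)\ge 0$. But for an ample $\overline{A}_0$, the inequality $\adeg(\overline{Q}\cdot\overline{A}_0)\ge 0$ is essentially the pseudo-effectivity of $\overline{Q}$ you are trying to establish; you have no independent control on this cross-term.

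\textbf{Route (i) does not adapt.} The mechanism behind Theorem~\ref{thm:relative:Zariski:decomp}(d) is the \emph{maximality} of $\overline{Q}$ in $\Upsilon_{rel}(\overline{D})$: a small section $\phi$ of $n\overline{D}$ yields $-\widehat{(\phi)}/n\in\Upsilon_{rel}(\overline{D})$, hence $-\widehat{(\phi)}/n\le\overline{Q}$. In your setup, $\overline{Q}+\epsilon\overline{A}_0$ is \emph{not} the greatest element of $\Upsilon_{rel}(\overline{D}+\epsilon\overline{A}_0)$ (that is $\overline{Q}_\epsilon$, and the gap $\overline{N}-\overline{N}_\epsilon$ may be strictly positive), so there is no reason a small section of $n\overline{Q}_\epsilon$ should descend to $n(\overline{Q}+\epsilon\overline{A}_0)$. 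The ``no full fiber in the support'' condition does not help here: it controls self-intersections via Zariski's lemma, not membership of sections.

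\textbf{How the paper proceeds.} The paper splits according to $\deg(D_K)$ rather than big versus non-big. When $\deg(D_K)>0$, it twists by the \emph{constant} $(0,\epsilon)$ instead of by an ample $\epsilon\overline{A}_0$. This keeps the underlying divisor fixed and only rescales norms, so Theorem~\ref{thm:relative:Zariski:decomp}(d) directly gives
\[
\aH\bigl(X,n(\overline{Q}+(0,\epsilon))\bigr)=\aH\bigl(X,n(\overline{D}+(0,\epsilon))\bigr)
\]
for all $n$; since $\overline{D}+(0,\epsilon)$ is big (pseudo-effective plus $\deg(D_K)>0$), so is $\overline{Q}+(0,\epsilon)$. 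When $\deg(D_K)=0$, a genuinely different argument is required: the paper invokes the structure theorem from \cite{MoD} to write $\overline{D}=\widehat{(\phi)}_{\RR}+(E,\eta)$ with $E$ vertical effective containing no full fiber, computes $\overline{Q}$ explicitly as $\widehat{(\phi)}_{\RR}+(0,\lambda)$ with $\lambda$ the fiberwise minimum of $\eta$, and then combines an auxiliary nef divisor (Lemma~\ref{lem:exist:nef:zero}) with Dirichlet's unit theorem to show $\overline{Q}=\widehat{(\phi u)}_{\RR}+(0,\lambda')$ with $\lambda'\ge 0$. Your uniform ample-twist approach does not access this arithmetic input, and some such input is unavoidable in the degree-zero case.
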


\begin{proof}
It is obvious that if $\overline{Q}$ is pseudo-effective, then
$\overline{D}$ is also pseudo-effective, so that we assume that $\overline{D}$ is pseudo-effective.

First we consider the case where $\deg(D_K) > 0$.
Then, by \cite[Proposition~6.3.3]{MoArZariski},
$\overline{D} + (0, \epsilon)$ is big for any $\epsilon \in \RR_{>0}$.
By the property (d) in Theorem~\ref{thm:relative:Zariski:decomp}, 
the natural inclusion map $H^0(X, nQ) \to H^0(X, nD)$ is bijective and
$\Vert \cdot \Vert_{n\overline{Q}} = \Vert \cdot \Vert_{n\overline{D}}$ for each $n \geq 0$. 
Moreover, as
\[
\Vert \cdot \Vert_{n(\overline{Q} + (0, \epsilon))} = e^{-n\epsilon/2} \Vert \cdot \Vert_{n\overline{Q}}
\quad\text{and}\quad
\Vert \cdot \Vert_{n(\overline{D} + (0, \epsilon))} = e^{-n\epsilon/2} \Vert \cdot \Vert_{n\overline{D}},
\]
we have $\Vert \cdot \Vert_{n(\overline{Q} + (0, \epsilon))} = \Vert \cdot \Vert_{n(\overline{D} + (0, \epsilon))}$, and
hence $\overline{Q} + (0, \epsilon)$ is big for all $\epsilon \in \RR_{>0}$.
Thus the assertion follows.

Next we assume that $\deg(D_K) = 0$.
By \cite[Theorem~2.3.3]{MoD},
there are $\phi \in \Rat(X)^{\times}_{\RR}$, a vertical effective $\RR$-Cartier divisor $E$ on $X$ and
an $F_{\infty}$-invariant continuous function $\eta$ on $X(\CC)$ such that
$\overline{D} = \widehat{(\phi)}_{\RR} + (E, \eta)$ and 
$\pi^{-1}(P)_{red} \not\subseteq \Supp(E)$ for all $P \in \Spec(O_K)$.
For each embedding $\sigma : K \hookrightarrow \CC$, let $X_{\sigma} = X \times^{\sigma}_{\Spec(O_K)} \Spec(\CC)$ and
let $\lambda_{\sigma} = \min_{x \in X_{\sigma}} \{ \eta(x) \}$.
Note that $\lambda_{\bar{\sigma}} = \lambda_{\sigma}$ for all $\sigma$.
Let $\lambda : X(\CC) \to \RR$ be the local constant function such that the value of $\lambda$ on $X_{\sigma}$ is
$\lambda_{\sigma}$.

Here let us see that $\overline{Q} = \widehat{(\phi)}_{\RR} + (0, \lambda)$ is the greatest element of
$\Upsilon_{rel}(\overline{D})$. Otherwise,
there is an integrable arithmetic $\RR$-Cartier divisor $\overline{B} = (B, b)$ of $C^0$-type
such that
$(0,0) \lneqq \overline{B} \leq \overline{D} - \overline{Q} = (E, \eta - \lambda)$
and $\overline{Q} + \overline{B}$ is relatively nef.
Since $b$ is continuous and
\[
dd^c([b]) = c_1(\overline{B}) = c_1(\overline{Q} + \overline{B})
\]
is a positive current, $b$ is plurisubharmonic on $X(\CC)$, that is, $b$ is a locally constant function.
Let $b_{\sigma}$ be the value of $b$ on $X_{\sigma}$. If we choose $x_{\sigma} \in X_{\sigma}$ with
$\lambda_{\sigma} = \eta(x_{\sigma})$,
then
\[
0 \leq b_{\sigma} \leq \eta(x_{\sigma}) - \lambda_{\sigma} = 0,
\]
and hence $b = 0$, so that, as $\overline{Q} + \overline{B}$ is relatively nef,
\[
0 \leq \adeg(\overline{Q} + \overline{B} \cdot \overline{B}) = \adeg((B, 0)^2).
\]
On the other hand, by Zariski's lemma, $\adeg((B, 0)^2) < 0$.
This is a contradiction.

By \cite[Lemma~2.3.4 and Lemma~2.3.5]{MoD},
$(E, \lambda)$ is pseudo-effective. 
On the other hand,
by the following Lemma~\ref{lem:exist:nef:zero},
there is a nef arithmetic $\RR$-Cartier divisor $\overline{L}$ of $C^{\infty}$-type such that
$\deg(L_K) > 0$ and
$\adeg(\overline{L} \cdot (E, 0)) = 0$.
Thus, 
\[
0 \leq \adeg(\overline{L} \cdot (E, \lambda)) = \sum_{\sigma} \frac{\deg(L_K)\lambda_{\sigma}}{2},
\]
and hence $\sum_{\sigma} \lambda_{\sigma} \geq 0$.
We set $\lambda' = (1/[K : \QQ]) \sum_{\sigma} \lambda_{\sigma}$ and $\xi = \lambda - \lambda'$.
Then $\lambda' \geq 0$, $\sum_{\sigma} \xi_{\sigma} = 0$ and $\xi_{\bar{\sigma}} = \xi_{\sigma}$ for all $\sigma$, 
where $\xi_{\sigma}$ is the value of $\xi$ on $X_{\sigma}$. Thus,
by Dirichlet's unit theorem,
$(0, \xi) = \widehat{(u)}_{\RR}$ for some $u \in O_K^{\times} \otimes \RR$.
Therefore,
\[
\overline{Q} = \widehat{(\phi u)}_{\RR} + (0, \lambda'),
\]
which is pseudo-effective.
\end{proof}

\begin{Lemma}
\label{lem:exist:nef:zero}
Let $C_1, \ldots, C_r$ be vertical reduced and irreducible $1$-dimensional closed subschemes on $X$ such that
$\pi^{-1}(P)_{red} \not\subseteq C_1 \cup \cdots \cup C_r$ for all
$P \in \Spec(O_K)$.
Then there is a nef arithmetic $\RR$-Cartier divisor $\overline{L}$ of $C^{\infty}$-type such that
$\deg(L_K) > 0$ and
$\adeg(\overline{L} \cdot (C_i, 0)) = 0$ for all $i=1, \ldots, r$.
\end{Lemma}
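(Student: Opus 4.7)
The plan is to build $\overline{L}$ as $\overline{A} + \sum_{P,t} \alpha_t^P (C_{i_t^P},0)$, where $\overline{A}$ is any fixed ample arithmetic Cartier divisor of $C^\infty$-type on $X$ and the coefficients $\alpha_t^P \in \RR_{\geq 0}$ are chosen so that $\adeg(\overline{L}\cdot(C_i,0)) = 0$ for every $i$. Since each $C_i$ is vertical, these modifications alter neither the generic fiber degree nor the first Chern form, so $\deg(L_K) = \deg(A_K) > 0$ and $c_1(\overline{L}) = c_1(\overline{A})$ remains a strictly positive $(1,1)$-form; because the Green function of $(C_{i_t^P},0)$ is identically zero (hence smooth), $\overline{L}$ stays of $C^\infty$-type.

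To set up the system, group the $C_j$ by their images in $\Spec(O_K)$: for each closed point $P$ under some $C_j$, let $C_{i_1^P},\dots,C_{i_{k_P}^P}$ be those $C_j$ contained in $\pi^{-1}(P)$. The hypothesis $\pi^{-1}(P)_{red}\not\subseteq C_1\cup\cdots\cup C_r$ ensures this is a \emph{proper} subfamily of the irreducible components of $\pi^{-1}(P)$. By Zariski's lemma (exactly as used in the proof of Theorem~\ref{thm:cvol:self:ineq}), the intersection matrix $M_P := \bigl(\adeg((C_{i_s^P},0)\cdot(C_{i_t^P},0))\bigr)_{s,t}$ is therefore \emph{negative definite}: any null vector of the fiber intersection form is a rational multiple of the full fiber class $\sum_j m_j F_j$, which cannot lie in the span of a proper subcollection of components. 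The $\alpha^P$ are then uniquely defined by
\[
(-M_P)\,\alpha^P = \bigl(\adeg(\overline{A}\cdot(C_{i_s^P},0))\bigr)_s.
\]

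The crux is to show $\alpha_t^P \geq 0$, and this is where the only non-routine input enters. Observe that $-M_P$ is symmetric positive definite, and its off-diagonal entries $-\adeg((C_{i_s^P},0)\cdot(C_{i_t^P},0))$ are non-positive for $s \neq t$, because distinct prime divisors intersect non-negatively. Such a matrix is a \emph{Stieltjes matrix}, and a classical theorem then guarantees that $(-M_P)^{-1}$ has only non-negative entries. Since the right-hand side is strictly positive by ampleness of $\overline{A}$, we conclude $\alpha^P \geq 0$.

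Once $\alpha_t^P \geq 0$ is in hand, nefness of $\overline{L}$ follows by inspection. On each $C_i$ the intersection vanishes by construction; on any other reduced and irreducible $1$-dimensional closed subscheme $D$---horizontal curve or vertical fiber-component not in $\{C_j\}$---every added term $\alpha_t^P \adeg((C_{i_t^P},0)\cdot(D,0))$ is non-negative, giving $\adeg(\overline{L}\cdot(D,0)) \geq \adeg(\overline{A}\cdot(D,0)) > 0$. Combined with the strict positivity of $c_1(\overline{L})$ and $\deg(L_K) > 0$, this produces the required $\overline{L}$. The only obstacle is the sign analysis of $\alpha^P$ via the Stieltjes-matrix inverse; everything else is routine bookkeeping.
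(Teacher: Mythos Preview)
Your proof is correct and follows essentially the same route as the paper: both set $\overline{L}=\overline{A}+(E,0)$ for an ample $\overline{A}$ and a vertical effective $\RR$-divisor $E$ supported on the $C_i$, solving the linear system fiber by fiber. The only difference is that the paper absorbs the effectivity of $E$ (your $\alpha_t^P\geq 0$) into a single citation of ``Zariski's lemma'' \cite[Lemma~1.1.4]{MoD}, whereas you unpack it explicitly via the Stieltjes-matrix inverse; this is exactly the standard mechanism behind that part of Zariski's lemma, so the arguments coincide.
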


\begin{proof}
Let $\overline{A}$ be an ample arithmetic Cartier divisor of $C^{\infty}$-type.
By using Zariski's lemma, we can find a vertical effective $\RR$-Cartier divisor $E$ such that
\[
\adeg((E, 0) \cdot (C_i, 0)) = -\deg (\overline{A} \cdot (C_i, 0))
\]
for all $i = 1, \ldots, r$ and that $\adeg((E, 0) \cdot (C, 0)) \geq 0$ for all vertical
reduced and irreducible $1$-dimensional closed subschemes $C$ with
$C \not\in \{ C_1, \ldots, C_r \}$.
Thus, if we set $\overline{L} := \overline{A} + (E, 0)$,
then $\overline{L}$ is a nef arithmetic $\RR$-Cartier divisor of $C^{\infty}$-type, $\deg(L_K) > 0$ and
$\adeg(\overline{L} \cdot (C_i, 0)) = 0$ for all $i=1, \ldots, r$.
\end{proof}

\section{Small sections of arithmetic $\RR$-divisors}

Let $\overline{D}$ be an arithmetic $\RR$-Cartier divisor of $C^0$-type on $X$.
In this appendix, let us consider a generalization of \cite[Proposition~9.3.3]{MoArZariski}.
Its proof is much simpler than one of \cite[Proposition~9.3.3]{MoArZariski}.

\begin{Proposition}
\label{prop:small:sec:D:P}
Let $\overline{P}$ be the greatest element of $\Upsilon(\overline{D})$
\rom{(}cf. Conventions and terminology~\rom{\ref{CT:positivity:arithmetic:divisors}}\rom{)}.
Then, for $\phi \in \Rat(X)^{\times}_{\RR}$,
$\overline{D} + \widehat{(\phi)}_{\RR}$ is effective if and only if
$\overline{P} + \widehat{(\phi)}_{\RR}$ is effective.
In particular,
the natural inclusion maps
\[
\aH(X, n\overline{P}) \hookrightarrow \aH(X, n\overline{D}),\quad
\aH_{\QQ}(X, \overline{P}) \hookrightarrow \aH_{\QQ}(X, \overline{D})\quad\text{and}\quad
\aH_{\RR}(X, \overline{P}) \hookrightarrow \aH_{\RR}(X, \overline{D})
\]
are bijective for each $n \geq 0$.
\end{Proposition}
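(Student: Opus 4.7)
The plan is short. One direction is immediate: since $\overline{P} \in \Upsilon(\overline{D})$ gives $\overline{P} \leq \overline{D}$, effectivity of $\overline{P} + \widehat{(\phi)}_{\RR}$ forces effectivity of $\overline{D} + \widehat{(\phi)}_{\RR}$.

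For the nontrivial direction, the key observation is that every principal arithmetic divisor $\widehat{(\psi)}_{\RR}$ with $\psi \in \Rat(X)^{\times}_{\RR}$ is \emph{both} nef and anti-nef. Indeed, for $\psi \in \Rat(X)^{\times}$, condition (a) of the nefness definition reduces to the vanishing $\adeg(\rest{\widehat{(\psi)}}{C}) = 0$ for each reduced irreducible $1$-dimensional closed subscheme $C$ (a direct consequence of the product formula), and condition (b) is immediate since the first Chern current of a principal divisor vanishes. By $\RR$-linearity, both $\widehat{(\psi)}_{\RR}$ and its negative $-\widehat{(\psi)}_{\RR}$ remain nef. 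So assume $\overline{D} + \widehat{(\phi)}_{\RR} \geq (0,0)$ and set $\overline{M} := -\widehat{(\phi)}_{\RR}$. Then $\overline{M}$ is nef, and by hypothesis $\overline{M} \leq \overline{D}$, so $\overline{M} \in \Upsilon(\overline{D})$. The maximality of $\overline{P}$ yields $\overline{M} \leq \overline{P}$, which is exactly $\overline{P} + \widehat{(\phi)}_{\RR} \geq (0,0)$.

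For the bijectivity of the three inclusion maps, I would first observe that $n\overline{P}$ is the greatest element of $\Upsilon(n\overline{D})$ for each $n \geq 1$: it clearly lies there, and any $\overline{M}' \in \Upsilon(n\overline{D})$ yields $(1/n)\overline{M}' \in \Upsilon(\overline{D})$, hence $(1/n)\overline{M}' \leq \overline{P}$, so $\overline{M}' \leq n\overline{P}$. Applying the equivalence just proved to $n\overline{D}$ with an arbitrary $\phi \in \Rat(X)^{\times}$ gives bijectivity of $\aH(X, n\overline{P}) \hookrightarrow \aH(X, n\overline{D})$; the $\QQ$-case and $\RR$-case then follow from $\widehat{\Gamma}^{\times}_{\QQ}(X, \overline{D}) = \bigcup_{n\geq 1} \widehat{\Gamma}^{\times}(X, n\overline{D})^{1/n}$ (and its $\RR$-analogue), combined with the definition of $\widehat{\Gamma}^{\times}_{\RR}$.

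The only point that warrants a careful second look is the nefness of $\widehat{(\psi)}_{\RR}$ in the sense of this paper; but this is standard in the framework under consideration, and reduces by $\RR$-linearity of both the intersection form on curves and the $dd^c$-operator to the classical case $\psi \in \Rat(X)^{\times}$. Once that is granted, the proposition is essentially a tautology from the definition of ``greatest element of $\Upsilon(\overline{D})$''.
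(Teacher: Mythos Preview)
Your argument is correct and matches the paper's proof exactly: the paper also observes that $-\widehat{(\phi)}_{\RR} \in \Upsilon(\overline{D})$ (nefness of principal divisors being implicit) and concludes $-\widehat{(\phi)}_{\RR} \leq \overline{P}$ by maximality. One minor simplification: the ``in particular'' clause follows more directly than you indicate, since the $\RR$-case is literally the main equivalence (by definition of $\widehat{\Gamma}^{\times}_{\RR}$), the $\QQ$-case is its restriction to $\Rat(X)^{\times}_{\QQ}$, and the integral case for $n\overline{D}$ follows by applying the main equivalence to $\phi^{1/n} \in \Rat(X)^{\times}_{\RR}$ and scaling---so your detour through showing $n\overline{P}$ is greatest in $\Upsilon(n\overline{D})$ (while correct) is not needed.
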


\begin{proof}
We assume that $\overline{D} + \widehat{(\phi)}_{\RR}$ is effective.
Then $-\widehat{(\phi)}_{\RR} \in \Upsilon(\overline{D})$, and hence
$-\widehat{(\phi)}_{\RR} \leq \overline{P}$, that is,
$\overline{P} + \widehat{(\phi)}_{\RR}$ is effective.
The converse is obvious.
\end{proof}

As a corollary of the above proposition, we have the following.

\begin{Corollary}
\label{cor:thm:small:sec:D:P}
We assume that $d = 1$.
Let $\overline{D} = \overline{P} + \overline{N}$ be a Zariski decomposition of $\overline{D}$
\rom{(}Conventions and terminology~\rom{\ref{CT:positivity:arithmetic:divisors}}\rom{)}.
If $\overline{D}$ is big, then
the natural inclusion maps
\[
\aH(X, n\overline{P}) \hookrightarrow \aH(X, n\overline{D}),\quad
\aH_{\QQ}(X, \overline{P}) \hookrightarrow \aH_{\QQ}(X, \overline{D})\quad\text{and}\quad
\aH_{\RR}(X, \overline{P}) \hookrightarrow \aH_{\RR}(X, \overline{D})
\]
are bijective for each $n \geq 0$.
\end{Corollary}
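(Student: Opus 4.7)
The plan is to deduce Corollary~\ref{cor:thm:small:sec:D:P} directly from Proposition~\ref{prop:small:sec:D:P} by identifying $\overline{P}$ with the greatest element of $\Upsilon(\overline{D})$. Since $\overline{D}$ is big, \cite[Theorem~9.2.1]{MoArZariski} (the same existence result invoked in the proof of Theorem~\ref{thm:equal:cond:gen:Hodge}) produces a greatest element $\overline{P}^{\star}$ of $\Upsilon(\overline{D})$. Setting $\overline{N}^{\star}:=\overline{D}-\overline{P}^{\star}$, the observation recalled in Conventions and terminology~\ref{CT:positivity:arithmetic:divisors} tells us that $\overline{D}=\overline{P}^{\star}+\overline{N}^{\star}$ is itself a Zariski decomposition of $\overline{D}$.

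Next, I would invoke the uniqueness of the Zariski decomposition for big $\overline{D}$ recorded in the Remark after Lemma~\ref{lem:perpendicular:positive:negative} (ultimately \cite[Theorem~4.2.1]{MoArLinB}): the conditions that $\overline{P}$ be nef, $\overline{N}=\overline{D}-\overline{P}$ be effective, and $\avol(\overline{P})=\avol(\overline{D})$ determine $\overline{P}$ and $\overline{N}$ uniquely. Consequently $\overline{P}=\overline{P}^{\star}$, so $\overline{P}$ is the greatest element of $\Upsilon(\overline{D})$.

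With this identification in hand, Proposition~\ref{prop:small:sec:D:P} applies verbatim to $\overline{P}$ and yields the bijectivity of
\[
\aH(X, n\overline{P}) \hookrightarrow \aH(X, n\overline{D}),\quad
\aH_{\QQ}(X, \overline{P}) \hookrightarrow \aH_{\QQ}(X, \overline{D}),\quad
\aH_{\RR}(X, \overline{P}) \hookrightarrow \aH_{\RR}(X, \overline{D})
\]
for every $n \geq 0$, which is exactly the conclusion of the corollary. The only non-formal step in this argument is the appeal to uniqueness of the Zariski decomposition, which is the main conceptual input; everything else is a direct citation or a reformulation of already established statements.
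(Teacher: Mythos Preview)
Your approach is the same in spirit as the paper's---identify $\overline{P}$ with the greatest element of $\Upsilon(\overline{D})$ and then quote Proposition~\ref{prop:small:sec:D:P}---but you have overlooked a regularity hypothesis. Corollary~\ref{cor:thm:small:sec:D:P} is stated only under $d=1$; the ambient $X$ is normal but not assumed regular. Both \cite[Theorem~9.2.1]{MoArZariski} (existence of the greatest element of $\Upsilon(\overline{D})$) and the use of \cite[Theorem~4.2.1]{MoArLinB} to pin down that greatest element are results for \emph{regular} arithmetic surfaces. Indeed, in the proof of Theorem~\ref{thm:equal:cond:gen:Hodge} that you cite, the reduction ``we may assume that $X$ is regular'' is carried out \emph{before} \cite[Theorem~9.2.1]{MoArZariski} is invoked, and the other places in the paper where the greatest element appears (Corollaries~\ref{cor:characterization:Zariski:decomp} and the subsequent one) all carry an explicit regularity assumption. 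So as written, your step producing $\overline{P}^{\star}$ on $X$ is not justified.

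The paper fixes exactly this gap by passing to a desingularization $\mu:X'\to X$. The pullback $\mu^*(\overline{D})=\mu^*(\overline{P})+\mu^*(\overline{N})$ is again a Zariski decomposition, now on the regular surface $X'$; there \cite[Theorem~4.2.1]{MoArLinB} identifies $\mu^*(\overline{P})$ with the greatest element of $\Upsilon(\mu^*(\overline{D}))$, and Proposition~\ref{prop:small:sec:D:P} gives the bijections on $X'$. A commutative-diagram argument, using that $\mu$ induces bijections $\aH(X,n\overline{L})\to\aH(X',n\mu^*(\overline{L}))$ and $\aH_{\KK}(X,\overline{L})\to\aH_{\KK}(X',\mu^*(\overline{L}))$ for $\overline{L}\in\{\overline{P},\overline{D}\}$, then descends the conclusion to $X$. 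If you insert this desingularization step (or add the hypothesis that $X$ is regular), your argument goes through; without it, it does not.
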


\begin{proof}
Let $\mu : X' \to X$ be a desingularization of $X$ (cf. \cite{Lip}).
Then 
\[
\mu^*(\overline{D}) = \mu^*(\overline{P}) + \mu^*(\overline{N})
\]
is a Zariski decomposition of $\mu^*(\overline{D})$.
Thus, by \cite[Theorem~4.2.1]{MoArLinB}, $\mu^*(\overline{P})$ gives the greatest element of
$\Upsilon(\mu^*(\overline{D}))$. Therefore, by Proposition~\ref{prop:small:sec:D:P},
\[
\aH(X', n\mu^*(\overline{P})) = \aH(X', n\mu^*(\overline{D}))\quad\text{and}\quad
\aH_{\KK}(X', \mu^*(\overline{P})) = \aH_{\KK}(X', \mu^*(\overline{D}))
\]
for each $n \geq 0$, where $\KK$ is either $\QQ$ or $\RR$.
Let us consider the following commutative diagrams:
\[
\begin{CD}
\aH(X, n\overline{P}) @>>> \aH(X', n\mu^*(\overline{P})) \\
@VVV @| \\
\aH(X, n\overline{D}) @>>> \aH(X', n\mu^*(\overline{D}))
\end{CD}
\qquad
\begin{CD}
\aH_{\KK}(X, \overline{P}) @>>> \aH_{\KK}(X', \mu^*(\overline{P})) \\
@VVV @| \\
\aH_{\KK}(X, \overline{D}) @>>> \aH_{\KK}(X', \mu^*(\overline{D}))
\end{CD}
\]
Note that each horizontal arrow is bijective.
Thus the assertions follows.
\end{proof}

\bigskip

\end{document}